\theoremstyle{theorem}
\newtheorem{theorem}{Theorem}[section]
\newtheorem{lemma}[theorem]{Lemma}
\newtheorem{corollary}[theorem]{Corollary}
\newtheorem{conjecture}[theorem]{Conjecture}
\newtheorem{claim}[theorem]{Claim}
\newtheorem{prop}[theorem]{Proposition}
\theoremstyle{remark}
\newtheorem{remark}{Remark}[section]
\newcommand{\op}{\operatorname}
\newcommand{\abs}[1]{\left\lvert #1 \right\rvert}
\newcommand{\br}[1]{\left\{ #1 \right\}}
\newcommand{\setdef}[2]{\br{#1 \colon #2}} 
\newcommand{\pr}[1]{\left( #1 \right)} 
\newcommand{\floor}[1]{\left\lfloor #1 \right\rfloor}
\newcommand{\ceil}[1]{\left\lceil #1 \right\rceil}
\newcommand{\irange}[2]{\left[#1,#2\right]} 
\newcommand{\Pb}[1]{\mathbb{P}{\pr{#1}}}
\newcommand{\D}[1]{\op{Var}\!\pr{#1}}
\DeclareMathOperator*{\argmax}{argmax}
\begin{document}
\date{}

\title{Maximum induced trees and forests of bounded degree\\ in random graphs}

\author{Margarita Akhmejanova, \,
Vladislav Kozhevnikov\footnote{Moscow Institute of Physics and Technology (National Research University), Department of Discrete Mathematics, Moscow, Russian Federation}, \, Maksim Zhukovskii\footnote{The University of Sheffield, Department of Computer Science, Sheffield S1 4DP, UK.\newline Email: m.zhukovskii@sheffield.ac.uk.}}
\maketitle



\textbf{Abstract}. Asymptotic behaviour of maximum sizes of induced trees and forests has been studied extensively in last decades, though the overall picture is far from being complete. In this paper, we close several significant gaps: 1) We prove $2$-point concentration of the maximum sizes of an induced forest and an induced tree with maximum degree at most $\Delta$ in dense binomial random graphs $G(n,p)$ with constant probability $p$.  2) We show concentration in an explicit interval of size $o(1/p)$ for the maximum size of an induced forest with maximum degree at most $\Delta$ for $1/n\ll p=o(1)$. Our proofs rely on both the second moment approach, with the probabilistic part involving Talagrand's concentration inequality and the analytical part involving saddle-point analysis, and new results on enumeration of labelled trees and forests that might be of their own interest. 
\vspace{0.3cm}

{\bf Keywords:} induced subgraph, tree, forest, bounded degree, random graph, 2-point concentration, counting labelled forests.
\vspace{0.5cm}

\section{Introduction}
\label{sc:intro}


All graphs considered in the paper are simple, i.e. undirected, without loops and multiple edges. Let $G$ be a graph. Then $V(G)$ denotes the set of vertices of $G$ and $E(G)$ denotes the set of edges of $G$. A subgraph $H$ of $G$ is called \emph{induced} if, for any two vertices $u,v$ in $H$, $u$ and $v$ are adjacent in $H$ if and only if they are adjacent in $G$. For a subset of vertices $S\subset V(G)$, $G[S]$ denotes the induced subgraph $H$ of $G$ such that $V(H)=S$. We call $|V(G)|$ the \textit{size} of $G$. For a graph $G$, we denote the maximum size of its 
\begin{itemize}
\item induced tree by $\mathrm{T}(G)$, 
\item induced forest by $\mathrm{F}(G)$,
\item induced tree with maximum degree at most $\Delta$ by $\mathrm{T}_{\Delta}(G)$, and
\item induced forest with maximum degree at most $\Delta$  by $\mathrm{F}_{\Delta}(G)$.
\end{itemize}


Let $G(n,p)$ denote the binomial random graph on $[n]:=\{1,\ldots,n\}$, in which each pair of vertices is connected by an edge with probability $p$ independently from all the other pairs of vertices.
For a graph property, we say that it holds \textit{with high probability} (\textit{whp}, for brevity) if  $G(n,p)$ has this property with probability approaching $1$ as $n\to\infty$. 

In this paper, we study asymptotic behaviour of $\mathbf{T}_{\Delta}:=\mathrm{T}_{\Delta}(G(n,p))$ and $\mathbf{F}_{\Delta}:=\mathrm{F}_{\Delta}(G(n,p))$ in different regimes: $p=\mathrm{const}\in(0,1)$ and $1/n\ll p=o(1)$.

Throughout the paper, we use standard asymptotic notations $o,O,\Theta,\Omega$ for sequences and functions (see, e.g., \cite[Chapter A.2]{Flajolet2009} for definitions).\\

The study of maximum sizes of induced trees in $G(n, p)$ has a deep history. It was initiated by Erd\H{o}s and Palka \cite{ERDOS1983145}. They showed that if $p=\op{const}$, then whp 
$$
 \mathbf{T}:=\mathrm{T}(G(n,p))=(2+o(1))\log_{1/(1-p)}(np)
$$ 
and posed the problem of determining the size of a largest induced tree in sparse random graphs. More precisely, they conjectured that for every $C>1$ whp $G(n, C/n)$ contains an induced tree of linear size. The conjecture was solved by Fernandez de la Vega~\cite{delaVega1986}, Frieze and Jackson~\cite{FRIEZE1987181}, Ku{\v{c}}era and R\"{o}dl \cite{article}, and \L{}uczak and Palka \cite{LUCZAK1988257}. In \cite{Palka}, Palka and Ruci{\'{n}}ski proved bounds for $\mathbf{T}$ when $p=\frac{C \ln n }{n}$ and $C>e$. Later~\cite{delaVega1996} Fernandez de la Vega improved the result for $p=C/n$: for large constant $C$ and any fixed $\varepsilon>0$, whp
$$
\frac{2 n(\ln C-\ln \ln C-1)}{C}\le \mathbf{T}\le \frac{(1+\varepsilon)2n\ln C}{C}.
$$
His result can be generalised to all larger $p$ and so it gives tight asymptotics of $\mathbf{T}$ when $1/n\ll p\leq 1-\varepsilon$. Dutta and Subramanian \cite{Dutta2018} improved the error term for $n^{-1 / 2}(\ln n)^2\leq p\leq 1-\varepsilon$: they proved that whp  
$$
\mathbf{T}=2\left(\log_{1/(1-p)} (n p)\right)+O(1/p).
$$
Dragani{\'{c}}~\cite{Drag} proved  that, for any given tree $T$ with bounded maximum degree and of size at most $(2-\varepsilon)\log_{1/(1-p)}(np)$, whp $G(n,p)$ contains an induced copy of $T$  when $n^{-1/2}\ln^{10/9} n\leq  p\leq 0.99$.  Finally, for the case $p=\op{const}$, Kamaldinov, Skorkin, and the third author~\cite{Kamaldinov2021} proved 2-point concentration of $\mathbf{T}$.

The asymptotic behaviour of $\mathbf{F}:=\mathrm{F}(G(n,p))$ is also fairly well understood and it is similar to that of $\mathbf{T}$. In~\cite{Krivoshapko2021}, Krivoshapko and the third author proved 2-point concentration of $\mathbf{F}$ for all constant $p\in(0,1)$. In~\cite{AkhK}, the first two authors proved that $\mathbf{F}$ is concentrated in an interval of size $o(1/p)$ for all $1/n\ll p=o(1)$: for any fixed $\varepsilon>0$ there exists $C=C(\varepsilon)$ such that, for all $C/n\leq p\leq 1-\varepsilon$, whp
$$
 \lfloor 2\log_{1/(1-p)}(enp(1-\varepsilon))+3\rfloor\leq\mathbf{F}\leq\lceil 2\log_{1/(1-p)}(enp(1+\varepsilon))+3\rceil.
$$
Glock~\cite{Glock2021} proved  that, for any given forest $F$ with bounded maximum degree and of size at most $(2-\varepsilon)\log_{1/(1-p)}(np)$, whp $G(n,p)$ contains an induced copy of $F$  when $p\gg n^{-1/2}\log n$. Some partial results have been obtained for forests with bounded maximum degrees: the asymptotic behaviour of maximum sizes of an induced matching~\cite{Cooley2021}, an induced forest whose components are all isomorphic to an arbitrary fixed tree \cite{Dragani2022}, and an induced forest with maximum degree at most 2~\cite{Dragani2022}.

It is harder to analyse trees and forests of bounded degrees due to the lack of counting results. Namely, the computation of first moments of the respective random variables requires an enumeration of labelled trees and weighted forests with bounded maximum degrees. A straightforward estimation of second moments leads to an estimation of the number of labelled trees and forests containing a fixed induced subforest. In this paper, we obtain the required enumeration results and use them to prove 2-point concentration of $\mathbf{T}_{\Delta}$ and $\mathbf{F}_{\Delta}$ when $p$ is constant, as well as asymptotics and concentration in an interval of size $o(1/p)$ of $\mathbf{F}_{\Delta}$ for all $1/n\ll p=o(1)$.\\

The following function plays a crucial role in our analysis. For every integer $\Delta\geq 2$, we define
\begin{equation}
\gamma_{\Delta}(x):=\sum_{k=0}^{\Delta-1}\frac{x^k}{k!},\quad x>0.
\label{eq:f_Delta_definition}
\end{equation}
For $\Delta\geq 3$, we let $\alpha_{\Delta}\in(1,\infty)$ be the unique solution of the equation
\begin{equation}
 \frac{x\gamma_{\Delta-1}(x)}{\gamma_{\Delta}(x)}=1
 \label{eq:alpha_Delta_definition}
\end{equation}
and $a_{\Delta}:=\gamma_{\Delta-1}(\alpha_{\Delta})$. We note that the solution indeed exists since $x\gamma_{\Delta-1}(x)-\gamma_{\Delta}(x)$ strictly increases on $[1,\infty)$, tends to infinity as $x\to\infty$, and equals $\gamma_{\Delta-1}(1)-\gamma_{\Delta}(1)=-1/(\Delta-1)!<0$ at $x=1$. In particular, $\alpha_3=\sqrt{2}$ and $a_3=\gamma_2(\alpha_3)=1+\sqrt{2}$. In Section~\ref{sc:auxiliary}, we prove the following important property of this sequence:
\begin{lemma}
The sequence $a_{\Delta},$ $\Delta\geq 3$, is non-decreasing and $\lim_{\Delta\to\infty}a_{\Delta}=e$.
\label{lm:a_Delta}
\end{lemma}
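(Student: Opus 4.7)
The plan is to identify $a_\Delta$ with the minimum value of a simple auxiliary function, which makes both claims almost immediate.

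The key observation is that $\gamma_\Delta'(x)=\gamma_{\Delta-1}(x)$, so for $h_\Delta(x):=\gamma_\Delta(x)/x$ we have
\begin{equation*}
h_\Delta'(x)=\frac{x\gamma_{\Delta-1}(x)-\gamma_\Delta(x)}{x^2},
\end{equation*}
and the equation~\eqref{eq:alpha_Delta_definition} defining $\alpha_\Delta$ is exactly $h_\Delta'(\alpha_\Delta)=0$. Using the information already recorded in the excerpt (the numerator is strictly increasing on $[1,\infty)$, negative at $x=1$, tending to $+\infty$) together with the observation that $(x-1)\gamma_{\Delta-1}(x)-x^{\Delta-1}/(\Delta-1)!<0$ on $(0,1)$, I would conclude that $h_\Delta$ is strictly decreasing on $(0,\alpha_\Delta]$ and strictly increasing on $[\alpha_\Delta,\infty)$. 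Substituting $\alpha_\Delta$ into \eqref{eq:alpha_Delta_definition} gives $\gamma_\Delta(\alpha_\Delta)=\alpha_\Delta\gamma_{\Delta-1}(\alpha_\Delta)$, hence $h_\Delta(\alpha_\Delta)=\gamma_{\Delta-1}(\alpha_\Delta)=a_\Delta$, so
\begin{equation*}
a_\Delta=\min_{x>0}\frac{\gamma_\Delta(x)}{x}.
\end{equation*}

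Monotonicity is then immediate: $\gamma_\Delta(x)$ is non-decreasing in $\Delta$ for every fixed $x>0$ (partial sums of a positive series), hence so is $\gamma_\Delta(x)/x$ pointwise, and therefore so is its pointwise infimum $a_\Delta$. The upper bound for the limit follows from $\gamma_\Delta(x)<e^x$, which gives $a_\Delta<\min_{x>0}e^x/x=e$.

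For the matching lower bound I would first show that $\alpha_\Delta\to 1$. Rewriting the defining equation as $(\alpha_\Delta-1)\gamma_{\Delta-1}(\alpha_\Delta)=\alpha_\Delta^{\Delta-1}/(\Delta-1)!$ and keeping only the last term of $\gamma_{\Delta-1}$ yields $(\alpha_\Delta-1)\alpha_\Delta^{\Delta-2}/(\Delta-2)!\leq \alpha_\Delta^{\Delta-1}/(\Delta-1)!$, which simplifies to $1<\alpha_\Delta\leq 1+1/(\Delta-2)$. Since $\gamma_{\Delta-1}\to e^x$ uniformly on $[0,2]$, we obtain $a_\Delta=\gamma_{\Delta-1}(\alpha_\Delta)\to e^{1}=e$.

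The substantive step is spotting the variational identity $a_\Delta=\min_x\gamma_\Delta(x)/x$; once this is in hand, monotonicity is a one-liner and the only mildly delicate point is the quantitative bound on $\alpha_\Delta$ needed to pass to the limit, which is handled by a single-term lower bound on $\gamma_{\Delta-1}(\alpha_\Delta)$.
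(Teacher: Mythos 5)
Your proof is correct, and the monotonicity argument is genuinely different from the paper's. The paper deduces that $a_\Delta$ is non-decreasing from the asymptotic tree-count formula $t_\Delta(n)=(1+o(1))\,\alpha_\Delta\sqrt{\gamma_\Delta(\alpha_\Delta)/\gamma_{\Delta-2}(\alpha_\Delta)}\,(a_\Delta/e)^n n^{n-2}$ of Theorem~\ref{lemma_tree_of_bounded_degree}, combined with the trivial monotonicity of $t_\Delta(n)$ in $\Delta$: if $a_\Delta$ dropped, the exponential rate would drop and contradict $t_\Delta(n)\le t_{\Delta+1}(n)$ for large $n$. Your route instead identifies $a_\Delta=\min_{x>0}\gamma_\Delta(x)/x$ directly from the saddle-point structure (noting that the defining equation~\eqref{eq:alpha_Delta_definition} is exactly $(\gamma_\Delta(x)/x)'=0$, and that $x\gamma_{\Delta-1}(x)-\gamma_\Delta(x)=-1+\sum_{j\ge 2}\frac{j-1}{j!}x^j$ changes sign once), after which monotonicity in $\Delta$ is immediate by termwise monotonicity of $\gamma_\Delta$. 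This is more elementary and self-contained, since it does not rely on the much heavier Theorem~\ref{lemma_tree_of_bounded_degree}; it also gives the clean upper bound $a_\Delta<\min_{x>0}e^x/x=e$ for free. The limit computation in both proofs is essentially identical: both derive $1<\alpha_\Delta\le(\Delta-1)/(\Delta-2)$ by bounding $\gamma_{\Delta-1}(\alpha_\Delta)$ from below by its top term, then pass to the limit (you via uniform convergence of the exponential partial sums, the paper via Tannery's theorem — the same fact).
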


The rest of the introduction is organised as follows. In Section~\ref{sc:intro_dense} we give a brief overview of known results on maximum sizes of induced subgraphs in dense binomial random graphs and present our results on 2-point concentration of $\mathbf{T}_{\Delta}$ and $\mathbf{F}_{\Delta}$. In Section~\ref{sc:intro_sparse} we consider sparse settings $p=o(1)$, review known results, and present our result on asymptotics and concentration of $\mathbf{F}_{\Delta}$. Finally, Section~\ref{sc:intro_enumeration} is devoted to our proof methods and, in particular, to new enumeration results that can be on their own interest.


\subsection{Dense random graphs}
\label{sc:intro_dense}

The first result in the context of maximum sizes of induced subgraph in $G(n,p)$ was devoted to the maximum size of an empty graph (with no edges) --- i.e. to the {\it independence number} $\alpha(G(n,p))$. Matula~\cite{M1,M2,M3}, Grimmett and McDiarmid~\cite{GM}, and Bollob\'{a}s and Erd\H{o}s~\cite{Bollobas1976}
 proved that, for every constant $p\in(0,1)$, there exists a function $f(n)\sim 2\log_{1/(1-p)}(np)$ such that whp
$$
f(n)\leq\alpha(G(n,p))\leq f(n)+1.
$$
Since then, a series of results have demonstrated that this {\it 2-point concentration} in {\it dense} random graph $G(n,p=\mathrm{const})$ also holds for induced subgraphs from many other families: in particular, for the maximum size of
\begin{itemize}
\item an induced subgraph of bounded maximum degree \cite{Fountoulakis2010},
\item an induced subgraph of bounded average degree \cite{Fountoulakis2014},
\item an induced subgraph of given average degree \cite{Kamaldinov2021},
\item induced path and cycle \cite{Dutta2018},
\item an induced tree \cite{Kamaldinov2021}, and
\item an induced forest \cite{Krivoshapko2021}.
\end{itemize}
 
In this paper, we prove 2-point concentration for trees of bounded degree and forests of bounded degree.
\begin{theorem}\label{th:concentration_max_ind_tree_bounded_deg}
 Let $p\in(0,1)$ be constant and set $q:=1/(1-p)$. Then, for any constant $\Delta\ge3$, whp
\begin{enumerate}
\item $\mathbf{T}_{\Delta}$
equals either $\ceil{2\log_{q}\pr{a_{\Delta}np}+1}$ or $\ceil{2\log_{q}\pr{a_{\Delta}np}+2}$;
\item $\mathbf{F}_{\Delta}$
equals either $\ceil{2\log_{q}\pr{a_{\Delta}np}+1}$ or $\ceil{2\log_{q}\pr{a_{\Delta}np}+2}$.
\end{enumerate}
\end{theorem}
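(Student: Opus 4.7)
The plan is a standard first-and-second-moment analysis, leveraging the enumeration results announced in Section~\ref{sc:intro_enumeration} as the key new inputs. Let $X_k$ (resp.~$Y_k$) denote the number of $k$-subsets $S\subset[n]$ such that $G(n,p)[S]$ is a tree (resp.~forest) of maximum degree at most $\Delta$. Writing $L:=\log_q(a_\Delta np)$, the goal is to show $X_k=Y_k=0$ whp for $k>\ceil{2L+2}$, and $X_{k_0},Y_{k_0}>0$ whp at $k_0:=\ceil{2L+1}$.

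The first key input is the asymptotic
\[
  t_\Delta(k)=(k-2)!\,[x^{k-2}]\gamma_\Delta(x)^k\;\sim\;C_\Delta\,(k-2)!\,\frac{a_\Delta^k}{\sqrt{k}},
\]
obtained from the classical degree-sequence identity by a saddle-point analysis at $z=\alpha_\Delta$, using $\gamma_\Delta(\alpha_\Delta)=\alpha_\Delta a_\Delta$ from~\eqref{eq:alpha_Delta_definition}. An analogous enumeration for labelled $\Delta$-bounded forests (whose EGF is $\exp\hat T_\Delta$) gives the same exponential rate $a_\Delta^k$ with a different polynomial prefactor. Substituting into
\[
  \mathbb{E}[X_k]=\binom{n}{k}\,t_\Delta(k)\,p^{k-1}(1-p)^{\binom{k-1}{2}}
\]
and applying Stirling yields $\log\mathbb{E}[X_k]=k\log(a_\Delta np)-\binom{k-1}{2}\log q+O(\log k)$, and similarly for $\mathbb{E}[Y_k]$. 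Setting $k=2L+c$, one checks that $\mathbb{E}[X_k]$ is polynomially large in $n$ for $c\le 2$ and tends to $0$ for $c\ge 3$ (up to a $\log\log n$ correction from the $-\tfrac{5}{2}\log k$ term inside the error), so Markov's inequality at $k=\ceil{2L+2}+1$ yields the upper bound $\mathbf{T}_\Delta,\mathbf{F}_\Delta\le\ceil{2L+2}$ whp.

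For the lower bound at $k_0=\ceil{2L+1}$ the expectation is polynomially large, and one invokes the second moment. Splitting $\mathbb{E}[X_{k_0}^2]$ over the overlap $\ell\in[0,k_0]$ of two $k_0$-subsets requires counting pairs of $\Delta$-bounded trees on $[k_0]$-sets whose common induced subgraph on the $\ell$ common vertices is a prescribed forest; this is the second enumeration result announced in Section~\ref{sc:intro_enumeration}, again proved by saddle-point analysis applied to products of truncated exponentials $\gamma_\Delta$. Paley--Zygmund then yields $\mathbb{P}(X_{k_0}>0)=\Omega(1)$, and Talagrand's inequality, applied to $X_{k_0}$ as a certifiable function of the edges of $G(n,p)$ in the spirit of~\cite{Kamaldinov2021,Krivoshapko2021}, upgrades this to $1-o(1)$. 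The argument for $Y_{k_0}$ is parallel.

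The main obstacle is controlling the second moment uniformly in the overlap size $\ell$: for small $\ell$ a sharp saddle-point bound must show $\mathbb{E}[X_{k_0}^2]/\mathbb{E}[X_{k_0}]^2$ stays bounded, while for $\ell$ close to $k_0$ one exploits the degree bound $\Delta$ to limit the number of pair-completions (in particular, two $\Delta$-bounded trees that overlap in nearly all vertices must agree almost entirely, a rigidity the unrestricted case of~\cite{Kamaldinov2021,Krivoshapko2021} does not enjoy). Once the enumeration machinery is secured, the probabilistic ingredients (Markov, Paley--Zygmund, Talagrand) follow the standard template.
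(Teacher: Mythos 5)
Your high-level skeleton — compute $\mathbb{E}Y_k$ via the enumeration of bounded-degree trees, apply Markov at $k=\lceil 2L+3\rceil$, then a second moment over overlap sizes $\ell$ for the lower bound, with degree rigidity taming the near-total overlaps — is the same as the paper's, and the rigidity observation for large $\ell$ (two $\Delta$-bounded trees sharing almost all vertices severely constrain each other) is exactly Claim~\ref{cl:cl3}. However, two things in the probabilistic finish are off. First, the second-moment analysis in the paper actually gives $\mathrm{Var}(Y_k)/(\mathbb{E}Y_k)^2=o(1)$ at $k=\lfloor 2L+3-\varepsilon\rfloor$, so Chebyshev alone delivers $\mathbb{P}(Y_k=0)=o(1)$; there is no need for Paley--Zygmund plus a bootstrapping step. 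Second, and more importantly, the Talagrand step as you describe it would not work: you propose to apply Talagrand to the count $X_{k_0}$, but the number of induced $\Delta$-bounded $k_0$-vertex trees is not $1$-Lipschitz in the edge indicators (flipping one edge can create or destroy many such trees), so it is not an admissible Talagrand random variable. The paper does use Talagrand, but only in the sparse regime (Section~\ref{sc:small_p_proof_Talagrand}), and it applies it to $\mathbf{F}_\Delta$ itself, which is $1$-Lipschitz and $\lceil r\rceil$-certifiable; for constant $p$ no Talagrand is needed.

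Two smaller attribution issues, not fatal but worth noting. You describe the asymptotics of $t_\Delta(k)$ as coming from a saddle-point analysis, but part~1 of Theorem~\ref{lemma_tree_of_bounded_degree} is proved via a local limit theorem applied to a sum of i.i.d.\ bounded random variables with generating function $\gamma_\Delta(\alpha_\Delta x)/\gamma_\Delta(\alpha_\Delta)$; the saddle-point machinery appears only in part~2 (the weighted sum over rooted forests), which is used for the sparse case. Likewise, the fixed-induced-subforest enumeration you invoke is not a saddle-point result but the Pr\"ufer-sequence count of Lemma~\ref{lemma_labelled_trees_with_a_given_subgraph} (without a degree bound), combined with the crude inequality $t_\Delta(k,F)\le t(k,F)$ and the structural Claim~\ref{cl:cl3}. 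Finally, the forest part of the theorem does not require a parallel second moment: the paper observes $\mathbf{T}_\Delta\le\mathbf{F}_\Delta$ to inherit the lower bound from trees, and bounds $\mathbb{E}Z_k=\Theta(k\,\mathbb{E}Y_k)$ via Claim~\ref{cl:rooted_forests_bounds} so that the same Markov step gives the upper bound — considerably less work than re-running the overlap analysis for forests.
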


We actually prove a slightly stronger lower bound: for every fixed $\varepsilon>0$, whp $\mathbf{T}_{\Delta}\geq\lceil2\log_{1/(1-p)}(a_{\Delta}np)+3-\varepsilon\rceil$ (see Section~\ref{sub_indiced_subtrees}). This, together with Lemma~\ref{lm:a_Delta} and with the upper bound $\mathbf{F} \le \ceil{2\log_{1/(1-p)}\pr{enp}+2}$ whp  (see~\cite{Krivoshapko2021}\footnote{In~\cite{Krivoshapko2021}, a slightly weaker inequality  $\mathbf{F} \le \lfloor 2\log_{q}\pr{enp}+3+\varepsilon\rfloor$ is proven. Neverthess, exactly the same argument gives the stronger bound as well. Alternatively, computations from Section~\ref{sc:const_p} can be similarly applied to the number of forests without degree restrictions giving the desired bound.}) immediately imply that, for any function $\Delta(n)$ approaching infinity, both $\mathbf{T}_{\Delta(n)}$ and $\mathbf{F}_{\Delta(n)}$ are also concentrated in at most $2$ values.

\begin{corollary}\label{corollary}
Let $p\in(0,1)$ be constant and set $q:=1/(1-p)$. Then, for any $\Delta(n)\to\infty$, whp
$$
\ceil{2\log_{q}\pr{enp}+1} \le \mathbf{T}_{\Delta(n)} \le \mathbf{F}_{\Delta(n)} \le \ceil{2\log_{q}\pr{enp}+2}.
$$
\end{corollary}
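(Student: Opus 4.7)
The plan is to derive each of the three inequalities from an ingredient already established, reducing the corollary to a bookkeeping argument.

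For the upper bound, the key point is the trivial monotonicity $\mathbf{F}_{\Delta(n)} \le \mathbf{F}$: any induced forest whose maximum degree is bounded is, in particular, an induced forest. Combining this with the footnoted result from \cite{Krivoshapko2021} that whp $\mathbf{F} \le \lceil 2\log_q(enp)+2\rceil$ immediately gives the desired upper bound on $\mathbf{F}_{\Delta(n)}$. The middle inequality $\mathbf{T}_{\Delta(n)} \le \mathbf{F}_{\Delta(n)}$ is immediate since every tree is a forest with the same maximum degree.

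The lower bound is the main point. The natural tool is the strengthened form of Theorem \ref{th:concentration_max_ind_tree_bounded_deg} mentioned just after its statement, namely that for any fixed constant $\Delta\ge 3$ and any fixed $\varepsilon>0$, whp $\mathbf{T}_{\Delta}\ge \lceil 2\log_q(a_{\Delta}np)+3-\varepsilon\rceil$. This cannot be applied directly with $\Delta=\Delta(n)$ since $\Delta(n)$ depends on $n$, so instead I would fix a large constant $\Delta_0$ and use the monotonicity $\mathbf{T}_{\Delta(n)}\ge \mathbf{T}_{\Delta_0}$, which holds whenever $\Delta(n)\ge \Delta_0$ and hence, since $\Delta(n)\to\infty$, for all sufficiently large $n$. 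Fix $\varepsilon\in(0,2)$, say $\varepsilon=1$. By Lemma \ref{lm:a_Delta}, $a_{\Delta_0}\to e$, so I can choose $\Delta_0$ large enough that $2\log_q(a_{\Delta_0}/e)\ge \varepsilon-2$. Then the whp bound $\mathbf{T}_{\Delta_0}\ge \lceil 2\log_q(a_{\Delta_0}np)+3-\varepsilon\rceil$ rewrites as
\[
\mathbf{T}_{\Delta_0}\ge \left\lceil 2\log_q(enp)+1+\bigl(2+2\log_q(a_{\Delta_0}/e)-\varepsilon\bigr)\right\rceil \ge \lceil 2\log_q(enp)+1\rceil,
\]
using monotonicity of the ceiling, and this transfers to $\mathbf{T}_{\Delta(n)}$ for all large $n$.

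I do not foresee any genuine obstacle; the only delicate point is the $n$-dependence of $\Delta(n)$, which prevents a direct substitution into Theorem \ref{th:concentration_max_ind_tree_bounded_deg}, and this is bypassed cleanly by first reducing to a constant value $\Delta_0$ via the two monotonicities in $\Delta$ and then letting $\Delta_0$ be chosen so large that Lemma \ref{lm:a_Delta} closes the gap between $a_{\Delta_0}$ and $e$ inside the ceiling.
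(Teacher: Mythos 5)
Your proposal is correct and follows essentially the same route the paper intends: the upper bound via $\mathbf{F}_{\Delta(n)}\le\mathbf{F}$ and the cited bound from \cite{Krivoshapko2021}, and the lower bound via the strengthened constant-$\Delta$ bound $\mathbf{T}_{\Delta_0}\ge\lceil 2\log_q(a_{\Delta_0}np)+3-\varepsilon\rceil$ together with the monotonicity $\mathbf{T}_{\Delta(n)}\ge\mathbf{T}_{\Delta_0}$ (valid for large $n$) and Lemma~\ref{lm:a_Delta} to push $a_{\Delta_0}$ close enough to $e$. The paper leaves the bookkeeping implicit with ``immediately imply''; you have spelled out exactly that bookkeeping, and the computation $2\log_q(a_{\Delta_0}/e)\ge\varepsilon-2$ with a suitably large constant $\Delta_0$ is the right way to close the gap inside the ceiling.
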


We would also like to mention a striking result by Bollob{\'{a}}s and Thomason proving asymptotics of the maximum size of an induced subgraph with {\bf any} given hereditary property in dense random graphs \cite{Bollobs2000} as well as the fact that 2-point concentration fails in certain cases when typical maximum sizes are large~\cite{Balogh}.  

\subsection{Sparse random graphs}
\label{sc:intro_sparse}

In {\it sparse} random graphs $G(n,p=o(1))$ the maximum size of an induced subgraph from some of the above mentioned families is still concentrated around the value $2\log_{1/(1-q)}(np)$, although not necessarily in two points. In particular, Frieze~\cite{Frieze1990} proved that there exists a function $f(n)\sim2\log_{1/(1-p)}(np)$ such that for any $\varepsilon>0$, if $C/n<p=o(1)$, where $C=C(\varepsilon)$ is a sufficiently large constant, then whp
$$
|\alpha(G(n, p))-f(n)| \leq \frac{\varepsilon}{p}.
$$
In particular, the concentration interval has length $o(1/p)$ when $1/n\ll p=o(1)$. Noteworthy, Bohman and Hofstad~\cite{BH} proved two point concentration of $\alpha(G(n,p))$ for $p\geq n^{-2/3+\varepsilon}$ and noted that Sah and Sawney observed that this bound on $p$ is roughly best possible: if $1/n\ll p\ll(\ln n/n)^{2/3}$, then $\alpha(G(n,p))$ is not concentrated in any interval of a bounded size with probability $\Omega(1)$.

Furthermore, asymptotics of 
 the maximum sizes of 
\begin{itemize}
\item an induced tree \cite{Dutta2018,delaVega1996} (in \cite{delaVega1996} proof is provided for $p=C/n$ but works for larger $p$ as well),
\item an induced matching~\cite{Cooley2021},
\item induced path and cycle \cite{Dutta2018,Dragani2022},
\item an induced forest~\cite{AkhK},
\item an induced forest whose components are all isomorphic to an arbitrary fixed tree \cite{Dragani2022}
\end{itemize}
are known for $1/n\ll p=o(1)$, though, for the entire range of $p$, a concentration interval of length $o(1/p)$ is proven only for $\alpha(G(n,p))$ and $\mathbf{F}$.


It is reasonable to expect concentration in an interval of size $o(1/p)$ at least for most of the above mentioned graph classes, for which asymptotics are known. In the present paper we prove this for forests of bounded degree:

\begin{theorem}\label{th:concentration_max_ind_rooted_forests_bounded_degree}
 For any constant $\Delta\ge3$ and any $\varepsilon>0$, there exists $C$ such that, if $C/n\leq p\leq 1-\varepsilon$, then whp
$$
\floor{2\log_q(a_{\Delta}np(1-\varepsilon))+3}\le \mathbf{F}_{\Delta}\le \ceil{2\log_q(a_{\Delta}np(1+\varepsilon))+3},
$$
where $q=1/(1-p)$.
\end{theorem}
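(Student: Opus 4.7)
My plan is to attack both inequalities by the moment method applied to
\[
X_k := \abs{\br{S \subset [n] \colon |S|=k,\ G(n,p)[S]\ \text{is a forest of maximum degree}\ \le \Delta}}.
\]
The upper bound will follow from Markov's inequality on $X_{k^*}$ with $k^* = \ceil{2\log_q(a_\Delta np(1+\varepsilon)) + 3}$; the lower bound will come from a second moment argument on $X_{k_*}$ with $k_* = \floor{2\log_q(a_\Delta np(1-\varepsilon)) + 3}$, the resulting positive-probability statement upgraded to \emph{whp} via Talagrand's inequality applied to the certifiable functional $\mathbf{F}_\Delta$.

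For the first moment, I would write
\[
\E{X_k} = \binom{n}{k} \sum_{c=1}^{k} f_\Delta(k,c)\, p^{k-c}(1-p)^{\binom{k}{2}-(k-c)},
\]
where $f_\Delta(k,c)$ denotes the number of labelled forests on $[k]$ with exactly $c$ components and every vertex of degree at most $\Delta$. The exponential generating function of rooted $\Delta$-bounded trees satisfies a functional equation of the form $T(x) = x \gamma_\Delta(T(x))$, whose singularity lies precisely at $x = 1/a_\Delta$, and a standard saddle-point analysis anchored at the critical value $\alpha_\Delta$ defined by \eqref{eq:alpha_Delta_definition} delivers $f_\Delta(k,c) \sim k!\cdot h_\Delta(c)\cdot a_\Delta^{k}$ up to polynomial-in-$k$ factors. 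Substituting this into $\E{X_k}$ and applying Stirling, the logarithm of the $c$-th summand becomes concave in $c$ with a bounded optimal value $c^\star = O(1)$ -- which is the source of the ``$+3$'' in the statement -- and at $k = k^*$ one obtains $\E{X_{k^*}} = o(1)$, giving the upper bound.

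For the lower bound the same asymptotics yield $\E{X_{k_*}} \to \infty$ once $C = C(\varepsilon,\Delta)$ is sufficiently large. To estimate the second moment I would split by the isomorphism type of $G[S \cap T]$:
\[
\E{X_{k_*}^2} = \sum_{S,T} \Pb{G[S]\ \text{and}\ G[T]\ \text{are both $\Delta$-bounded forests}},
\]
and control the summand using the enumeration of $\Delta$-bounded labelled forests containing a prescribed induced subforest -- the new counting input advertised in Section~\ref{sc:intro_enumeration}. The goal is $\E{X_{k_*}^2} \le (1+o(1))\E{X_{k_*}}^2$, which gives $X_{k_*} > 0$ \emph{whp} directly via Chebyshev; if only a constant ratio is attainable, Paley--Zygmund yields positive probability, and Talagrand's inequality applied to $\mathbf{F}_\Delta$ -- which is $1$-Lipschitz in edge flips and certifiable by the pairs of vertices inside a witnessing forest -- promotes the event to \emph{whp}, at the cost of at most $O(\varepsilon/p)$ in the lower endpoint.

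The main obstacle is precisely this enumeration of $\Delta$-bounded forests containing a prescribed subforest. In the unbounded-degree case one has Cayley/Pr\"ufer-type formulas at one's disposal, but no explicit counterpart is available under the degree constraint. Replacing it with a saddle-point estimate through $\alpha_\Delta$ that is sharp enough to keep the second-moment ratio close to $1$ rather than merely bounded is the technical heart of the argument and is what ultimately pins the concentration window to $O(\varepsilon/p)$.
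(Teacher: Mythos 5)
Your overall plan matches the paper's: first moment plus Markov for the upper bound, second moment for a Paley--Zygmund bound at $k_{-\varepsilon/2}$, then Talagrand to upgrade to \emph{whp} at $k_{-\varepsilon}$, with a dedicated saddle-point enumeration of bounded-degree forests feeding both moments (that is exactly the paper's Theorem~\ref{lemma_tree_of_bounded_degree}(2)). But there is a genuine gap in your Talagrand step that would make the argument fail for the regime where Talagrand is actually needed.

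You propose to use the product space indexed by vertex pairs (``Lipschitz in edge flips'') with the certificate being ``the pairs of vertices inside a witnessing forest''. That gives a certificate of size $\binom{r}{2}=\Theta(r^2)$ with $r = k_{-\varepsilon/2} = \Theta(\ln(np)/p)$, while the gap you need to bridge is $t = k_{-\varepsilon/2}-k_{-\varepsilon}+1 = \Theta(\varepsilon/p)$. The Talagrand exponent is then
\[
\frac{t^2}{4\binom{r}{2}} = \Theta\!\pr{\frac{\varepsilon^2/p^2}{(\ln(np)/p)^2}} = \Theta\!\pr{\frac{\varepsilon^2}{(\ln(np))^2}},
\]
which is bounded (and in fact tends to $0$). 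That cannot overcome the Paley--Zygmund lower bound $\Pb{\mathbf{F}_\Delta\ge k_{-\varepsilon/2}}\ge\exp\pr{-O(k)/(\ln(np))^5}$, which can be sub-exponentially small in $1/p$, so the product of the two probabilities does not go to zero. The paper circumvents this by choosing a coarser product structure: the coordinate $\Omega_i$ is the entire backward neighbourhood $\{\{i,j\}:j<i\}$ of vertex $i$, so that $\mathbf{F}_\Delta$ is still $1$-Lipschitz (removing one vertex costs at most $1$) but is $\ceil{r}$-certifiable with certificate \emph{the $r$ vertices} of the witnessing forest, whose backward neighbourhoods determine all internal edges. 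This shrinks the certificate from $\Theta(r^2)$ to $r$ and inflates the Talagrand exponent to $\Theta(\varepsilon^2/(p\ln(np)))\to\infty$ for $p<n^{-1/2+\varepsilon}$, which dominates the variance term $O(k/(\ln(np))^5)=O(1/(p(\ln(np))^4))$. Without this re-indexing of coordinates, the edge-flip version of Talagrand is simply too weak. (Two smaller remarks: (i) the ``$+3$'' in the exponent comes from the $q^{-\binom{k}{2}/k}\cdot(kpq)^{(k-1)/k}$ bookkeeping in $\mathbb{E}Z_k$, not primarily from the optimal number of components being $O(1)$; (ii) your ``$(1+o(1))$'' second-moment target is only achievable for $p>n^{-1/2+\varepsilon}$, below which one must live with a sub-exponential ratio, which is why the Talagrand step is unavoidable.)
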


Theorem~\ref{th:concentration_max_ind_rooted_forests_bounded_degree} and Lemma~\ref{lm:a_Delta} together with the upper bound $\mathbf{F} \le \ceil{2\log_{1/(1-p)}\pr{enp(1+\varepsilon)}+3}$ whp~\cite{AkhK} immediately imply that, for any function $\Delta(n)$ approaching infinity and any $1/n\ll p=o(1)$, $\mathbf{F}_{\Delta(n)}$ is also concentrated in an interval of length $o(1/p)$.

\begin{corollary}\label{th:concentration_max_ind_rooted_forests}
For any fixed $\varepsilon>0$ and any $\Delta(n)\to\infty$, there exists $C=C(\varepsilon)$ such that, if $C/n\leq p\leq 1-\varepsilon$, then whp
$$
\floor{2\log_q(enp(1-\varepsilon))+3}\le \mathbf{F}_{\Delta(n)} \le \mathbf{F} \le \ceil{2\log_q(enp(1+\varepsilon))+3},
$$
where $q=1/(1-p)$.
\end{corollary}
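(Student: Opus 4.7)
The plan is to derive Corollary~\ref{th:concentration_max_ind_rooted_forests} directly from the inputs listed in the preceding paragraph, with essentially no new work: Theorem~\ref{th:concentration_max_ind_rooted_forests_bounded_degree} applied at a large constant $\Delta_0$, Lemma~\ref{lm:a_Delta} to push $a_{\Delta_0}$ arbitrarily close to $e$, the trivial monotonicity $\mathbf{F}_{\Delta(n)} \le \mathbf{F}$, and the cited upper bound on $\mathbf{F}$ from \cite{AkhK}.

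First I would deal with the upper bound. Since any induced subgraph with bounded maximum degree is in particular an induced forest once it is acyclic, we have the deterministic inequality $\mathbf{F}_{\Delta(n)} \le \mathbf{F}$. The upper bound $\mathbf{F} \le \lceil 2\log_q(enp(1+\varepsilon))+3\rceil$ whp for $C/n \le p \le 1-\varepsilon$ is precisely the result quoted from~\cite{AkhK}, so the right-hand chain of inequalities is immediate.

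For the lower bound, the plan is as follows. Fix $\varepsilon>0$. Pick an auxiliary $\varepsilon'\in(0,\varepsilon)$ so small that $(1-\varepsilon')^{-1}(1-\varepsilon)<1$; a concrete choice is $\varepsilon'=\varepsilon/2$. By Lemma~\ref{lm:a_Delta}, the sequence $a_\Delta$ is non-decreasing with $a_\Delta\to e$, so we can choose a constant $\Delta_0$ large enough that
\[
a_{\Delta_0}(1-\varepsilon')\ge e(1-\varepsilon).
\]
Since $\Delta(n)\to\infty$, for all sufficiently large $n$ we have $\Delta(n)\ge\Delta_0$, and hence the deterministic monotonicity $\mathbf{F}_{\Delta(n)}\ge\mathbf{F}_{\Delta_0}$ holds. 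Now apply Theorem~\ref{th:concentration_max_ind_rooted_forests_bounded_degree} with the constant $\Delta_0$ and the constant $\varepsilon'$: this produces a constant $C=C(\varepsilon',\Delta_0)=C(\varepsilon)$ such that, for $C/n\le p\le 1-\varepsilon\le 1-\varepsilon'$, whp
\[
\mathbf{F}_{\Delta_0}\ge \lfloor 2\log_q(a_{\Delta_0}np(1-\varepsilon'))+3\rfloor.
\]
By the choice of $\Delta_0$, the argument of the logarithm is at least $enp(1-\varepsilon)$, so the right-hand side is at least $\lfloor 2\log_q(enp(1-\varepsilon))+3\rfloor$, which yields the desired lower bound.

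There is no real obstacle: the content lies entirely in Theorem~\ref{th:concentration_max_ind_rooted_forests_bounded_degree} and Lemma~\ref{lm:a_Delta}. The only thing to check is that $C$ can be chosen independently of $\Delta(n)$, which is why the argument first fixes a \emph{constant} $\Delta_0$ before invoking the theorem, and only afterwards uses $\Delta(n)\ge\Delta_0$ for large $n$.
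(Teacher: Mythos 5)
Your proof is correct and follows exactly the approach the paper intends (the paper states only that the corollary is immediate from Theorem~\ref{th:concentration_max_ind_rooted_forests_bounded_degree}, Lemma~\ref{lm:a_Delta}, the monotonicity $\mathbf{F}_{\Delta(n)}\le\mathbf{F}$, and the quoted upper bound from~\cite{AkhK}). The key point you correctly make explicit --- fixing a constant $\Delta_0$ depending only on $\varepsilon$ before invoking the theorem, so that $C$ is independent of $\Delta(n)$ --- is exactly the step the paper leaves implicit.
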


Note that, in the extreme case of $p=\op{const}$, Theorem~\ref{th:concentration_max_ind_rooted_forests_bounded_degree} and Corollary~\ref{th:concentration_max_ind_rooted_forests} give concentration in three consecutive points. 

Unfortunately, our attempts to obtain a similar result for trees were unsuccessful. Nevertheless, we conjecture that such a result holds.
\begin{conjecture}
For any constant $\Delta\ge3$ and any $\varepsilon>0$, there exists $C$ such that, if $C/n\leq p\leq 1-\varepsilon$, then whp
$$
\floor{2\log_q(a_{\Delta}np(1-\varepsilon))+3}\le \mathbf{T}_{\Delta}\le \ceil{2\log_q(a_{\Delta}np(1+\varepsilon))+3},
$$
where $q=1/(1-p)$.
\end{conjecture}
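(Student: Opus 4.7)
The upper bound is immediate from $\mathbf{T}_\Delta\le\mathbf{F}_\Delta$ and Theorem~\ref{th:concentration_max_ind_rooted_forests_bounded_degree}, so the task is really the lower bound. Following the template of Theorem~\ref{th:concentration_max_ind_rooted_forests_bounded_degree}, I would set $k^-:=\floor{2\log_q(a_\Delta np(1-\varepsilon))+3}$ and let $X$ count the $k^-$-subsets $S\subset[n]$ for which $G(n,p)[S]$ is a tree of maximum degree at most $\Delta$. Writing $t_k^{(\Delta)}$ for the number of labelled trees on $[k]$ with maximum degree at most $\Delta$, one has
$$
\E{X}=\binom{n}{k^-}\,t_{k^-}^{(\Delta)}\,p^{k^--1}\,(1-p)^{\binom{k^-}{2}-(k^--1)}.
$$
The saddle-point analysis of $\gamma_\Delta$ that underlies Lemma~\ref{lm:a_Delta} — applied, in its tree analogue, to the generating function behind $t_k^{(\Delta)}$ — should identify $a_\Delta$ as the exponential growth rate of $t_k^{(\Delta)}/k!$, so that $\E{X}\to\infty$ for this choice of $k^-$.

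To conclude via the second moment method I would partition ordered pairs $(S_1,S_2)$ by $j:=|S_1\cap S_2|$ and, within each $j$, by the isomorphism type of the forest $F$ that $G(n,p)[S_1\cap S_2]$ is forced to be. Let $\tau^{(\Delta)}_k(F)$ denote the number of labelled trees of maximum degree at most $\Delta$ on $[k]$ whose restriction to a prescribed $j$-subset equals $F$. Then the $(j,F)$ contribution to $\E{X^2}$ is proportional to $\tau^{(\Delta)}_{k^-}(F)^{2}$ times the appropriate edge-probability factor. The desired picture is: the $j=O(1)$ terms reproduce $\E{X}^{2}$ through the same saddle-point estimate as the first moment; the intermediate range $1\ll j\ll k^-$ is killed by a degree-budget entropy bound; and the large-$j$ tail is killed by the $(1-p)^{\Omega(j^{2})}$ factor. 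A final application of Talagrand's inequality, exactly as in the forest argument of Section~\ref{sc:intro_sparse}, then turns $\Pb{X>0}=1-o(1)$ into concentration within a window of length $o(1/p)$.

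The hard step — and the obstruction explicitly acknowledged by the authors — is the enumeration $\tau^{(\Delta)}_k(F)$ for an arbitrary planted forest $F$. For induced forests, the analogous restricted count factorises into independent rooted bounded-degree sub-forests attached at each vertex of $F$, and $\gamma_\Delta$ delivers a clean saddle point. For trees, the extension must instead fuse the $c$ components of $F$ into a single connected graph by supplying at least $c-1$ inter-component bridging edges, and the degree budget at every vertex of $F$ is shared between the interior extension and these bridges. A bivariate saddle-point asymptotic for $\tau^{(\Delta)}_k(F)^2$ that aligns, uniformly in $(j,F)$, with the first-moment saddle seems to require new analytic input. A conceivable sprinkling alternative — producing a bounded-degree induced forest via Theorem~\ref{th:concentration_max_ind_rooted_forests_bounded_degree} in $G(n,p_1)$ and completing it to a tree using $G(n,p_2)$ with $(1-p_1)(1-p_2)=1-p$ — looks attractive but is equally delicate, because every additional edge both consumes a degree at its endpoints and risks violating the induced-subgraph condition on the forest already found.
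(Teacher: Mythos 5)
The statement you are attempting to prove is a \emph{conjecture} in the paper, not a theorem: the authors write, immediately before stating it, ``Unfortunately, our attempts to obtain a similar result for trees were unsuccessful.\ Nevertheless, we conjecture that such a result holds.'' There is therefore no proof in the paper to compare against, and any correct submission would have to be a genuinely new argument.

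Your proposal is a sound diagnosis of the problem rather than a proof. The observation that the upper bound follows from $\mathbf{T}_\Delta\le\mathbf{F}_\Delta$ together with Theorem~\ref{th:concentration_max_ind_rooted_forests_bounded_degree} is correct, and your first-moment computation is consistent with how $a_\Delta$ enters via part~1 of Theorem~\ref{lemma_tree_of_bounded_degree}. But you then stall at exactly the point where the authors stalled. To run the second moment you need, uniformly in a planted forest $F$ with $\ell$ vertices and $m$ components, a sharp estimate of the number of labelled trees on $[k]$ with maximum degree at most $\Delta$ containing $F$ as an induced subforest. The paper provides this count only \emph{without} the degree restriction (Lemma~\ref{lemma_labelled_trees_with_a_given_subgraph}, part~1). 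In the dense setting (Section~\ref{sub_indiced_subtrees}, Case~2) the crude replacement $t_\Delta(k,F)\le t(k,F)$ is affordable because the relevant overlaps $\ell$ are within $O(\log_q k)$ of $k$ and the lost factor $(e/a_\Delta)^k$ is beaten by $q^{\Theta(\ell^2)}$; in the sparse regime the relevant overlap range is much wider, the same crude bound is not absorbed, and a bounded-degree analogue of Lemma~\ref{lemma_labelled_trees_with_a_given_subgraph} with a compatible saddle point would be required. You correctly name this as the obstruction, and you also correctly note that the sprinkling idea runs into the same degree-bookkeeping issue at the planted vertices. So the proposal identifies the gap but does not fill it: as written it is not a proof, and the conjecture remains open.
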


\subsection{Counting trees and forests} 
\label{sc:intro_enumeration}

In our proofs, computing the first and second moments requires estimating the number of trees and forests with (1) bounded degrees and (2) a fixed subforest. Since these results could be useful on their own, we dedicate this section to them. We recall that the sequence of functions $\gamma_{\Delta}$ is defined in~\eqref{eq:f_Delta_definition}.\\

We prove the following generalisation of Cayley's formula. A {\it rooted forest} is a vertex-disjoint union of rooted trees, i.e. each component has a single root.
\begin{theorem}\label{lemma_tree_of_bounded_degree}
 For any fixed integer $\Delta\ge3$ and any $0<w=w(n)\leq n$,
\begin{enumerate}
\item the number of labelled trees on $[n]$ with maximum degree at most $\Delta$ equals
$$
t_{\Delta}(n)=(1+o(1))\alpha_{\Delta}\sqrt{\frac{\gamma_{\Delta}(\alpha_{\Delta})}{\gamma_{\Delta-2}(\alpha_{\Delta})}}\pr{\frac{a_{\Delta}}{e}}^n n^{n-2};
$$

\item letting $f_{\Delta}(n,m)$ be the number of labelled rooted forests on $[n]$ with maximum degree at most $\Delta$, with  $m$ components, and roots having degrees at most $\Delta-1$,
$$
\sum_{m=1}^n f_{\Delta}(n,m) w^m = (1+o(1))w\frac{(n-1)!}{r^{n-1}}\frac{(\gamma_{\Delta}(r))^{n}e^{r w}}{\sqrt{2\pi\beta(r) n}},
$$
where $r$ is the unique positive solution of the equation $\gamma'_{\Delta}=0$ and 
\begin{equation}
\beta(r)=\frac{r^2\gamma''_{\Delta}(r)}{\gamma_{\Delta}(r)}+\frac{2rw}{n}-\frac{r^2w^2}{n^2}.
\label{eq:beta_definition}
\end{equation}
\end{enumerate}
\end{theorem}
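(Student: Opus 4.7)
The plan is to handle the two parts separately through coefficient extraction plus a saddle-point asymptotic. For Part 1, I would first invoke the multinomial form of Cayley's formula: the number of labelled trees on $[n]$ with degree sequence $(d_1,\ldots,d_n)$ (satisfying $\sum d_i = 2(n-1)$) equals $\binom{n-2}{d_1-1,\ldots,d_n-1}$. Summing over admissible sequences with $1 \le d_i \le \Delta$ and writing $e_i = d_i - 1$, the constraint becomes $\sum e_i = n-2$ with $0 \le e_i \le \Delta-1$, and one recognises the result as a coefficient:
$$
t_\Delta(n) = (n-2)!\,[x^{n-2}]\gamma_\Delta(x)^n.
$$
Applying the Cauchy integral on a circle $|x|=r$ and optimising, the saddle-point equation is $r\gamma_\Delta'(r)=\gamma_\Delta(r)$, equivalently $r\gamma_{\Delta-1}(r)/\gamma_\Delta(r)=1$, whose unique positive root is $r=\alpha_\Delta$. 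A direct differentiation of $\ln\gamma_\Delta(x)-\ln x$ at the saddle simplifies, via the saddle relation $\gamma_{\Delta-1}(\alpha_\Delta)/\gamma_\Delta(\alpha_\Delta)=1/\alpha_\Delta$, to a second derivative $\gamma_{\Delta-2}(\alpha_\Delta)/\gamma_\Delta(\alpha_\Delta)$; combining the Gaussian asymptotic with Stirling for $(n-2)!$ and the identity $\gamma_\Delta(\alpha_\Delta)=\alpha_\Delta a_\Delta$ collapses everything to the claimed formula.

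For Part 2 I would use a generating function. Let $T(x)$ be the EGF of rooted labelled trees in which every vertex has at most $\Delta-1$ children; this is exactly the class in which non-roots have tree-degree at most $\Delta$ and the root has degree at most $\Delta-1$. Decomposing a tree as root plus unordered set of child subtrees yields the functional equation $T(x)=x\gamma_\Delta(T(x))$, and tracking components by $w$ gives the forest EGF $e^{wT(x)}$, so
$$
\sum_{m=1}^n f_\Delta(n,m)\,w^m = n!\,[x^n]e^{wT(x)}.
$$
Lagrange--B\"urmann inversion applied to $T=x\gamma_\Delta(T)$ with $H(T)=e^{wT}$ converts this into an ordinary coefficient extraction,
$$
[x^n]e^{wT(x)} = \tfrac{w}{n}\,[T^{n-1}]\,e^{wT}\gamma_\Delta(T)^n,
$$
after which I would apply the saddle-point method to the integrand $e^{wT}\gamma_\Delta(T)^n/T^n$ on $|T|=r$. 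The saddle equation $n\gamma_\Delta'(r)/\gamma_\Delta(r)+w=(n-1)/r$ is solved by the $r$ that appears in the statement (the printed equation $\gamma_\Delta'=0$ appears to be a typo, since $\gamma_\Delta'>0$ on $(0,\infty)$). Computing the second derivative of the log-integrand at $r$ and simplifying using the saddle relation produces the quantity $-n\beta(r)+O(1)$ with $\beta(r)$ exactly as defined, and the standard Gaussian estimate yields
$$
[T^{n-1}]e^{wT}\gamma_\Delta(T)^n \sim \frac{e^{wr}\gamma_\Delta(r)^n}{r^{n-1}\sqrt{2\pi n\beta(r)}};
$$
multiplying by $(n-1)!\cdot w$ reproduces the claimed asymptotic.

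The main obstacle is controlling the tails of the saddle-point integrals uniformly in $w(n)\le n$. Because $\gamma_\Delta$ is a polynomial with strictly positive Taylor coefficients, the strict maximum $|\gamma_\Delta(re^{i\theta})|<\gamma_\Delta(r)$ for $\theta\ne 0$ is standard and yields exponential tail decay, immediately justifying Laplace's method for Part 1. For Part 2 one must additionally verify that the saddle $r=r(w,n)$ varies continuously in a compact subinterval of $(0,\alpha_\Delta]$ as $w$ ranges over $(0,n]$, so that tail bounds and the replacement of $\Psi''(0)$ by $-n\beta(r)$ remain uniform in $w$; monotonicity of both sides of the saddle equation in $r$, together with elementary bounds on $\gamma_\Delta$, furnishes this. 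The $O(1)$ discrepancy between $\Psi''(0)$ and $-n\beta(r)$ is absorbed by the $(1+o(1))$ prefactor, so no loss of sharpness occurs.
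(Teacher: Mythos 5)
Your proposal is essentially correct and arrives at the same coefficient-extraction formulas and the same saddle-point integrals as the paper, but via two genuinely different combinatorial routes, which is worth noting. For Part 1 the paper also reduces to $t_\Delta(n)=(n-2)!\,[x^{n-2}]\gamma_\Delta(x)^n$ (via Pr\"ufer sequences, you via the degree-sequence form of Cayley's formula — same content), but then sidesteps the saddle-point contour altogether by interpreting the normalised coefficient as $\Pb{S_n=n-2}$ for a sum of bounded iid random variables with generating function $\gamma_\Delta(\alpha_\Delta x)/\gamma_\Delta(\alpha_\Delta)$, and invoking the local limit theorem. Your direct saddle-point version works and requires no external probabilistic theorem, while the paper's route outsources the Gaussian tail control to the LLT; they are equivalent in substance. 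For Part 2 the paper again works through Pr\"ufer sequences for rooted forests to get $\sum_m f_\Delta(n,m)w^m=(n-1)!\,w\,[x^n]\,x\,\gamma_\Delta(x)^n e^{wx}$ and only then hits the saddle-point method; your Lagrange--B\"urmann inversion from $T=x\gamma_\Delta(T)$ and $[x^n]e^{wT(x)}=\frac{w}{n}[T^{n-1}]e^{wT}\gamma_\Delta(T)^n$ lands on the identical integral $(n-1)!\,w\,[T^{n-1}]e^{wT}\gamma_\Delta(T)^n$, so the subsequent analytic work coincides. Your reading of the saddle equation is correct: the paper's line ``the unique positive solution of $\gamma_\Delta'=0$'' is indeed a typo (it should be $g'=0$ with $g(z)=n\ln\gamma_\Delta(z)-n\ln z+wz$, i.e.\ $\frac{r\gamma_\Delta'(r)}{\gamma_\Delta(r)}+\frac{rw}{n}=1$), since $\gamma_\Delta'>0$ on $(0,\infty)$. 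Two small slips: your right-hand side $(n-1)/r$ in the saddle equation should be $n/r$ to match the $T^n$ denominator you yourself state; and you should confine $r$ to a compact subinterval of $(0,\alpha_\Delta)$, not $(0,\alpha_\Delta]$, since the extra $rw/n$ term pushes the saddle strictly below the $w=0$ value. Finally, the tail and uniformity control that you flag as ``the main obstacle'' is exactly where the paper spends most of its effort in Section~\ref{sc:part2_proof} (the $\sin$-product bound, the split at $|\theta|\le\sqrt{\theta_0}$, and Claims~\ref{cl:cl1}--\ref{cl:cl2} showing $r\in(0,\min\{2,n/w\})$ and $\beta(r)=\Theta(1)$); your high-level plan for that part is sound but is the one place where the sketch would need to be fleshed out.
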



We prove the first part of Theorem~\ref{lemma_tree_of_bounded_degree} using the method of generating functions and a neat probabilistic approach: we express the number of labelled trees in terms of the probability that a certain random variable equals $n-2$. The later probability is estimated via the local limit theorem. The proof of the second part also relies on generating functions but is more technically involved: using Cauchy's coefficient formula, we express the estimated quantity in terms of a contour integral, and then approximate the latter using the saddle-point method.\\ 



Proving sufficient upper bounds for second moments of the desired random variables also requires enumeration of trees and forests containing a fixed induced forest.

\begin{lemma}\label{lemma_labelled_trees_with_a_given_subgraph}
Let $F$ be an acyclic graph on $[n]$ with $m$ non-empty connected components of sizes $\ell_1, \ldots, \ell_m$, let $\ell=\ell_1+\ldots+\ell_m$. Then,
\begin{enumerate}
\item the number of trees on $[n]$ containing $F$ as an induced subgraph equals 
$$
t(n,F)=\ell_1\ldots \ell_m n^{n-\ell-1}(n-\ell)^{m-1};
$$
\item the number of forests on $[n]$ with $h$ components containing $F$ as an induced subgraph equals 
$$
f(n,h,F)=\ell_1\ldots \ell_m
\sum\limits_{j=0}^{n-\ell} \binom{n-\ell}{j}\ell^{n-\ell-j}\binom{j+m-1}{h-1}(n-\ell)^{j+m-h}.
$$
\end{enumerate}
\end{lemma}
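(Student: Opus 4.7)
The plan for both parts is to reduce the count to a (weighted) spanning-tree count on an auxiliary multigraph and evaluate it via Kirchhoff's matrix-tree theorem, finishing with a block-determinant calculation.

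For Part 1, contract each component $F_i$ of $F$ to a single super-vertex $u_i$, so that a tree $T$ on $[n]$ with $T[V(F)]=F$ becomes a tree $T^*$ on $V^* := \{u_1,\ldots,u_m\}\cup A$, where $A := [n]\setminus V(F)$. The induced condition translates exactly to the non-adjacency of every pair of super-vertices in $T^*$: there can be no $T$-edge inside any $V(F_i)$ beyond those of $F_i$, and no $T$-edge joining distinct $V(F_i),V(F_j)$. Conversely, each edge $\{u_i,a\}$ of $T^*$ has $\ell_i$ preimages in $T$, so the fibre over $T^*$ has size $\prod_i \ell_i^{d_{u_i}(T^*)}$. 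Hence $t(n,F)$ equals the number of spanning trees of the multigraph $H$ on $V^*$ with all $A$--$A$ edges simple, $\ell_i$ parallel edges between $u_i$ and each $a\in A$, and no $u_i$--$u_j$ edges. Computing any Laplacian cofactor, say $\det L^{(u_1)}$, via the block identity
\[
\det\!\begin{pmatrix}A&B\\C&D\end{pmatrix}=\det(D)\det(A-BD^{-1}C),
\]
with $D$ the $A$-block $nI-J$, the matrix determinant lemma gives $\det D=\ell\,n^{n-\ell-1}$ and reduces $A-BD^{-1}C$ to $(n-\ell)\bigl(\mathrm{diag}(\vec{\ell})-\tfrac{1}{\ell}\vec{\ell}\vec{\ell}^T\bigr)$, whose determinant equals $(n-\ell)^{m-1}\ell_1\cdots\ell_m/\ell$. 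Multiplying yields the stated formula.

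For Part 2, I would use the classical bijection between (rooted) forests on $[n]$ with $h$ components and trees on $\{0\}\cup[n]$ with $d_0=h$, obtained by attaching $0$ to every root; under it, inducedness of $F$ on $V(F)\subset[n]$ is preserved. Applying the contraction from Part 1 while keeping $0$, and weighting every edge incident to $0$ by a formal variable $x$, the weighted matrix-tree theorem gives
\[
\sum_{T}\Bigl(\prod_{e\in T}w_e\Bigr)x^{d_0(T)}=\det L^{(0)}(x),
\]
so $f(n,h,F)=[x^h]\det L^{(0)}(x)$. The analogous block decomposition, now with $D=(x+n)I-J$, together with the algebraic collapse $(x+\ell)(x+n-\ell)-\ell(n-\ell)=x(x+n)$, yields
\[
\det L^{(0)}(x)=x\,\ell_1\cdots\ell_m\,(x+n)^{n-\ell}(x+n-\ell)^{m-1}.
\]
To match the shape in the statement, I would rewrite $x+n=(x+n-\ell)+\ell$ and binomially expand,
\[
x(x+n)^{n-\ell}(x+n-\ell)^{m-1}=\sum_{j=0}^{n-\ell}\binom{n-\ell}{j}\ell^{n-\ell-j}\,x(x+n-\ell)^{j+m-1},
\]
and then use $[x^h]x(x+n-\ell)^{j+m-1}=\binom{j+m-1}{h-1}(n-\ell)^{j+m-h}$.

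The main obstacle is the algebraic collapse in the block computation, particularly in Part 2: the identity $(x+\ell)(x+n-\ell)-\ell(n-\ell)=x(x+n)$ is precisely what produces the isolated factor $x$, which in turn creates the shift $h\mapsto h-1$ inside the binomial coefficient $\binom{j+m-1}{h-1}$. In addition, the contribution $\vec{\ell}\vec{\ell}^T$ coming from $BD^{-1}C$ must combine cleanly with $\mathrm{diag}(\vec{\ell})$ in the top-left block so that $\ell_1\cdots\ell_m$ factors out as a single prefactor, rather than becoming entangled with the remaining determinant.
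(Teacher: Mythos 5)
Your proof is correct, but it takes a genuinely different route from the paper's. The paper first encodes trees on $[n]$ with a prescribed degree sequence and with $[m]$ forming an independent set via a modified Pr\"{u}fer code (giving the explicit count \eqref{eq_n_trees_with_independent_set_given_deg}), then derives both parts by contracting the components of $F$, summing over degree sequences, and using multinomial identities; for the forest count it appends an auxiliary vertex forced to have degree $h$. You instead contract each component of $F$ to a super-vertex $u_i$ of weight $\ell_i$ toward $A$, forbid $u_i$--$u_j$ edges, and apply the weighted Kirchhoff matrix-tree theorem, evaluating the Laplacian cofactor by a Schur-complement block-determinant computation; I checked the algebra in both parts and it is right, including the key collapse $(x+\ell)(x+n-\ell)-\ell(n-\ell)=x(x+n)$ in Part~2. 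Your route buys a cleaner one-shot treatment of Part~2, since $\det L^{(0)}(x)=\ell_1\cdots\ell_m\,x(x+n)^{n-\ell}(x+n-\ell)^{m-1}$ is the entire generating polynomial in the number of components $h$, whereas the Pr\"{u}fer approach re-does the multinomial bookkeeping for each $h$; the Pr\"{u}fer approach is more elementary (no linear algebra) and also directly yields the refinement by degree sequence, which the paper reuses. One terminological point you implicitly (and correctly) fix with your parenthetical ``(rooted)'': despite the word ``forests'' in the statement, the quantity $f(n,h,F)$ actually counts \emph{rooted} forests — this is what both your proof and the paper's proof compute, and it is the interpretation consistent with how the lemma is applied later (e.g., in the estimation of $F_\ell$ in Section~\ref{sc:small_p_proof_var}).
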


The proof of Lemma~\ref{lemma_labelled_trees_with_a_given_subgraph} is relatively standard: we use a modification of the Pr\"{u}fer sequence to encode trees with a fixed independent set and then present an algorithm of reducing the initial problem to encoding such trees.\\

The rest of the paper is organised as follows. We prove Theorem~\ref{lemma_tree_of_bounded_degree} and Lemma~\ref{lemma_labelled_trees_with_a_given_subgraph} in Section~\ref{sc:counting}. Then, in Section~\ref{sc:auxiliary}, we use these two results to get some auxiliary bounds. Theorems~\ref{th:concentration_max_ind_tree_bounded_deg}~and~\ref{th:concentration_max_ind_rooted_forests_bounded_degree} are proven in Sections~\ref{sc:const_p}~and~\ref{sc:small_p} respectively. 

\section{Counting trees and forests of bounded degree}
\label{sc:counting}

\subsection{Proof of part 1 of Theorem~\ref{lemma_tree_of_bounded_degree}}

For $\Delta\ge3$, we denote by $t_{\Delta}(n)$ the number trees on $[n]$ with maximum degree at most $\Delta$. In the usual way, we define the coefficient extraction operator $[x^n]$ applied to a power series $\sum_{k=0}^{\infty}c_k{x^k}$ as 
$$
[x^n]\sum\limits_{k=0}^{\infty}c_k{x^k}:=c_n.
$$
Recall that $\gamma_{\Delta}$ is defined in~\eqref{eq:f_Delta_definition}. Since 
$$
[x^{n-2}]\left((n-2)!\sum_{k=0}^{\Delta-1}{\frac{x^k}{k!}}\right)^n=\sum_{k_1,\ldots,k_n\leq\Delta-1:\,k_1+\ldots+k_n=n-2}\frac{(n-2)!}{k_1!\ldots k_n!},
$$
which is exactly the number of sequences in $[n]^{n-2}$ with each number from $[n]$ appearing at most $\Delta-1$ times, we get 
\begin{equation}
t_{\Delta}(n)=(n-2)![x^{n-2}]\pr{\gamma_{\Delta}(x)}^n=\left.\frac{d^{n-2}}{dx^{n-2}}(\gamma_{\Delta}(x))^n\right\vert_{x=0},
\label{eq:f->t}
\end{equation}
due to the encoding of trees with maximum degrees bounded by $\Delta$ via such Pr\"{u}fer sequences. 

Recalling that $\alpha_{\Delta}$ is a solution of the equation~\eqref{eq:alpha_Delta_definition} (see the definition in Section~\ref{sc:intro}), we consider a random variable $\xi$ with generating function 
 $$
 \varphi(x):=\mathbb{E}{x^{\xi}}=\frac{\gamma_{\Delta}(\alpha_{\Delta}x)}{\gamma_{\Delta}(\alpha_{\Delta})}=
 \sum_{k=0}^{\Delta-1}\frac{\alpha_{\Delta}^k}{k!\gamma_{\Delta}(\alpha_{\Delta})}x^k.
 $$
In other words, for every $k\in[0,\ldots,\Delta-1]$, $\mathbb{P}(\xi=k)=\frac{\alpha_{\Delta}^k}{k!\gamma_{\Delta}(\alpha_{\Delta})}$. Now, let $\xi_1,\ldots,\xi_n$ be independent copies of $\xi$ and $S_n=\xi_1+\ldots+\xi_n$. Clearly, the generating function of $S_n$ equals
$$
\varphi_{S_n}(x)=\pr{\varphi(x)}^n.
$$
Then, for $m\in\mathbb{Z}_{\geq 0}$:
\begin{align*}
\Pb{S_n=m}=[x^m]\varphi_{S_n}(x) &=
\left.\frac{1}{m!}\frac{d^m}{dx^m}\varphi_{S_n}(x)\right\vert_{x=0}\\
&=
\left.\frac{\alpha_{\Delta}^{m}}{m!}\frac{d^m}{dx^m}\varphi_{S_n}\pr{\frac{x}{\alpha_{\Delta}}}\right\vert_{x=0}=
\frac{\alpha_{\Delta}^{m}}{m!\pr{\gamma_{\Delta}(\alpha_{\Delta})}^n}\left.\frac{d^m}{dx^m}\pr{\gamma_{\Delta}(x)}^n\right\vert_{x=0}.
\end{align*}
Due to~\eqref{eq:f->t}, setting $m=n-2$, we get
\begin{equation}
t_{\Delta}(n)=\frac{\pr{\gamma_{\Delta}(\alpha_{\Delta})}^n}{\alpha_{\Delta}^{n-2}}(n-2)!\cdot\Pb{S_n=n-2}.
\label{eq:t_delta_through_prob}
\end{equation}
By the definition of $\alpha_{\Delta}$ and since $\gamma'_{\Delta}(x)=\gamma_{\Delta-1}(x)$, we get
$$
\mathbb{E} S_n=n\mathbb{E} \xi =n\varphi'(1)=n\frac{\alpha_{\Delta}\gamma_{\Delta-1}(\alpha_{\Delta})}{\gamma_{\Delta}(\alpha_{\Delta})}=n
$$
and
$$
\mathrm{Var}S_n=n\mathrm{Var}\xi=n(\varphi''(1)-\mathbb{E}\xi(\mathbb{E}\xi-1))=n\varphi''(1)=n\frac{\alpha_{\Delta}^{2}\gamma_{\Delta-2}(\alpha_{\Delta})}{\gamma_{\Delta}(\alpha_{\Delta})}.
$$
Applying the local limit theorem (see, e.g.,~\cite[Chapter 9]{GnedenkoKolmogorov}), we get
$$
\lim\limits_{n\to+\infty}\left|\sqrt{\mathrm{Var}{S_n}}\cdot\Pb{S_n=n-2}-\frac{1}{\sqrt{2\pi}}e^{-\frac{\pr{n-2-\mathbb{E}{S_n}}^2}{2\mathrm{Var}{S_n}}}\right|=0.
$$
Since $\mathrm{Var}{S_n}\to\infty$ as $n\to\infty$, we get that
$$
\lim\limits_{n\to\infty}\sqrt{n\mathrm{Var}\xi}\cdot\Pb{S_n=n-2}=\frac{1}{\sqrt{2\pi}}.
$$
Finally, recalling that $a_{\Delta}=\gamma_{\Delta-1}(\alpha_{\Delta})$, from~\eqref{eq:t_delta_through_prob} and since $x=\alpha_{\Delta}$ satisfies~\eqref{eq:alpha_Delta_definition},
\begin{align}
t_{\Delta}(n)=(1+o(1))\frac{\pr{\gamma_{\Delta}(\alpha_{\Delta})}^n}{\alpha_{\Delta}^{n-2}}\frac{(n-2)!}{\sqrt{2\pi{n}\D{\xi_{\Delta}}}}
 &=(1+o(1))\frac{\alpha_{\Delta}\pr{\gamma_{\Delta-1}(\alpha_{\Delta})}^n}{\sqrt{2\pi{n}\frac{\gamma_{\Delta-2}(\alpha_{\Delta})}{\gamma_{\Delta}(\alpha_{\Delta})}}}(n-2)!\notag\\
 &=(1+o(1))\alpha_{\Delta}\sqrt{\frac{\gamma_{\Delta}(\alpha_{\Delta})}{\gamma_{\Delta-2}(\alpha_{\Delta})}}\pr{\frac{a_{\Delta}}{e}}^n{n^{n-2}},
 \label{eq:t_Delta_final_asymp}
\end{align}
completing the proof.\\


\begin{remark}
A similar method was used in~\cite{Britikov1988} to estimate the number of labelled forests with a fixed number of components.
\end{remark}

\subsection{Proof of part 2 of Theorem~\ref{lemma_tree_of_bounded_degree}}
\label{sc:part2_proof}

Rooted forests with $m$ components can be encoded via Pr\"{u}fer sequences in the following way. Without loss of generality assume that the roots of a given forest on $[n]$ comprise the set $[m]$. The Pr\"{u}fer sequence is constructed using the standard procedure, pruning non-root leaves with the largest labels first until only roots survive. The sequence takes a form $(a_1,\ldots,a_{n-1-m},b)$, with $a_i\in[n], b\in[m]$ since the last element of the code can only be a root. 

Recall that the number of occurrences of a vertex in a Pr\"{u}fer sequence $(a_1,\ldots,a_{n-1-m})$ equals its degree minus one unless this vertex is a root. The number of occurrences of a root is at most its degree unless this root is $b$, and the number of occurrences of $b$ is at most its degree minus one, as for the non-root vertices. Hence, the following factors contribute to $f_{\Delta}(n, m)$: 
\begin{itemize}
\item the number of choices of the set of roots, which is ${n\choose m}$;
\item the number of choices of $b$, which is $m$;
\item the number of Pr\"{u}fer sequences $(a_1,\ldots,a_{n-1-m})$ where each number from $[n]\setminus\{b\}$ appears in $(a_1,\ldots,a_{n-1-m})$ at most $\Delta-1$ times, while $b$ appears at most $\Delta-2$ times  (it is clear that the defined map from rooted forests to sequences is a bijection). 
\end{itemize}
Thus,
\begin{align*}
f_{\Delta}(n, m) & =\binom{n}{m}m(n-m-1)![x^{n-m-1}]\left(\sum_{j=0}^{\Delta-2}\frac{x^j}{j!}\right)\left(\sum_{j=0}^{\Delta-1}\frac{x^j}{j!}\right)^{n-1}\\
&=
\binom{n}{m}m(n-m-1)![x^{n-m-1}]\gamma_{\Delta-1}(x)(\gamma_{\Delta}(x))^{n-1}\\
&=\binom{n}{m}\cdot\frac{m}{n}\cdot(n-m-1)![x^{n-m-1}]\frac{\partial}{\partial x}(\gamma_{\Delta}(x))^{n}\\
&=\binom{n-1}{m-1}(n-m)![x^{n-m}](\gamma_{\Delta}(x))^{n}
=\frac{(n-1)!}{(m-1)!}[x^{n}]x^{m}(\gamma_{\Delta}(x))^{n}.
\end{align*}
Then,
\begin{align}
\sum\limits_{m=1}^{n}f_{\Delta}(n,m)w^m
&=\sum\limits_{m=1}^{n}\frac{(n-1)!}{(m-1)!}[x^{n}]x^{m}\pr{\gamma_{\Delta}(x)}^n w^m\notag\\
&=(n-1)!w[x^{n}]x\pr{\gamma_{\Delta}(x)}^n\sum\limits_{j=0}^{n-1}\frac{w^j x^j}{j!}\notag\\
&=(n-1)!w[x^{n}]x\pr{\gamma_{\Delta}(x)}^n\sum\limits_{j=0}^{\infty}\frac{w^j x^j}{j!}
=(n-1)!w[x^{n}]x\pr{\gamma_{\Delta}(x)}^ne^{wx}.
\label{eq:forests_weighted_generating}
\end{align}
By the Cauchy's coefficient formula (see, e.g.,~\cite[Theorem IV.4]{Flajolet2009}),
$$
[x^{n}]x\pr{\gamma_{\Delta}(x)}^{n}e^{wx}
=\frac{1}{2\pi i}\oint z\pr{\gamma_{\Delta}(z)}^{n}e^{wz}\frac{dz}{z^{n+1}}
=\frac{1}{2\pi i}\oint ze^{g(z)}\frac{dz}{z},
$$
where 
\begin{equation}
g(z)=n\ln(\gamma_\Delta(z))-n\ln z + wz.
\label{eq:g_def}
\end{equation}
Now, let us choose $r=r(n)>0$ so that it is the saddle point, i.e. it satisfies \textit{the saddle-point equation} $g'(x)=0$. The saddle-point equation is equivalent to
$$
\frac{\partial}{\partial x}e^{g(x)}=0 \,\Leftrightarrow\,
\frac{\partial}{\partial x}\pr{\frac{\gamma_{\Delta}(x)e^{\frac{wx}{n}}}{x}}^n=0\,\Leftrightarrow
\frac{\partial}{\partial x}\pr{\frac{\gamma_{\Delta}(x)e^{\frac{wx}{n}}}{x}}=0.
$$
Let $\tilde g(x)=\gamma_{\Delta}(x)\exp\{wx/n\}/x$. Since $\lim_{x\to 0+}\tilde g(x)=\infty$ and $\tilde g'(x)>0$ for large $x$, the equation has a positive root. Moreover, since the power series (at $x=0$) of $xg$ has non-negative coefficients, the solution is unique (see~\cite[VIII.4]{Flajolet2009}) . In what follows, we estimate the Cauchy coefficient integral using the classical saddle-point method, see details in~\cite[Chapter VIII]{Flajolet2009}.

Integrating along the circle of radius $r$ and using polar coordinates, i.e. setting $z=re^{i\theta}$,
$$
[x^{n}]x\pr{\gamma_{\Delta}(x)}^{n}e^{wn}
=\frac{1}{2\pi}\int\limits_{-\pi}^{\pi}re^{i\theta}e^{g(re^{i\theta})}d\theta.
$$
Next, let us show that $r=O(1)$ and $r<n/w$. Let us rewrite the saddle-point equation as follows:
\begin{equation}
\frac{x\gamma'_{\Delta}(x)}{\gamma_{\Delta}(x)}
+\frac{xw}{n}=1.
\label{eq:saddle_point}
\end{equation}
Since
$$
\left.\pr{\frac{x\gamma'_{\Delta}(x)}{\gamma_{\Delta}(x)}+\frac{xw}{n}}\right|_{x=0}=0,\,\,\,
\left.\pr{\frac{x\gamma'_{\Delta}(x)}{\gamma_{\Delta}(x)}+\frac{xw}{n}}\right|_{x=\frac{n}{w}}>1,\,\,\,
\left.\pr{\frac{x\gamma'_{\Delta}(x)}{\gamma_{\Delta}(x)}+\frac{xw}{n}}\right|_{x=2}
>\frac{2\gamma'_{\Delta}(2)}{\gamma_{\Delta}(2)}>1,
$$
we immediately get
\begin{claim}
$r\in(0,\min\{2,n/w\})$.
\label{cl:cl1}
\end{claim}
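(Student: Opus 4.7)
The plan is to deduce the claim from the three boundary evaluations of the function $h(x):=\frac{x\gamma'_{\Delta}(x)}{\gamma_{\Delta}(x)}+\frac{xw}{n}$ that are displayed immediately before the claim, combined with the uniqueness of the positive root of the saddle-point equation $h(x)=1$ that was argued just before (coming from the fact that the power series of $xg$ at $0$ has non-negative coefficients, together with the cited result in~\cite[VIII.4]{Flajolet2009}).

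First, I would verify that $h(0)=0<1$, which is immediate, and that $h(n/w)>1$: at $x=n/w$ the second summand equals exactly $1$, while the first is strictly positive because $\gamma'_{\Delta}(x)=\gamma_{\Delta-1}(x)>0$ for $x>0$. The only point that needs genuine computation is the inequality $2\gamma'_{\Delta}(2)/\gamma_{\Delta}(2)>1$, equivalent to $2\gamma_{\Delta-1}(2)-\gamma_{\Delta}(2)>0$. Expanding and reindexing,
$$
2\gamma_{\Delta-1}(2)-\gamma_{\Delta}(2)=\sum_{k=0}^{\Delta-2}\frac{2^{k+1}}{k!}-\sum_{k=0}^{\Delta-1}\frac{2^{k}}{k!}=-1+\sum_{k=1}^{\Delta-1}\frac{2^{k}(k-1)}{k!}.
$$
Since $\Delta\ge 3$, the sum on the right contains the $k=2$ term $4/2=2$, and every other term is non-negative; hence the whole expression is at least $1$, giving $h(2)>1$.

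Having established $h(0)=0<1$, $h(2)>1$ and $h(n/w)>1$, continuity of $h$ on $(0,\infty)$ and the intermediate value theorem produce a value of $x$ in $(0,2)$ and a value in $(0,n/w)$ at which $h$ equals $1$. By the uniqueness of the positive solution of $h(x)=1$, both of these values must coincide with $r$, so $r<2$ and $r<n/w$ simultaneously, which is precisely $r\in(0,\min\{2,n/w\})$. No real obstacle is expected here; the only mildly delicate point is the uniform-in-$\Delta$ bound $h(2)>1$, which is handled cleanly by isolating the $k=2$ term as above.
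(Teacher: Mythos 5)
Your proof is correct and takes essentially the same approach as the paper: it uses the three boundary evaluations of $h(x)=\frac{x\gamma'_{\Delta}(x)}{\gamma_{\Delta}(x)}+\frac{xw}{n}$ together with the previously established uniqueness of the positive solution of the saddle-point equation. The only addition is that you spell out the elementary computation behind $2\gamma_{\Delta-1}(2)>\gamma_{\Delta}(2)$ (isolating the $k=2$ term), which the paper simply asserts.
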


Set $\theta_0=n^{-2/5}$. For $|\theta|\le\theta_0$:
\begin{align}
g(re^{i\theta})-g(r)
&=n\ln\pr{\frac{\gamma_{\Delta}(re^{i\theta})}{\gamma_{\Delta}(r)e^{i\theta}}}
+wr(e^{i\theta}-1)\notag\\
&\stackrel{(*)}=n\ln\pr{\frac{\gamma_{\Delta}(r)+\gamma'_{\Delta}(r)r(e^{i\theta}-1) +\frac{1}{2}\gamma''_{\Delta}(r)r^2(e^{i\theta}-1)^2+O(\theta^3)}{\gamma_{\Delta}(r)e^{i\theta}}}
+wr(e^{i\theta}-1)\notag\\
&\stackrel{\eqref{eq:saddle_point}}=n\ln\pr{\frac{\gamma_{\Delta}(r)e^{i\theta}
-r\gamma_{\Delta}(r)\frac{e^{i\theta}-1}{n/w}
-\frac{r^2}{2}\gamma''_{\Delta}(r)\theta^2+O(\theta^3)}{\gamma_{\Delta}(r)e^{i\theta}}}
+wr(e^{i\theta}-1)\notag\\
&=n\ln\pr{1
-\frac{wr(i\theta+\frac{\theta^2}{2})}{n}
-\frac{\frac{r^2}{2}\gamma''_{\Delta}(r)\theta^2}{\gamma_{\Delta}(r)}+O(\theta^3)}
+wr\left(i\theta-\frac{\theta^2}{2}+O(\theta^3)\right)\notag\\
&=-wr\left(i\theta+\frac{\theta^2}{2}\right)
+\frac{n}{2}\frac{w^2r^2\theta^2}{n^2}
-\frac{r^2\gamma''_{\Delta}(r)}{2\gamma_{\Delta}(r)}n\theta^2
+O(n\theta^3)
+wr\left(i\theta-\frac{\theta^2}{2}\right)\notag\\
&=-\pr{\frac{r^2\gamma''_{\Delta}(r)}{2\gamma_{\Delta}(r)}+\frac{rw}{n}-\frac{r^2w^2}{2n^2}}n\theta^2
+O(n\theta^3)\notag\\
&=-\frac{1}{2}\beta(r)n\theta^2+O(n\theta^3)=-\frac{1}{2}\beta(r)n\theta^2+o(1),
\label{eq:g-g-beta}
\end{align}
where $\beta(r)$ is defined in~\eqref{eq:beta_definition} and the equality (*) holds since $\gamma_{\Delta}$ is analytic.

\begin{claim}
$\beta(r)=\Theta(1)$.
\label{cl:cl2}
\end{claim}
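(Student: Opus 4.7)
The plan is to derive a clean probabilistic identity for $\beta(r)$ and then bound it by elementary means. For $x>0$, consider the random variable $\eta_x$ supported on $\br{0,\ldots,\Delta-1}$ with $\Pb{\eta_x=k}=x^k/(k!\gamma_\Delta(x))$. A direct calculation gives $\E{\eta_x}=x\gamma'_\Delta(x)/\gamma_\Delta(x)=:h(x)$ and $\E{\eta_x(\eta_x-1)}=x^2\gamma''_\Delta(x)/\gamma_\Delta(x)$, so
\[
\D{\eta_x}=\frac{x^2\gamma''_\Delta(x)}{\gamma_\Delta(x)}+h(x)-h(x)^2.
\]
Setting $b:=rw/n$, the saddle-point equation~\eqref{eq:saddle_point} reads $h(r)+b=1$, so substituting $h(r)=1-b$ into the definition~\eqref{eq:beta_definition} and simplifying yields the key identity
\[
\beta(r)=\D{\eta_r}+b.
\]

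The upper bound is then immediate: by Claim~\ref{cl:cl1}, $r\in(0,2]$ and $b\in[0,1)$, while $\D{\eta_r}$ is continuous on $[0,2]$, so $\beta(r)=O(1)$.

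For the lower bound I would split into two cases. If $b\ge 1/2$, then $\beta(r)\ge 1/2$ at once. Otherwise $b<1/2$ forces $h(r)>1/2$. A short computation (quotient rule on $h$) shows $h'(x)=\D{\eta_x}/x>0$ for all $x>0$, so $h$ is strictly increasing with $h(0)=0$; consequently $r\ge r_0:=h^{-1}(1/2)$, a positive constant depending only on $\Delta$. On the compact interval $[r_0,2]$ the function $\D{\eta_r}$ is continuous and strictly positive (every atom $\Pb{\eta_r=k}$ is positive for $r>0$, so $\eta_r$ is non-degenerate), hence bounded below by some $\delta(\Delta)>0$, giving $\beta(r)\ge\delta(\Delta)$.

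The only real subtlety is organising the uniformity in $w$: since $w$ may range from $o(1)$ up to $n$, the saddle point $r$ can lie anywhere in $(0,\alpha_\Delta)$, and $\D{\eta_r}$ degenerates as $r\to 0$. The identity $\beta(r)=\D{\eta_r}+b$ is precisely what handles this, because the regime in which $\D{\eta_r}$ degenerates (namely $r$ small) corresponds to $w$ large and therefore $b$ close to $1$, so the additive term $b$ takes over just in time.
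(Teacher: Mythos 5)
Your proof is correct, and while it ultimately rests on the same underlying observation as the paper's proof---the saddle-point equation~\eqref{eq:saddle_point} forces $b:=rw/n$ to be large exactly when $r$ is small---it repackages the argument in a cleaner, more structured form. The paper argues by contradiction: if $\beta(r)\to 0$ then both nonnegative terms $r^2\gamma''_\Delta(r)/\gamma_\Delta(r)$ and $b(2-b)$ vanish, forcing $r\to 0$, whence the saddle-point equation gives $b\to 1$ and hence $\beta(r)\to 1$. Your key identity $\beta(r)=\D{\eta_r}+b$, obtained by substituting $h(r)=1-b$ from the saddle-point equation into the variance formula, is a genuine simplification: it makes both bounds transparent and direct. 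The upper bound is immediate from $r<2$ and $b<1$; the lower bound follows from the clean dichotomy $b\ge 1/2$ versus $b<1/2$, with the latter case pinning $r$ in a compact interval bounded away from $0$ (via $h$ strictly increasing, $h'(x)=\D{\eta_x}/x>0$), on which $\D{\eta_r}$ is continuous and strictly positive. Everything checks out: the formula $\D{\eta_x}=x^2\gamma''_\Delta(x)/\gamma_\Delta(x)+h(x)-h(x)^2$, the substitution yielding $\beta(r)=\D{\eta_r}+b$, the monotonicity of $h$, and the non-degeneracy of $\eta_r$ for $r>0$ are all correct. The probabilistic identity is arguably preferable to the paper's contradiction argument, since it gives explicit constants and handles the uniformity over $w\in(0,n]$ without any hand-waving.
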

\begin{proof}
First, $\beta(r)>0$ since $r<n/w$. Secondly, $\beta(r)=O(1)$ since $r=O(1)$ due to Claim~\ref{cl:cl1}. Finally, assume that $\beta(r)\to0$. Then, necessarily, $\frac{r^2\gamma''_{\Delta}(r)}{2\gamma_{\Delta}(r)}\to0$ and, 
therefore, $r\to0$. However, from~\eqref{eq:saddle_point}, in this case $r=(1+o(1))n/w$ implying that $\beta(r)=1+o(1)$, which contradicts the assumption that $\beta(r)\to0$. Thus, $\beta(r)=\Theta(1)$.
\end{proof}
We finally get
\begin{equation}
\frac{1}{2\pi}\int\limits_{-\theta_0}^{\theta_0}re^{i\theta}e^{g(re^{i\theta})}d\theta
=\frac{re^{g(r)+o(1)}}{2\pi}\int\limits_{-\theta_0}^{\theta_0}e^{-\frac{1}{2}\beta(r)n\theta^2}d\theta.
\label{eq:integral_part1}
\end{equation}

We now consider $\theta_0\le|\theta|\le\pi$. Let us show that $|e^{g(re^{i\theta})}|\le |e^{g(re^{i\theta_0})}|$ in this case. We have
\begin{equation}
\left|e^{g(re^{i\theta})}\right|
=\left|\frac{\gamma_{\Delta}(re^{i\theta})}{r}\right|^{n}
\cdot\left|e^{e^{i\theta}}\right|^{wr}
=\left|\frac{\gamma_{\Delta}(re^{i\theta})}{r}\right|^{n}
\cdot\left(e^{\cos\theta}\right)^{wr}\le \left|\frac{\gamma_{\Delta}(re^{i\theta})}{r}\right|^{n}\cdot\left|e^{e^{i\theta_0}}\right|^{wr}
\label{eq:g_abs}
\end{equation}
since $e^{\cos\theta}\le e^{\cos\theta_0}=|e^{e^{i\theta_0}}|$.
Next,
$$
|\gamma_{\Delta}(re^{i\theta})|^2
=\pr{\sum\limits_{s=0}^{\Delta-1}\frac{r^{s}e^{is\theta}}{s!}}
\pr{\sum\limits_{h=0}^{\Delta-1}\frac{r^{h}e^{-ih\theta}}{h!}}
=\sum\limits_{s,h=0}^{\Delta-1}\frac{r^{s+h}}{s!h!}\cos((s-h)\theta),
$$
thus,
\begin{align}
\left|\gamma_{\Delta}(re^{i\theta_0})\right|^2-\left|\gamma_{\Delta}(re^{i\theta})\right|^2
&=2\sum\limits_{s,h=0}^{\Delta-1}\frac{r^{s+h}}{s!h!}\sin\pr{(s-h)\frac{\theta+\theta_0}{2}}\sin\pr{(s-h)\frac{\theta-\theta_0}{2}}\notag\\
&=2\sum\limits_{s,h=0}^{\Delta-1}\frac{r^{s+h}}{s!h!}\sin\pr{|s-h|\frac{|\theta|+\theta_0}{2}}\sin\pr{|s-h|\frac{|\theta|-\theta_0}{2}}.
\label{eq:sinus}
\end{align}
Notice that in the above sum a summand can be negative only if, for some $m\in\mathbb{Z}$,
$$
|s-h|\frac{|\theta|-\theta_0}{2}
<\pi m <
|s-h|\frac{|\theta|+\theta_0}{2},
$$
in which case
$$
\pi m - \Delta\theta_0 <
\pi m - |s-h|\theta_0 <
|s-h|\frac{|\theta|-\theta_0}{2}<
|s-h|\frac{|\theta|+\theta_0}{2}<
\pi m + |s-h|\theta_0<
\pi m + \Delta\theta_0
$$
meaning that
$$
\left|\sin\pr{|s-h|\frac{|\theta|\pm\theta_0}{2}}\right|
\le\Delta\theta_0.
$$
Therefore,
$$
|\gamma_{\Delta}(re^{i\theta_0})|^2-|\gamma_{\Delta}(re^{i\theta})|^2
\ge 4r\sin\pr{\frac{|\theta|+\theta_0}{2}}\sin\pr{\frac{|\theta|-\theta_0}{2}}
-\sum\limits_{s,h=1}^{\Delta-1}\frac{r^{s+h}}{s!h!}\Delta^2\theta_0^2,
$$
which implies, for $\sqrt{\theta_0}<|\theta|\le\pi$, that
$$
\left|\gamma_{\Delta}(re^{i\theta_0})\right|^2-\left|\gamma_{\Delta}(re^{i\theta})\right|^2
\ge 4r\frac{\theta_0}{4}+O(r^2\theta_0^2)
=r\theta_0(1+O(r\theta_0))>0.
$$
While for $\theta_0\le|\theta|\le\sqrt{\theta_0}$,
$$
0\le|s-h|\frac{|\theta|\pm\theta_0}{2}
\le\Delta\sqrt{\theta_0}<\pi
$$
meaning that each term in the sum in the right-hand side of~\eqref{eq:sinus} is non-negative.
Thus, for $\theta_0\le|\theta|\le\pi$,
$$
\left|\gamma_{\Delta}(re^{i\theta_0})\right|^2-\left|\gamma_{\Delta}(re^{i\theta})\right|^2\ge0
$$
and, therefore, due to~\eqref{eq:g_abs} and~\eqref{eq:g-g-beta},
$$
\left|e^{g(re^{i\theta})}\right|
\le\left|e^{g(re^{i\theta_0})}\right|
=e^{g(r)-\frac{1}{2}\beta(r)n\theta_0^2+o(1)},
$$
which, in turn, imples
\begin{equation}
\left|\frac{1}{2\pi}\int\limits_{\theta_0}^{2\pi-\theta_0}re^{i\theta}e^{g(re^{i\theta})}d\theta\right|
\le\frac{r}{2\pi}\int\limits_{\theta_0}^{2\pi-\theta_0}\left|e^{g(re^{i\theta})}\right|d\theta
\le re^{g(r)-\frac{1}{2}\beta(r)n\theta_0^2+o(1)}
=re^{g(r)-\frac{1}{2}\beta(r)n^{1/5}+o(1)}.
\label{eq:integral_part2}
\end{equation}
Further, due to Claim~\ref{cl:cl2},
\begin{equation}
\frac{re^{g(r)}}{\pi}\int\limits_{\theta_0}^{\infty}e^{-\frac{1}{2}\beta(r)n\theta^2}d\theta
<\frac{re^{g(r)}}{\pi\theta_0}\int\limits_{\theta_0}^{\infty}\theta e^{-\frac{1}{2}\beta(r)n\theta^2}d\theta
=\frac{re^{g(r)+O(1)}}{n\theta_0}e^{-\frac{1}{2}\beta(r)n\theta_0^2}
<re^{g(r)-\frac{1}{2}\beta(r)n\theta_0^2}.
\label{eq:integral_infty}
\end{equation}
Finally,
\begin{align*}
[x^{n}]x\pr{\gamma_{\Delta}(x)}^{n}e^{xw}
&=\frac{1}{2\pi}\int\limits_{-\theta_0}^{\theta_0}re^{i\theta}e^{g(re^{i\theta})}d\theta
+\frac{1}{2\pi}\int\limits_{\theta_0}^{2\pi-\theta_0}re^{i\theta}e^{g(re^{i\theta})}d\theta\\
&\stackrel{\eqref{eq:integral_part1},\eqref{eq:integral_part2}}=\frac{re^{g(r)+o(1)}}{2\pi}\int\limits_{-\theta_0}^{\theta_0}e^{-\frac{1}{2}\beta(r)n\theta^2}d\theta
+O\left(re^{g(r)-\frac{1}{2}\beta(r)n^{1/5}}\right)\\
&\stackrel{\eqref{eq:integral_infty}}=\frac{re^{g(r)+o(1)}}{2\pi}\int\limits_{-\infty}^{+\infty}e^{-\frac{1}{2}\beta(r)n\theta^2}d\theta
+O\left(re^{g(r)-\frac{1}{2}\beta(r)n^{1/5}}\right)\\
&=\frac{re^{g(r)+o(1)}}{\sqrt{2\pi\beta(r)n}}
+O\left(re^{g(r)-\frac{1}{2}\beta(r)n^{1/5}}\right)\stackrel{\eqref{eq:g_def}}=\frac{(\gamma_{\Delta}(r))^{n}e^{wr+o(1)}}{r^{n-1}\sqrt{2\pi\beta(r)n}},
\end{align*}
completing the proof due to~\eqref{eq:forests_weighted_generating}.

\subsection{Proof of Lemma~\ref{lemma_labelled_trees_with_a_given_subgraph}}

Recall that a labelled tree on vertices $[n]$ can be encoded by a Pr\"{u}fer sequence from $[n]^{n-2}$. Let us fix some $m\in[n]$. Then the standard encoding procedure (see, e.g.,~\cite{Moon1970}) can be slightly modified to encode only trees with the independent set $[m]$ by two sequences $a\in\{m+1,\ldots,n\}^{m-1}$ and $b\in[n]^{n-m-1}$.
Indeed, take any such tree and successively prune its leaves, starting from leaves with smallest labels, as in the standard encoding procedure, until only one edge remains. If the pruned leaf has the label in $[m]$ and at least 2 elements remain in $[m]$, then write the label of its neighbour to the sequence $a$, otherwise, to the sequence $b$. We stop adding elements to $a$ when a single vertex remains in $[m]$. It is easy to see that this procedure gives bijection between the set of all trees with independent set $[m]$ and the set of codes. If we additionally fix the degree sequence $(d_1,\ldots,d_n)$,  then we require from codewords to contain each $i\in[n]$ exactly $d_i-1$ times. We stress that elements from $[m]$ belong to $b$ only. Thus, the number of such labelled trees on $[n]$ with no edges between the first $m$ vertices and with the degree sequence $\pr{d_1,\ldots,d_n}$ is exactly
\begin{multline}
t_{d_1,\ldots,d_n}(n;m)=\\
=\binom{n-m-1}{d_1-1,d_2-1,\ldots,d_m-1,n-1-\pr{d_1+\ldots+d_m}}
\binom{n+m-2-\pr{d_1+\ldots+d_m}}{d_{m+1}-1,\ldots,d_n-1}.
\label{eq_n_trees_with_independent_set_given_deg}
\end{multline}
Indeed, there are $\binom{n-m-1}{d_1-1,d_2-1,\ldots,d_m-1,n-1-\pr{d_1+\ldots+d_m}}$ ways to choose $d_i-1$ coordinates in $b$ that equal to $i$ for every $i\in[m]$. Then, it remains to choose $d_i-1$ coordinates in the concatenation $ab$ excluding the elements that belong to $[m]$ (this remaining codeword has length $n+m-2-\pr{d_1+\ldots+d_m})$, for every $i\in[n]\setminus[m]$.\\

Fix a forest $F$ on $[n]$ with $m$ non-empty components of sizes $\ell_1,\ldots,\ell_m$ and let $\ell=|V(F)|=\ell_1+\ldots+\ell_m$. 


\paragraph{Proof of the first part of Lemma~\ref{lemma_labelled_trees_with_a_given_subgraph}.} Any tree on $[n]$ containing $F$ as an induced subgraph can be uniquely constructed as follows:
\begin{itemize}
\item contract each component of $F$ into a single vertex;
\item label the contracted components by numbers from $[m]$ and label remaining vertices by numbers from $\{m+1,\ldots,n-\ell+m\}$;
\item construct a tree $T$ on $[n-\ell+m]$ with an independent set $[m]$; and
\item expand the vertices from $[m]$ (which are the contracted components) and choose ends of edges of $T$ in these expansions.
\end{itemize} 
Thus, applying \eqref{eq_n_trees_with_independent_set_given_deg}, we get
\begin{align}
t(n,F) &=\sum\limits_{d_1,\ldots,d_{n-\ell+m}=1}^{\infty}
\binom{n-\ell-1}{d_1-1,d_2-1,\ldots,d_m-1,n-\ell+m-1-\pr{d_1+\ldots+d_m}}\times\notag\\
&\quad\quad\quad\quad\quad\,\,\,\times\binom{n-\ell+2m-2-\pr{d_1+\ldots+d_m}}{d_{m+1}-1,\ldots,d_{n-\ell+m}-1}\ell_1^{d_1}\cdot\ldots\cdot\ell_m^{d_m}\notag\\
&=\sum\limits_{k_0,k_1,\ldots,k_m=0}^{\infty}
\binom{n-\ell-1}{k_0,k_1,\ldots,k_m}\ell_1^{k_1+1}\ldots\ell_m^{k_m+1}
\sum\limits_{k_{m+1},\ldots,k_{n-\ell+m}=0}^{\infty}
\binom{k_0+m-1}{k_{m+1},\ldots,k_{n-\ell+m}}\notag\\
&=\sum\limits_{k_0,k_1,\ldots,k_{m}=0}^{+\infty}
\binom{n-\ell-1}{k_0,k_1,\ldots,k_{m}}
\ell_1^{k_1+1}\ldots \ell_m^{k_m+1}(n-\ell)^{k_{0}+m-1}=\ell_1\ldots\ell_m n^{n-\ell-1}(n-\ell)^{m-1},
\label{f(k,l,r)}
\end{align}
where all summations are over valid sequences such that the respective multinomial coefficients are well-defined. This completes the proof of the first part of Lemma~\ref{lemma_labelled_trees_with_a_given_subgraph}.

\paragraph{Proof of the second part of Lemma~\ref{lemma_labelled_trees_with_a_given_subgraph}.}
Each rooted forest on $[n]$ with $h$ components containing $F$ as an induced subgraph can be uniquely constructed as follows:
\begin{itemize}
    \item contract each component of $F$ into a single vertex;
    \item label the contracted components by numbers from $[m]$ and label all the remaining vertices by numbers from $\{m+1,\ldots,n-\ell+m\}$;
    \item add one more vertex with label $n-\ell+m+1$;
    \item construct a tree $T$ on $[n-\ell+m+1]$ with $\deg(n-\ell+m+1)=h$;
    \item expand the vertices in $[m]$ and choose ends of edges of $T$ in these expansions;
    \item remove the vertex $n-\ell+m+1$.
\end{itemize}
Thus, applying \eqref{eq_n_trees_with_independent_set_given_deg},
\begin{align*}
f(n,h,F) & =\sum\limits_{d_1,\ldots,d_{n-\ell+m}=1}^{\infty}
\binom{n-\ell}{d_1-1,d_2-1,\ldots,d_m-1,n-\ell+m-\pr{d_1+\ldots+d_m}}\times\\
&\quad\quad\quad\quad\quad\,\,\,\times\binom{n-\ell+2m-1-\pr{d_1+\ldots+d_m}}{d_{m+1}-1,\ldots,d_{n-\ell+m}-1,h-1}\ell_1^{d_1}\ldots\ell_m^{d_m}\\
&=\sum\limits_{k_0,k_1,\ldots,k_{m}=0}^{\infty}
\binom{n-\ell}{k_0,k_1,\ldots,k_m}\ell_1^{k_1+1}\ldots\ell_m^{k_m+1}\times\\
&\quad\quad\quad\quad\quad\,\,\,\times
\sum\limits_{k_{m+1},\ldots,k_{n-\ell+m}=0}^{\infty}
\binom{k_0+m-1}{k_{m+1},\ldots,k_{n-\ell+m},h-1}\\
&=\ell_1\ldots\ell_m
\sum\limits_{k_0=0}^{n-\ell}
\binom{n-\ell}{k_0}\ell^{n-\ell-k_0}
\binom{k_{0}+m-1}{h-1}(n-\ell)^{k_{0}+m-h},
\end{align*}
where all summations are over valid sequences such that the respective multinomial coefficients are well-defined. This completes the proof of the second part of Lemma~\ref{lemma_labelled_trees_with_a_given_subgraph}.


\section{Auxiliary claims}
\label{sc:auxiliary}

\subsection{Proof of Lemma~\ref{lm:a_Delta}}

First, notice that
$$
\alpha_{\Delta}=\frac{\gamma_{\Delta}(\alpha_{\Delta})}{\gamma_{\Delta-1}(\alpha_{\Delta})}=1+\frac{\alpha_{\Delta}^{\Delta-1}}{(\Delta-1)!\sum_{k=0}^{\Delta-2}\alpha_{\Delta}^k/k!}<
1+\frac{\alpha_{\Delta}^{\Delta-1}}{(\Delta-1)!\alpha_{\Delta}^{\Delta-2}/(\Delta-2)!}=1+\frac{\alpha_{\Delta}}{\Delta-1},
$$
therefore, for all $\Delta\ge3$, $1<\alpha_{\Delta}<\frac{\Delta-1}{\Delta-2}\le2.$ Moreover, $\lim_{\Delta\to\infty}\alpha_{\Delta}=1$. Applying Tannery's theorem about interchanging of the limit and the summation operations, we get
$$
\lim\limits_{\Delta\to\infty}a_{\Delta}=
\lim\limits_{\Delta\to\infty}\sum\limits_{k=0}^{\Delta-2}\frac{\alpha_{\Delta}^k}{k!}=
\sum\limits_{k=0}^{\infty}\lim\limits_{\Delta\to\infty}\frac{\alpha_{\Delta}^k}{k!}=
\sum\limits_{k=0}^{\infty}\frac{1}{k!}=e.
$$
The fact that $a_{\Delta}$ is non-decreasing follows from \eqref{eq:t_Delta_final_asymp} and an obvious observationt that $t_{\Delta}(n)$ is non-decreasing in $\Delta$.

\subsection{Trees with a fixed subforest}

Fix a forest $F$ on $[n]$ with $m$ non-empty components of sizes $\ell_1,\ldots,\ell_m$ and let $\ell=|V(F)|=\ell_1+\ldots+\ell_m$. 

\begin{claim}
Let $T$ be a tree on $[n]$ with maximum degree $\Delta$ that contains $F$ as an induced subgraph. Then $m\leq 1+(n-\ell)(\Delta-1)$.
\label{cl:cl3}
\end{claim}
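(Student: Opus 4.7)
\textbf{Proof proposal for Claim~\ref{cl:cl3}.}
The plan is to contract the components of $F$ and use a simple edge-counting argument in the resulting tree. Let $C_1,\ldots,C_m$ denote the components of $F$, and let $U=[n]\setminus V(F)$, so $|U|=n-\ell$. Since each $C_i$ is connected in $T$ (being a component of the induced subgraph $F$, which coincides with $T[V(F)]$), contracting each $C_i$ to a single vertex $c_i$ inside $T$ preserves the tree structure: contracting a connected subtree of a tree yields a tree, and doing this for vertex-disjoint connected subtrees yields a tree as well. Call the resulting tree $T'$; it has vertex set $\{c_1,\ldots,c_m\}\cup U$ of size $m+(n-\ell)$ and therefore $m+(n-\ell)-1$ edges.

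The key structural observation is that $\{c_1,\ldots,c_m\}$ is an independent set in $T'$. Indeed, an edge $c_ic_j$ in $T'$ with $i\neq j$ would correspond to an edge of $T$ between $V(C_i)$ and $V(C_j)$; but both endpoints lie in $V(F)$ and $F$ is an induced subgraph of $T$, so such an edge would have to belong to $F$, contradicting the fact that $C_i$ and $C_j$ are distinct components of $F$. Consequently, every edge of $T'$ has at least one endpoint in $U$.

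Next I claim that each $v\in U$ has $\deg_{T'}(v)\leq\Delta$. One has $\deg_{T'}(v)\leq\deg_T(v)\leq\Delta$: if $v$ had two $T$-neighbours inside the same component $C_i$, then those two neighbours would be joined both via $v$ and via the path inside $C_i$, creating a cycle in $T$; hence contraction does not merge any two edges at $v$, and $\deg_{T'}(v)=\deg_T(v)$. Summing over $v\in U$ and using that every edge of $T'$ is incident to $U$,
\[
m+(n-\ell)-1 \;=\; |E(T')| \;\leq\; \sum_{v\in U}\deg_{T'}(v) \;\leq\; (n-\ell)\Delta.
\]
Rearranging yields $m\leq 1+(n-\ell)(\Delta-1)$, which is the desired bound. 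There is no real obstacle here; the only point that needs a line of justification is that contracting the components of $F$ inside $T$ produces a tree with the contracted vertices forming an independent set, and this is immediate from $F$ being induced.
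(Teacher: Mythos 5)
Your proof is correct and follows essentially the same approach as the paper's: contract the components of $F$ inside $T$, observe that the contracted vertices form an independent set in the resulting tree $T'$, and then compare the number of edges of $T'$ to the degree sum over $U=[n]\setminus V(F)$ using $\deg_{T'}(v)\leq\Delta$. The paper phrases the edge count via the full degree sequence of $T'$ rather than by noting that every edge is incident to $U$, but the underlying computation and the conclusion $\Delta(n-\ell)\geq n-1-\ell+m$ are identical.
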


\begin{proof}
Let us consider an auxiliary tree $T'$ that is obtained from $T$ by contracting each component of $F$ into a single vertex. Without loss of generality, assume that $T'$ has $V(T')=[n-\ell+m]$ and that $[m]$ contains all contracted vertices. Let $(d_1,\ldots,d_{n-\ell+m})$ be the degree sequence of $T'$. Then $\sum_{j=1}^{n-\ell+m} d_j=2(n-1-\ell+m)$. Moreover, $d_1+\ldots+d_m\leq n-1-\ell+m$ since there are no edges between the vertices from $[m]$ in $T'$. Therefore, $\sum_{j=m+1}^{n-\ell+m}d_j\geq n-1-\ell+m$. On the other hand, each $d_j$ is at most $\Delta$. Thus, $\Delta(n-\ell)\geq n-1-\ell+m$, completing the proof.
\end{proof}


\subsection{Rooted forests}

Here we prove upper and lower bounds on $f_{\Delta}(n,m)$. Although the asymptotics of its weighted sum is already stated in Theorem~\ref{lemma_tree_of_bounded_degree}, bounds that we prove here will be also useful later.

\begin{claim}
There exist $0<c_1<c_2$ such that, for all positive integer $n$ and $m\leq n$,
$$
c_1^m\pr{\frac{a_{\Delta}}{e}}^{n}\binom{n-1}{m-1}n^{n-m}\leq
f_{\Delta}(n,m)\leq
c_2^m\pr{\frac{a_{\Delta}}{e}}^{n}\binom{n-1}{m-1}n^{n-m}.
$$
\label{cl:rooted_forests_bounds}
\end{claim}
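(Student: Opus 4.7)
The plan is to reduce this claim to a two-sided estimate on a coefficient of $(\gamma_\Delta(x))^n$. From the derivation in the proof of part~2 of Theorem~\ref{lemma_tree_of_bounded_degree} we have the exact formula $f_\Delta(n,m)=\binom{n-1}{m-1}(n-m)!\,[x^{n-m}](\gamma_\Delta(x))^n$, so it suffices to show that
$$c_1^m\pr{\frac{a_\Delta}{e}}^n n^{n-m}\le (n-m)!\,[x^{n-m}](\gamma_\Delta(x))^n\le c_2^m\pr{\frac{a_\Delta}{e}}^n n^{n-m}$$
uniformly over $1\le m\le n$.

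For both bounds I would reuse the probabilistic framework from Section~\ref{sc:counting}: for $\alpha>0$, let $\xi_\alpha$ have generating function $\gamma_\Delta(\alpha x)/\gamma_\Delta(\alpha)$ and let $S_n$ be a sum of $n$ i.i.d.\ copies of $\xi_\alpha$. The same calculation as in the proof of part~1 of Theorem~\ref{lemma_tree_of_bounded_degree} yields
$$(n-m)!\,[x^{n-m}](\gamma_\Delta(x))^n=(n-m)!\,\frac{(\gamma_\Delta(\alpha))^n}{\alpha^{n-m}}\,\Pb{S_n=n-m}.$$
For the upper bound I would fix $\alpha=\alpha_\Delta$, use the identity $\gamma_\Delta(\alpha_\Delta)=\alpha_\Delta a_\Delta$, apply the uniform local limit theorem for sums of bounded i.i.d.\ integer-valued random variables to get $\Pb{S_n=k}\le C/\sqrt{n}$ for every $k$, and combine with Stirling $(n-m)!\le e\sqrt{n-m}\,(n-m)^{n-m}e^{-(n-m)}$ together with $(n-m)^{n-m}\le n^{n-m}$. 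After simplification (noting that $a_\Delta^n/e^{n-m}=(a_\Delta/e)^n e^m$) the bound takes the desired form with $c_2$ depending only on $\Delta$.

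For the lower bound I would instead choose $\alpha=\alpha(m,n)>0$ satisfying $\alpha\gamma_{\Delta-1}(\alpha)/\gamma_\Delta(\alpha)=(n-m)/n$, so that $\mathbb{E} S_n=n-m$. The local limit theorem applied at the mean yields $\Pb{S_n=n-m}\ge c/\sqrt{n\,\mathrm{Var}\,\xi_\alpha}$. Combined with Stirling's lower bound on $(n-m)!$ and the defining equation of $\alpha(m,n)$, the $n$-dependent factors cancel and we obtain a lower bound of the desired form $c_1^m(a_\Delta/e)^n n^{n-m}$.

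The main obstacle is uniformity in $m$. When $m/n\to 1$ (so $k=n-m$ is small) the saddle point $\alpha(m,n)\to 0$, the variance $\mathrm{Var}\,\xi_\alpha\to 0$, and the standard local limit theorem becomes delicate since $S_n$ is approximately Poisson with bounded mean. In this regime I would either apply the Poisson approximation directly so that $\Pb{S_n=n-m}\approx e^{-(n-m)}(n-m)^{n-m}/(n-m)!$ produces the required bound after cancellation, or argue combinatorially by lower-bounding $f_\Delta(n,m)$ by the number of rooted forests consisting of $m-1$ isolated vertices and one tree on $n-m+1$ vertices, which is in turn controlled via part~1 of Theorem~\ref{lemma_tree_of_bounded_degree}. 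The opposite boundary $m\to 1$ is covered by the same theorem directly.
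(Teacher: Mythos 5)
Your reduction to the coefficient $(n-m)!\,[x^{n-m}](\gamma_\Delta(x))^n$ and the exponential-tilting identity are correct, and your upper-bound argument (tilt at the fixed $\alpha_\Delta$, then use the uniform bound $\Pb{S_n=k}\le C/\sqrt{n}$) does go through cleanly. However, the route you take is genuinely different from the paper's, and your lower-bound plan has a real gap: you need a local limit theorem that is uniform over the moving tilt parameter $\alpha(m,n)\in(0,\alpha_\Delta]$, i.e.\ over a triangular array of distributions whose variance degenerates as $k=n-m$ shrinks. The standard (Gnedenko-type) LLT you invoke does not cover this, and your proposed fixes (Poisson approximation for $k$ small, LLT for $k$ large) leave an intermediate regime $k\to\infty$ with $k=o(n)$ where neither applies in the off-the-shelf form; moreover the proposed combinatorial fallback (forcing $m-1$ isolated vertices plus one tree on $k+1$ vertices) must itself be checked against the target bound, and the required inequality $c_1^m\le c\,(e/a_\Delta)^{m-1}((k+1)/n)^{k-1}$ is not obviously valid across the full range of $(m,n)$ without further case analysis.

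The paper avoids all of this with a short combinatorial reduction that never touches the local limit theorem or the saddle point at all. It first records two-sided bounds $c_1(a_\Delta/e)^s s^{s-1}\le t^*_\Delta(s)\le c_2(a_\Delta/e)^s s^{s-1}$ on the number $t^*_\Delta(s)$ of rooted trees with maximum degree $\le\Delta$ and root degree $\le\Delta-1$; these follow from part~1 of Theorem~\ref{lemma_tree_of_bounded_degree} together with the easy observation $\tfrac13 s\,t_\Delta(s)\le t^*_\Delta(s)\le s\,t_\Delta(s)$. Then it expresses $f_\Delta(n,m)=\frac{n!}{m!}[x^n]\bigl(\sum_{s\ge1}t^*_\Delta(s)x^s/s!\bigr)^m$ via the exponential formula, replaces $t^*_\Delta(s)$ by its upper (resp.\ lower) bound termwise, factors out $c_2^m(a_\Delta/e)^n$ (resp.\ $c_1^m(a_\Delta/e)^n$), and recognises the remaining coefficient as the number of \emph{unrestricted} rooted forests on $[n]$ with $m$ components, which is exactly $\binom{n-1}{m-1}n^{n-m}$. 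This buys uniformity in $m$ for free, because the $m$-dependence is carried by $c_i^m$ and the known exact count, rather than by an analytic estimate whose error must be controlled as $m$ varies. Your analytic plan could likely be completed, but it would require redoing a saddle-point/LLT argument with explicit uniform error bounds --- essentially repeating the analysis of part~2 of Theorem~\ref{lemma_tree_of_bounded_degree} --- whereas the paper's reduction makes the claim a one-paragraph corollary of part~1.
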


\begin{proof}

Let $t^*_{\Delta}(n)\leq nt_{\Delta}(n)$ be the number of {\it rooted} trees on $[n]$ with maximum degree at most $\Delta$ such that the root has degree at most $\Delta-1$. Note that, in a tree with maximum degree at most $\Delta$, the number of vertices with degree exactly $\Delta$ is at most $2n/\Delta\leq 2n/3$. Thus, $t^*_{\Delta}(n)\geq \frac{1}{3}nt_{\Delta}(n)$. We also let $t^{**}_{\Delta}(n)=nt_{\Delta}(n)$ be the number of {\it rooted} trees on $[n]$ with maximum degree at most $\Delta$ (and without any restriction on the choice of the root). In a similar way, we let $f^{**}_{\Delta}(n,m)$ be the number of labelled rooted forests on $[n]$ with $m$ components and maximum degree at most $\Delta$ (and, again, without any restriction on the choice of the roots).

From Theorem~\ref{lemma_tree_of_bounded_degree}, we get that there are positive constants $c_1,c_2$ such that, for all $n$, 
\begin{equation}
c_1\pr{a_{\Delta}/e}^n n^{n-1}\leq t^*_{\Delta}(n)\leq c_2\pr{a_{\Delta}/e}^n n^{n-1}.
\label{eq:trees_rooted_bounds}
\end{equation} 
Since a rooted forest with $m$ components can be viewed as an unordered partition of the set of vertices into $m$ subsets with a rooted tree on each subset, $f_{\Delta}(n,m)$ can be expressed 
as follows:
\begin{align*}
f_{\Delta}(n,m)
&=\frac{n!}{m!}[x^n]\pr{\sum\limits_{s=1}^{\infty}\frac{t^*_{\Delta}(s)x^s}{s!}}^m\stackrel{(*)}\leq
\frac{n!}{m!}[x^n]\pr{\sum\limits_{s=1}^{\infty}\frac{c_2\pr{\frac{a_{\Delta}}{e}}^{s}s^{s-1}x^s}{s!}}^m\\
&=\frac{c_2^mn!}{m!}[x^n]\pr{\sum\limits_{s=1}^{\infty}\frac{s^{s-1}}{s!}\pr{\frac{a_{\Delta}}{e}x}^{s}}^m=
c_2^m\left(\frac{a_\Delta}{e}\right)^n \frac{n!}{m!}[x^n]\left(\sum_{s=1}^{\infty} \frac{s^{s-1}}{s!}x^s\right)\\
&=c_2^m\left(\frac{a_\Delta}{e}\right)^n \frac{n!}{m!}[x^n]\left(\sum_{s=1}^{\infty} \frac{t^{**}_s(s)}{s!}x^s\right)\\
&=c_2^m\pr{\frac{a_{\Delta}}{e}}^{n}f^{**}_n(n,m)\stackrel{(**)}=c_2^m\pr{\frac{a_{\Delta}}{e}}^{n}\binom{n-1}{m-1}n^{n-m},
\end{align*}
where the equality (**) holds true since $f^{**}_n(n,m)=\binom{n-1}{m-1}n^{n-m}$, see~\cite[Equation (4.2)]{Moon1970}. This completes the proof of the upper bound in Claim~\ref{cl:rooted_forests_bounds}. The proof of the lower bound is literally the same, we only need to replace the inequality (*) with the opposite inequality and $c_2$ with $c_1$ due to~\eqref{eq:trees_rooted_bounds}.
\end{proof}

\begin{remark}
It is easy to see that the same proof can be applied show that the statement of Claim~\ref{cl:rooted_forests_bounds} also holds for the number of rooted forests  on $[n]$ with $m$ components, maximum degree at most $\Delta$, and without any restriction on the choice of the roots.
\end{remark}

\subsection{Maximum value of an auxiliary function}

Let $y_1,y_2,y_3>0$. Consider the following function $\rho$ defined in $(0,\infty)$:
\begin{equation}\label{g}
\rho(x):=\rho_{y_1,y_2,y_3}(x)=\pr{\frac{y_1 y_2^{x}}{x^{y_3}}}^{x}.
\end{equation}
We will use the following technical claim.
\begin{claim}
For any $b\ge a >0$,
\begin{equation}
\label{eq:function_g_upper_bound_0}
\max\limits_{a\le x \le b}\rho(x)
\le\max\br{\exp\pr{\frac{y_3}{2}y_1^{1/y_3}},\rho(a),\rho(b)}.
\end{equation}
\label{cl:rho}
\end{claim}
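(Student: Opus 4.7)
The plan is to work with $h(x):=\ln\rho(x)=x\ln y_1+x^2\ln y_2-y_3\,x\ln x$, which is smooth on $(0,\infty)$. Since $\rho$ is continuous and positive on the compact interval $[a,b]$, its maximum there is attained either at an endpoint or at an interior critical point $x^*\in(a,b)$ of $h$. The endpoint cases already appear on the right-hand side of \eqref{eq:function_g_upper_bound_0}, so the task reduces to establishing the bound $h(x^*)\le\tfrac{y_3}{2}\,y_1^{1/y_3}$ at every $x^*>0$ with $h'(x^*)=0$.

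The key computational step is to use the critical-point equation
$$h'(x^*)=\ln y_1+2x^*\ln y_2-y_3\ln x^*-y_3=0$$
to eliminate $y_2$ from $h(x^*)$. Multiplying this identity by $x^*/2$ expresses $(x^*)^2\ln y_2$ purely in terms of $x^*$, $\ln y_1$ and $\ln x^*$, and substituting into $h(x^*)$ collapses the formula to the closed form
$$h(x^*)=\tfrac{x^*}{2}\bigl(y_3+\ln y_1-y_3\ln x^*\bigr),$$
which no longer involves $y_2$ at all.

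It then suffices to maximise the single-variable function $g(u):=\tfrac{u}{2}(y_3+\ln y_1-y_3\ln u)$ over $u>0$, forgetting any constraint that $u$ arose from a critical point. A routine differentiation gives $g'(u)=\tfrac{1}{2}(\ln y_1-y_3\ln u)$, whose unique zero is $u_0:=y_1^{1/y_3}$, and $g''(u)=-y_3/(2u)<0$, so $u_0$ is a global maximum and a direct substitution yields $g(u_0)=\tfrac{y_3}{2}\,y_1^{1/y_3}$. Exponentiating and combining with the endpoint cases completes the proof.

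I do not foresee any serious obstacle beyond this bookkeeping. The mild subtlety worth highlighting is that $h''(x)=2\ln y_2-y_3/x$ can change sign on $(0,\infty)$ when $y_2>1$, so $h$ may admit more than one critical point on $(a,b)$; however, the bound derived above holds at \emph{every} critical point uniformly, regardless of whether it is a local maximum, minimum, or inflection, so no case analysis on the shape of $h$ is required.
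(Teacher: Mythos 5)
Your proposal is correct and takes essentially the same approach as the paper: both derive the same closed-form expression $\tfrac{x^*}{2}\bigl(y_3+\ln y_1-y_3\ln x^*\bigr)$ for $\ln\rho$ at critical points (the paper writes it as $\rho(x^*)=\bigl(y_1(e/x^*)^{y_3}\bigr)^{x^*/2}$, which is the same thing), then maximise that expression over all positive $x^*$, obtaining the bound at $x^*=y_1^{1/y_3}$. Your version works with $\ln\rho$ throughout and checks the second-derivative sign explicitly, which the paper leaves implicit, but the argument is the same.
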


\begin{proof}
If the equation $\frac{\partial}{\partial{x}}\ln{\rho(x)}=0$ has no solutions, then $\max_{a\le x\le b}\rho(x)=\max\br{\rho(a),\rho(b)},$ implying~\eqref{eq:function_g_upper_bound_0}. Now, assume that this equation has at least one solution in $[a,b]$, and let $\mathcal{C}\subset[a,b]$ be the set of solutions. Since
$$
\frac{\partial}{\partial{x}}\ln{\rho(x)}=\frac{\partial}{\partial{x}} x (-y_3\ln x + \ln y_1 + x \ln y_2)=
-y_3\ln x-y_3+\ln y_1+2x\ln y_2=\ln\pr{\frac{y_1y_2^{2x}}{(ex)^{y_3}}},
$$
for each $x\in\mathcal{C}$,
$$
\pr{\frac{y_1y_2^x}{x^{y_3}}}^2=y_1\pr{\frac{e}{x}}^{y_3} \quad\Leftrightarrow\quad
\rho(x)=\left(y_1\pr{\frac{e}{x}}^{y_3}\right)^{x/2}.
$$
Since
$$
\frac{\partial}{\partial{x}}\ln\left(y_1\pr{\frac{e}{x}}^{y_3}\right)^{x/2}=
\frac{\partial}{\partial{x}}\left(\frac{x}{2}\ln y_1+\frac{x}{2}y_3(1-\ln x)\right)=
\frac{\ln y_1}{2}-\frac{y_3\ln x}{2}=
\frac{1}{2}\ln\pr{\frac{y_1}{x^{y_3}}}=0
$$
has exactly one positive root $x^*=y_1^{1/y_3}$,
$$
\max\limits_{x\in\mathcal{C}}\rho(x)
\le\max\limits_{x>0}\left(y_1\pr{\frac{e}{x}}^{y_3}\right)^{x/2}=\left(y_1\pr{\frac{e}{x^*}}^{y_3}\right)^{x^*/2}
=\exp\pr{\frac{y_3}{2}y_1^{1/y_3}},
$$
which implies \eqref{eq:function_g_upper_bound_0}.
\end{proof}

\section{Induced subgraphs in dense random graphs}
\label{sc:const_p}

Let $p=\op{const}$, $3\leq\Delta=\op{const}$, $q=1/(1-p)$.

\subsection{Trees}
\label{sub_indiced_subtrees}

Let $Y_k$ be the number of induced trees in $G(n,p)$ on $k$ vertices and maximum degree bounded above by $\Delta$.
Then 
\begin{equation}
\mathbf{T}_{\Delta}=\max\setdef{k\in\irange{1}{n}}{Y_k>0}.
\label{eq:reduction_from_Y_to_T}
\end{equation}
For $k=O\pr{\ln{n}}$, applying the first part of Theorem~\ref{lemma_tree_of_bounded_degree}, we get
\begin{align}
\mathbb{E}Y_k
&=\binom{n}{k}(1-p)^{\binom{k}{2}-(k-1)}p^{k-1}t_{\Delta}(k)=\label{eq:expect_Y_dense}\\
&=\exp\pr{k\ln{n}-\frac{5}{2}\ln{k}+k\ln{a_{\Delta}}-\binom{k}{2}\ln{q}+k\ln{(pq)}+O(1)}=\\
&=\exp\pr{\frac{k}{2}\ln{q}\pr{2\log_{q}\pr{a_{\Delta}np}+3-k}-\frac{5}{2}\ln{k}+O(1)}.\notag
\end{align}
Fix arbitrary $\varepsilon>0$. For $k=\floor{2\log_{q}\pr{a_{\Delta}np}+3-\varepsilon}$:
$$
\mathbb{E}Y_{k}
>\exp\pr{\frac{k}{2}\varepsilon\ln{q}-\frac{5}{2}\ln{k}+O(1)}\to\infty\quad\text{as}\quad n\to\infty.
$$
Furthermore, for $k=\ceil{2\log_{q}\pr{a_{\Delta}np}+3}$:
\begin{equation}
\mathbb{E}Y_{k}\le\exp\pr{-\frac{5}{2}\ln{k}+O(1)}=o(1).
\label{eq:Markov_trees_dense}
\end{equation}
By Markov's inequality, whp there are no induced trees in $G(n,p)$ on $\ceil{2\log_{q}\pr{a_{\Delta}np}+3}$ vertices. As a consequence, there are also no larger induced trees whp.

We further assume that $k=\floor{2\log_{q}\pr{a_{\Delta}np}+3-\varepsilon}$. Let us estimate the variance $\D{Y_k}$:
$$
 \mathbb{E}Y_k(Y_k-1)-\pr{\mathbb{E}Y_k}^2\le\sum\limits_{\ell=2}^{k-1}{F_{\ell}},
$$
where $F_{\ell}$ is the expected value of the number of (ordered) pairs of trees with maximum degrees at most $\Delta$ and of
size $k$ with $\ell$ common vertices. Each such pair of trees can be obtained as follows:
\begin{itemize}
\item choose a tree $T_1$ on $k$ vertices;
\item choose an induced subgraph $F\subset{T_1}$ on $\ell$ vertices;
\item choose another tree $T_2$ on $k$ vertices containing $F$ as an induced subgraph. 
\end{itemize}
Letting 
$$
 t_{\Delta}(k,\ell,r):=\max\limits_{\text{forest }F\text{ on $[k]$: }\abs{V(F)}=\ell,\,\abs{E(F)}=r}t_{\Delta}(k,F),
$$
where $t_{\Delta}(k,F)$ is the number of trees on $[k]$ with maximum degree at most $\Delta$ and containing $F$ as an induced subforest, we get
$$
F_{\ell}\leq\binom{n}{k}t_{\Delta}(k)\binom{k}{\ell}\binom{n-k}{k-\ell}
\max\limits_{r\in\irange{0}{\ell-1}}t_{\Delta}(k,\ell,r)p^{2(k-1)-r}(1-p)^{2\binom{k}{2}-\binom{\ell}{2}-2(k-1)+r}.
$$
Therefore,
\begin{equation}
\frac{F_\ell}{\pr{\mathbb{E}Y_k}^2}\leq
\frac{\binom{k}{\ell}\binom{n-k}{k-\ell}q^{\binom{\ell}{2}}
\max\limits_{r\in\irange{0}{\ell-1}}(pq)^{-r}t_{\Delta}(k,\ell,r)}{\binom{n}{k}t_{\Delta}(k)}.
\label{eq:F_ell_trees_dense}
\end{equation}
Assume that $n$ is sufficiently large and consider the following $3$ cases:
\begin{enumerate}
    \item $2\le{\ell}\le{k}-7\log_{q}k$;
    \item $k-7\log_{q}k<\ell<k-c$;
    \item $k-c\le{\ell}\le{k-1}$,
\end{enumerate}
where $c$ is a positive large enough integer constant.

\paragraph{Case $1$.} Let $2\le{\ell}\le{k}-7\log_{q}k$.  Obviously, $t_{\Delta}(k,\ell,r)\le{t_{\Delta}(k)}$. Then, due to~\eqref{eq:F_ell_trees_dense},
\begin{align*}
\frac{F_\ell}{\pr{\mathbb{E}Y_k}^2}\le
\frac{\binom{k}{\ell}\binom{n-k}{k-\ell}q^{\binom{\ell}{2}}
(pq)^{-\ell}}{\binom{n}{k}}
&=
\frac{O(1)\left(\frac{ek}{\ell}\right)^{\ell}\left(\frac{en}{k-\ell}\right)^{k-\ell}q^{\binom{\ell}{2}}
\min\{1,pq\}^{-\ell}}{(en/k)^k}\\
&=
\frac{O(1)k^{2\ell}\left(\frac{k}{k-\ell}\right)^{k-\ell}q^{\binom{\ell}{2}}
(1+1/pq)^{\ell}}{\ell^{\ell}n^{\ell}}\leq
\pr{\frac{k^2}{n}(1+1/pq)q^{\frac{\ell}{2}}}^\ell. 
\end{align*}
Since $q^{\ell/2}\leq q^{k/2-3.5\log_qk}=O\left(nk^{-3.5}\right)$, we get
\begin{equation}
\frac{\sum\limits_{\ell=2}^{\floor{{k}-7\log_{q}k}}{F_\ell}}{\pr{\mathbb{E}Y_k}^2}\le
\sum\limits_{\ell=1}^{\infty}{\pr{\frac{1+1/pq}{k}}^\ell}=
\frac{1}{\frac{k}{1+1/pq}-1}=o(1).
\label{eq:Chebyshev_dense_trees_1}
\end{equation}

\paragraph{Case $2$.} Let $k-7\log_{q}k<\ell<k-c$. Here, we apply Lemma~\ref{lemma_labelled_trees_with_a_given_subgraph} and get
\begin{align*}
 t_{\Delta}(k,\ell,r)\leq t_k(k,\ell,r) &\leq\max_{\ell_1,\ldots,\ell_{\ell-r}\geq 2:\,\ell_1+\ldots+\ell_{\ell-r}=\ell}\ell_1\ldots\ell_{\ell-r} k^{k-\ell-1}(k-\ell)^{\ell-r-1}\\
 &\leq\pr{\frac{\ell}{\ell-r}}^{\ell-r}{k^{k-\ell-1}}(k-\ell)^{\ell-r-1}.
\end{align*}
Letting
$$
g(k,\ell,r):=(pq)^{-r}\pr{\frac{\ell}{\ell-r}}^{\ell-r}{k^{k-\ell-1}}(k-\ell)^{\ell-r-1},
$$
we get
$$
\frac{\partial}{\partial{r}}\ln(g(k,\ell,r))=\ln\pr{\frac{e}{pq}\cdot\frac{\ell-r}{\ell(k-\ell)}}.
$$
Thus $g(k,\ell,r)$ as a function of $r$ achieves its maximum on $(-\infty,\ell)$ at $r^*=\ell\cdot\pr{1-\frac{pq}{e}(k-\ell)}.$ If $\ell<k-\frac{e}{pq}$, then $r^*<0$. Since we may choose $c$ as large as we want, we assume that $c>\frac{e}{pq}$. Then
$$
\max\limits_{r\in\irange{0}{\ell-1}}g(k,\ell,r)=g(k,\ell,0)={k^{k-\ell-1}}(k-\ell)^{\ell-1}
$$
and, therefore, due to~\eqref{eq:F_ell_trees_dense},
\begin{align*}
\frac{F_\ell}{\pr{\mathbb{E} Y_k}^2}
&\leq
\frac{\binom{k}{\ell}\binom{n-k}{k-\ell}q^{\binom{\ell}{2}}
{k^{k-\ell-1}}(k-\ell)^{\ell-1}}{\binom{n}{k}t_{\Delta}(k)}
\leq\frac{\pr{\frac{ek}{\ell}}^\ell\frac{n^{k-\ell}}{(k-\ell)!}q^{\frac{\ell^2}{2}}
{k^{k-\ell-1}}(k-\ell)^{\ell-1}}{\frac{n^k}{k!}\pr{\frac{a_{\Delta}}{e}}^{k}k^{k-2}}\\
&\leq
\frac{k}{k-\ell}\pr{\frac{e}{a_{\Delta}}}^{k}
\pr{\frac{ek(k-\ell)}{n\ell}q^{\frac{\ell}{2}}}^\ell\leq
e^k\pr{\frac{ek(k-\ell)}{n\ell}q^{\frac{\ell}{2}}}^\ell.
\end{align*}
It is easy to check that
$$
\argmax\limits_{\ell\in(k-7\log_{q}k,k)}{\frac{ek(k-\ell)}{n\ell}q^{\frac{\ell}{2}}}=k-\frac{2}{\ln{q}}+o(1)
$$
and, assuming that $c>\frac{2}{\ln{q}}$, for $\ell\in\pr{k-7\log_{q}k,k-c}$,
$$
\frac{\partial}{\partial{\ell}}\ln\pr{\frac{ek(k-\ell)}{n\ell}q^{\frac{\ell}{2}}}=\frac{\ln{q}}{2}-\frac{k}{\ell(k-\ell)}>0.
$$
Thus,
$$
\frac{F_\ell}{\pr{\mathbb{E}Y_k}^2}\leq
e^k\pr{\frac{eck}{n(k-c)}q^{\frac{k-c}{2}}}^\ell\leq
\pr{\frac{e^3c}{n}q^{\frac{2\log_{q}(a_{\Delta}np)+3-c}{2}}}^\ell=
\pr{e^3c{a_{\Delta}p}q^{\frac{3-c}{2}}}^\ell.
$$
Obviously, $e^3c{a_{\Delta}p}q^{\frac{3-c}{2}}<1$ for sufficiently large (constant) $c$. Therefore,
\begin{equation}
\frac{\sum\limits_{\ell=\floor{{k}-7\log_{q}k}+1}^{k-c-1}{F_\ell}}{\pr{\mathbb{E}Y_k}^2}\leq
\sum\limits_{\ell=\floor{{k}-7\log_{q}k}+1}^{+\infty}{\pr{e^3c{a_{\Delta}p}q^{\frac{3-c}{2}}}^\ell}\leq
\frac{\pr{e^3c{a_{\Delta}p}q^{\frac{3-c}{2}}}^{{k}-7\log_{q}k}}{1-e^3c{a_{\Delta}p}q^{\frac{3-c}{2}}}=o(1).
\label{eq:Chebyshev_dense_trees_2}
\end{equation}

\paragraph{Case $3$.} Let us assume that $c$ is any integer constant and $\ell=k-c$. Due to~\eqref{eq:F_ell_trees_dense}, we get
$$
\frac{F_\ell}{\pr{\mathbb{E}Y_k}^2}\le
\frac{k^{c}n^{c}q^{\frac{k^2}{2}-\pr{c+\frac{1}{2}}k+\frac{c(c+1)}{2}}
\max\limits_{r\in\irange{0}{\ell-1}}(pq)^{-r}t_{\Delta}(k,\ell,r)}{\frac{n^k}{k!}\pr{\frac{a_{\Delta}}{e}}^{k}k^{k-2}}.
$$
Let us consider two factors in the right-hand side of the last inequality separately. First,
$$
\frac{k^{c}n^{c}q^{\frac{k^2}{2}-\pr{c+\frac{1}{2}}k+\frac{c(c+1)}{2}}}
{\frac{n^k}{k!}\pr{\frac{a_{\Delta}}{e}}^{k}k^{k-2}}=
\frac{q^{\frac{k}{2}\pr{k-2c-1}}}{n^{k-c+o(1)}a_{\Delta}^{k}}
\le
\frac{\pr{q^{\frac{3-\varepsilon}{2}}a_{\Delta}np}^{k-2c-1}}{n^{k-c+o(1)}a_{\Delta}^{k}}
=
\frac{q^{\frac{1-\varepsilon}{2}k}(pq)^k}{n^{c+1+o(1)}}
\le
\frac{(pq)^k}{n},
$$
where the last inequality follows from the fact that $q^{k(1-\varepsilon)/2}\leq(a_{\Delta} npq^{1.5})^{(1-\varepsilon)/2} \ll n^{c+o(1)}$. Since, due to Claim~\ref{cl:cl3}, $t_{\Delta}(k,\ell,r)$ is non-zero only when $\ell-r\le1+(k-\ell)(\Delta-1)\le{c}\Delta$,
$$
\max\limits_{r\in\irange{0}{\ell-1}}(pq)^{-r}t_{\Delta}(k,\ell,r)
\le\max\limits_{r\in\irange{\ell-c\Delta}{\ell-1}}(pq)^{-r}\pr{\frac{\ell}{\ell-r}}^{\ell-r}k^{k-\ell}(k-\ell)^{\ell-r-1}=(pq)^{-k}k^{O(1)}.
$$
Thus,
\begin{equation}
\frac{F_\ell}{\pr{\mathbb{E}Y_k}^2}\le
\frac{(pq)^k}{n}(pq)^{-k}k^{O(1)}=
\frac{k^{O(1)}}{n}=o(1).
\label{eq:Chebyshev_dense_trees_3}
\end{equation}

Summing up, from~\eqref{eq:Chebyshev_dense_trees_1},~\eqref{eq:Chebyshev_dense_trees_2},~and~\eqref{eq:Chebyshev_dense_trees_3}, we get
$$
\frac{\sum\limits_{\ell=2}^{k-1}{F_\ell}}{\pr{\mathbb{E}Y_k}^2}=
\frac{\sum\limits_{\ell=2}^{\floor{{k}-7\log_{q}k}}{F_\ell}}{\pr{\mathbb{E}Y_k}^2}+
\frac{\sum\limits_{\ell=\floor{{k}-7\log_{q}k}+1}^{k-c-1}{F_\ell}}{\pr{\mathbb{E}Y_k}^2}+
\frac{\sum\limits_{\ell=k-c}^{k-1}{F_\ell}}{\pr{\mathbb{E}Y_k}^2}=o(1).
$$
Thus, by Chebyshev's inequality,
$$
\mathbb{P}(Y_k=0)\leq\frac{\D{Y_k}}{\pr{\mathbb{E}Y_k}^2}\le
\frac{\mathbb{E}Y_k(Y_k-1)-\pr{\mathbb{E}Y_k}^2+\mathbb{E}Y_k}{\pr{\mathbb{E}Y_k}^2}\le
\frac{\sum\limits_{\ell=2}^{k-1}{F_\ell}}{\pr{\mathbb{E}Y_k}^2}+\frac{1}{\mathbb{E}Y_k}=o(1).
$$

Recalling~\eqref{eq:reduction_from_Y_to_T} and~\eqref{eq:Markov_trees_dense}, we conclude that that for any $\varepsilon>0$, whp
$$
\floor{2\log_{q}\pr{a_{\Delta}np}+3-\varepsilon}\le
\mathbf{T}_{\Delta}\le
\ceil{2\log_{q}\pr{a_{\Delta}np}+2},
$$
completing the proof of Theorem \ref{th:concentration_max_ind_tree_bounded_deg}.


\subsection{Forests}
\label{sc:dense_forests_proof_TH}

Let $Z_k$ be the number of induced rooted forests in $G(n,p)$ on $k$ vertices, with maximum degree at most $\Delta$, and roots having degrees at most $\Delta-1$. Then 
\begin{equation}
\mathbf{F}_{\Delta}=\max\setdef{k\in\irange{1}{n}}{Z_k>0}.
\label{eq:reduction_from_Z_to_F}
\end{equation}
By Claim~\ref{cl:rooted_forests_bounds},
\begin{align}
\mathbb{E}Z_{k}
&\le\binom{n}{k}\sum\limits_{m=1}^{k}
\binom{k-1}{m-1}k^{k-m}c_2^m\pr{\frac{a_{\Delta}}{e}}^{k} p^{k-m}q^{-\binom{k}{2}+(k-m)}\notag\\
&=\pr{\frac{a_{\Delta}}{e}}^{k}\binom{n}{k}q^{-\binom{k}{2}}
\sum\limits_{m=1}^{k}\binom{k-1}{m-1}(kpq)^{k-m}c_2^m\notag\\
&=c_2\pr{\frac{a_{\Delta}}{e}}^{k}\binom{n}{k}q^{-\binom{k}{2}}(kpq+c_2)^{k-1}\label{eq:Z_upper_c2}\\
&=\Theta\left(\pr{\frac{a_{\Delta}}{e}}^{k}\binom{n}{k}q^{-\binom{k}{2}}(kpq)^{k-1}\right)\stackrel{\eqref{eq:expect_Y_dense}}=\Theta(k\cdot\mathbb{E}Y_k),\notag
\end{align}
where $Y_k$ is the number of induced trees in $G(n,p)$ on $k$ vertices and maximum degree bounded above by $\Delta$.
Due to~\eqref{eq:Markov_trees_dense}, we get that, for $k=\ceil{2\log_{q}\pr{a_{\Delta}np}+3}$,
$$
\mathbb{E}Z_{k}\le\exp\pr{-\frac{3}{2}\ln{k}+O(1)}=o(1).
$$
By Markov's inequality whp 
 $\mathbf{F}_{\Delta}\leq k-1$. Since tree is a forest, $\mathbf{T}_{\Delta}\leq\mathbf{F}_{\Delta}$. Thus, the lower bound in the first part of Theorem~\ref{lemma_tree_of_bounded_degree} implies the lower bound in the second part of Theorem~\ref{lemma_tree_of_bounded_degree} as well, completing the proof.

\section{Induced forests in sparse random graphs}
\label{sc:small_p}

Let $p=p(n)\in(0,1)$ be arbitrary, $3\leq\Delta=\mathrm{const}$, $q:=1/(1-p)$.

As in Section~\ref{sc:dense_forests_proof_TH}, we denote by $Z_k$ the number of induced rooted subforests in $G(n,p)$ on $k$ vertices with maximum degree at most $\Delta$ and with roots having degrees at most $\Delta-1$. Then~\eqref{eq:reduction_from_Z_to_F} holds. Fix a small $\varepsilon>0$ and set
\begin{align*}
k_{+\varepsilon}&:=\ceil{2\log_q(a_{\Delta}np(1+\varepsilon))+3},\\
k_{-\varepsilon}&:=\floor{2\log_q(a_{\Delta}np(1-\varepsilon))+3}.
\end{align*}

The rest of the proof is organised as follows. In Section~\ref{sc:small_p_proof_exp}, we estimate $\mathbb{E}Z_k$ and prove, in particular, that it is small when $k=k_{+\varepsilon}$ and large when $k=k_{-\varepsilon}$. In Section~\ref{sc:small_p_proof_var}, we bound $\mathrm{Var}Z_k$. Finally, in Section~\ref{sc:small_p_proof_Talagrand}, we combine the results from the previous two sections and apply Talagrand's inequality to finish the proof of Theorem~\ref{th:concentration_max_ind_rooted_forests_bounded_degree}.

\subsection{Expectation}
\label{sc:small_p_proof_exp}

By Claim~\ref{cl:rooted_forests_bounds}, in the same way as in~\eqref{eq:Z_upper_c2}, we get that, for all $k\in[n]$,
\begin{align*}
\label{eq:e_z_t_lower_upper_bounds}
c_1\pr{\frac{a_{\Delta}}{e}}^{k}\binom{n}{k}q^{-\binom{k}{2}}(kpq+c_1)^{k-1}
\le\mathbb{E}Z_k
&\le c_2\pr{\frac{a_{\Delta}}{e}}^{k}\binom{n}{k}q^{-\binom{k}{2}}(kpq+c_2)^{k-1}
\end{align*}
or, equivalently,
\begin{equation}
\label{eq:e_z_t}
\mathbb{E}Z_k=\pr{\frac{a_{\Delta}}{e}}^{k}\binom{n}{k}q^{-\binom{k}{2}}\left(kpq+e^{O(1)}\right)^{k-1}.
\end{equation}
Applying Stirling's formula, for $k\leq n/2$, we obtain
$$
\binom{n}{k}=\pr{\frac{en}{k}\exp\pr{O\pr{\frac{k/n}{1-k/n}}+O\pr{\frac{\ln{k}}{k}}}}^{k}
$$
and
\begin{align*}
\mathbb{E}Z_k &=\pr{\frac{a_{\Delta} n}{k}\exp\pr{O\pr{\frac{k/n}{1-k/n}}+O\pr{\frac{\ln{k}}{k}}}q^{-\frac{k-1}{2}}(kpq+e^{O(1)})^{1-1/k}}^{k}\\
&=\pr{a_{\Delta}npq^{-\frac{k-3}{2}}\exp\pr{O\pr{\frac{k/n}{1-k/n}}+O\pr{\frac{\ln{k}}{k}}+O\pr{\frac{1}{kp}}}}^{k}.
\end{align*}

\begin{claim}
For any fixed $\varepsilon>0$ there exists a sufficiently large constant $C_{\varepsilon}>0$ such that if $C_{\varepsilon}/n<p<1-\varepsilon$, then
$\mathbb{E}Z_{k_{+\varepsilon}}\to 0$ and $\mathbb{E}Z_{k_{-\varepsilon}}\to\infty$ as $n\to\infty$.
\label{claim:expectation_n_ind_forests_bounded_degree_at_k_pm_eps}
\end{claim}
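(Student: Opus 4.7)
The plan is to combine the closed-form expression
$$
\mathbb{E}Z_k=\pr{a_{\Delta}npq^{-\frac{k-3}{2}}\exp\pr{O\pr{\frac{k/n}{1-k/n}}+O\pr{\frac{\ln{k}}{k}}+O\pr{\frac{1}{kp}}}}^{k}
$$
derived just above the statement with the definitions of $k_{\pm\varepsilon}$. Let $B(n,k,p)$ denote the base of the outer $k$-th power. The ceiling/floor in the definitions of $k_{\pm\varepsilon}$ give immediately
$$
a_{\Delta}np\,q^{-(k_{+\varepsilon}-3)/2}\le\frac{1}{1+\varepsilon},\qquad a_{\Delta}np\,q^{-(k_{-\varepsilon}-3)/2}\ge\frac{1}{1-\varepsilon}.
$$
So if one can show that, for sufficiently large $C_{\varepsilon}$ and $C_{\varepsilon}/n\le p\le 1-\varepsilon$, the exponential error factor lies in $[1-\varepsilon/3,\,1+\varepsilon/3]$ (at both $k=k_{+\varepsilon}$ and $k=k_{-\varepsilon}$), then $B(n,k_{+\varepsilon},p)\le\tfrac{1+\varepsilon/3}{1+\varepsilon}<1-c$ and $B(n,k_{-\varepsilon},p)\ge\tfrac{1-\varepsilon/3}{1-\varepsilon}>1+c'$ for some positive $c,c'$ depending only on $\varepsilon$. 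Since $k_{\pm\varepsilon}\to\infty$ with $n$ (as $np\ge C_\varepsilon$), raising to the $k$-th power will yield $\mathbb{E}Z_{k_{+\varepsilon}}\to 0$ and $\mathbb{E}Z_{k_{-\varepsilon}}\to\infty$.

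The main work is therefore to bound the three error terms uniformly over the wide range $C_\varepsilon/n\le p\le 1-\varepsilon$. From $k_{\pm\varepsilon}=2\log_q(a_\Delta np(1\pm\varepsilon))+O(1)$ and $\ln q\ge p$, one has $k_{\pm\varepsilon}\le (2/p)\ln(a_{\Delta}np)+O(1)$, which gives $k_{\pm\varepsilon}/n\le (2/(np))\ln(a_{\Delta}np)+o(1)$ and $k_{\pm\varepsilon}\,p\ge 2\ln(a_{\Delta}np)-O(p)$. For $np\ge C_\varepsilon$ with $C_\varepsilon$ chosen large, the quantity $(\ln x)/x$ evaluated at $x=np$ is arbitrarily small, whence $k_{\pm\varepsilon}/n=O(\ln C_\varepsilon/C_\varepsilon)$ and $1/(k_{\pm\varepsilon}p)=O(1/\ln C_\varepsilon)$; in particular $k_{\pm\varepsilon}/n$ is bounded away from $1$. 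The third error term $\ln k/k$ tends to $0$ as $n\to\infty$ because $k_{\pm\varepsilon}\to\infty$ (using $np\to\infty$ or at least $np\ge C_\varepsilon$ with $C_\varepsilon$ large and $n\to\infty$). Hence each of the three $O(\cdot)$ terms is at most, say, $\varepsilon/10$ by taking $C_\varepsilon$ large enough, which gives the desired two-sided bound on the exponential factor.

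The principal obstacle is the regime $p=\Theta(1/n)$, where $k_{\pm\varepsilon}$ is linear in $n$ and the sub-exponential corrections are not automatically $o(1)$: they are genuine constants depending on $C_\varepsilon$. The remedy is the preceding calculation showing that all three constants are $O(1/\log C_\varepsilon)$ or $O(\log C_\varepsilon/C_\varepsilon)$ and so can be driven below $\varepsilon/10$ by a single sufficiently large choice of $C_\varepsilon$. Once this uniform control is in place, the conclusions follow immediately from the ratios $\tfrac{1\pm\varepsilon/3}{1\pm\varepsilon}$ being bounded away from $1$ and from $k_{\pm\varepsilon}\to\infty$.
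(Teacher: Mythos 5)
Your proposal is correct and takes essentially the same route as the paper: start from the closed-form $\mathbb{E}Z_k=\bigl(a_{\Delta}npq^{-(k-3)/2}\cdot e^{\text{error}}\bigr)^k$, note that the floor/ceiling in $k_{\pm\varepsilon}$ bound the base factor by $1/(1\pm\varepsilon)$, then show $k_{\pm\varepsilon}/n$, $1/(k_{\pm\varepsilon}p)$, and $\ln k_{\pm\varepsilon}/k_{\pm\varepsilon}$ are all small once $C_{\varepsilon}$ is large and $n\to\infty$, so the exponential correction is $1+O(\varepsilon)$, and finally conclude from $k_{\pm\varepsilon}\to\infty$. The only cosmetic difference is the choice of $\varepsilon/3$ versus $\varepsilon/2$ for the correction window, which does not affect the argument.
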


\begin{proof}
It is easy to see that if $C_{\varepsilon}/n<p<1-\varepsilon$ and $C_{\varepsilon}$ is sufficiently large, then
$$
\frac{k_{\pm\varepsilon}}{n}\leq
\frac{2\ln(a_{\Delta}np(1+\varepsilon))}{n\ln q}+\frac{4}{n}\leq
\frac{2\ln(enp)+1}{np}+\frac{4}{np}\leq
\frac{2\ln(np)+7}{np}\le
\frac{3\ln{C_{\varepsilon}}}{C_{\varepsilon}}
$$
and, since $p/\ln q\geq 1-p$,
\begin{equation}
k_{\pm\varepsilon}p\ge2(1-p)\ln(np(1-\varepsilon))
>2\varepsilon\ln(C_{\varepsilon}(1-\varepsilon))
>\varepsilon\ln{C_{\varepsilon}}.
\label{eq:kp_large}
\end{equation}
Since $k_{\pm\varepsilon}\to\infty$ as $n\to\infty$ and $C_{\varepsilon}$ is sufficiently large,
$$
\abs{\exp\pr{O\pr{\frac{k_{\pm\varepsilon}/n}{1-k_{\pm\varepsilon}/n}}+O\pr{\frac{\ln{k_{\pm\varepsilon}}}{k_{\pm\varepsilon}}}+O\pr{\frac{1}{k_{\pm\varepsilon}p}}}-1}\le\frac{\varepsilon}{2}.
$$
Thus,
\begin{align}
\mathbb{E}Z_{k_{+\varepsilon}}&\le
\pr{a_{\Delta}npq^{-\frac{k_{+\varepsilon}-3}{2}}(1+\varepsilon/2)}^{k_{+\varepsilon}}\le
\pr{\frac{1+\varepsilon/2}{1+\varepsilon}}^{k_{+\varepsilon}}
\to0\quad\text{and}\notag\\
\mathbb{E}Z_{k_{-\varepsilon}}&\ge
\pr{a_{\Delta}npq^{-\frac{k_{-\varepsilon}-3}{2}}(1-\varepsilon/2)}^{k_{-\varepsilon}}\ge
\pr{\frac{1-\varepsilon/2}{1-\varepsilon}}^{k_{-\varepsilon}}
\to\infty
\label{eq:forests_sparse_exp_large}
\end{align}
as $n\to\infty$, completing the proof.
\end{proof}

\subsection{Variance}
\label{sc:small_p_proof_var}

This section is devoted to the proof of the following lemma.

\begin{lemma}
For any fixed $\varepsilon>0$ there exists a sufficiently large constant $C_{\varepsilon}>0$ such that if $C_{\varepsilon}/n<p<1-\varepsilon$, then
$$
\frac{\mathrm{Var} Z_{k_{-\varepsilon}}}{(\mathbb{E}Z_{k_{-\varepsilon}})^2}
\le\exp\pr{\frac{O(k_{-\varepsilon})}{(\ln(np))^5}}.
$$
If, additionally, $p> n^{-1/2+\varepsilon}$, then
$
\mathrm{Var} Z_{k_{-\varepsilon}}/(\mathbb{E} Z_{k_{-\varepsilon}})^2
=o(1).
$
\label{lm:variance_sparse_forests}
\end{lemma}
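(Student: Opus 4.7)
I will follow the classical second-moment strategy already used for trees in Section~\ref{sub_indiced_subtrees}, adapted to rooted forests with maximum degree at most $\Delta$. Writing
$$
\mathrm{Var}(Z_k) \le \mathbb{E} Z_k + \sum_{\ell=1}^{k-1} F_\ell,
$$
where $F_\ell$ is the expected number of ordered pairs of such induced rooted forests of size $k$ sharing exactly $\ell$ vertices, I construct each contributing pair by choosing the first forest, designating a subset of $\ell$ vertices inducing a sub-forest with some number $r$ of common edges, and then extending this sub-forest to a second rooted bounded-degree forest of size $k$. The enumeration of extensions is handled by Lemma~\ref{lemma_labelled_trees_with_a_given_subgraph} (part 2), with Claim~\ref{cl:rooted_forests_bounds} controlling the total count and Claim~\ref{cl:cl3} capping the number of components. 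Combined with~\eqref{eq:e_z_t} for $(\mathbb{E} Z_k)^2$, this gives a bound of the form
$$
\frac{F_\ell}{(\mathbb{E} Z_k)^2} \le \frac{\binom{k}{\ell}\binom{n-k}{k-\ell}\,q^{\binom{\ell}{2}}\,\max_{r}(pq)^{-r}N_{\Delta}(k,\ell,r)}{\binom{n}{k}\,(kpq+e^{O(1)})^{k-1}(a_{\Delta}/e)^k},
$$
where $N_\Delta(k,\ell,r)$ denotes the maximum, over acyclic graphs on $[k]$ with $\ell$ vertices and $r$ edges, of the number of rooted bounded-degree forests of size $k$ containing it as an induced subgraph.

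Next, I split the sum over $\ell$ into three ranges with cut-points $M\log_q k$ and a large constant $c$, mirroring the dense case of Section~\ref{sub_indiced_subtrees}. For range (i), $2\le\ell\le k-M\log_q k$, the factor $q^{\binom{\ell}{2}-\binom{k}{2}}$ provides enough decay that all other factors are dominated, and the sum is a geometric tail. For range (ii), $k-M\log_q k < \ell < k-c$, I use the explicit formula of Lemma~\ref{lemma_labelled_trees_with_a_given_subgraph} with component-size vector maximisation, and optimise the $r$-dependence via Claim~\ref{cl:rho} applied to the function $\rho$; the resulting bound in $\ell$ is again geometric with ratio bounded by $e^{O(1)}(a_{\Delta}p)^{O(1)} q^{(3-c)/2}<1$ once $c$ is taken large. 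For range (iii), $k-c\le \ell\le k-1$, Claim~\ref{cl:cl3} bounds the number of extra components in the common sub-forest by $O(\Delta)$ and the total contribution is $k^{O(1)}/n$, which is negligible.

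The main obstacle is that in the truly sparse regime $p=\Theta(1/n)$, the expectation estimate from Section~\ref{sc:small_p_proof_exp} carries a per-vertex slack of $\exp(O(1/(kp)))$, so $(\mathbb{E} Z_k)^2$ accumulates an error factor of $\exp(O(k/(kp)))=\exp(O(1/p))$. Moreover, the optimising $r$ in range (ii) obtained from Claim~\ref{cl:rho} sits at a saddle where further $1/\ln(np)$ losses appear, and the geometric ratio in range (i) is only $1-\Theta(1/\ln(np))$ after accounting for $pq\to 0$. Careful polylog bookkeeping — tracking exactly how the $1/\ln(np)$ losses combine through summation over $\ell$ and optimisation over $r$ — yields the stated bound $\exp(O(k)/(\ln(np))^5)$. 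In the cleaner regime $p>n^{-1/2+\varepsilon}$, the ratio $\binom{n-k}{k-\ell}q^{\binom{\ell}{2}}/\binom{n}{k}$ in range (i) acquires an extra factor $n^{-\Omega(\varepsilon)}$ that dominates the accumulated slack, and analogous slack is available in ranges (ii)--(iii), giving the $o(1)$ bound.
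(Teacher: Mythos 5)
Your proposed decomposition transplants the three-range split of the dense tree argument (Section~\ref{sub_indiced_subtrees}) with cut-points $k-M\log_q k$ and $k-c$, but this does not survive the sparse regime. For $p=o(1)$ one has $\ln q\approx p$, so $\log_q k\approx (\ln k)/p$, while $k\approx 2\ln(np)/p$. Hence $k/\log_q k\approx 2\ln(np)/\ln k$; already for $p=n^{-1/3}$ this ratio is about $4$, and for $p=C/n$ it tends to $0$. Your ``small'' range $2\le\ell\le k-M\log_q k$ is therefore empty over most of the range of $p$ that the lemma is supposed to cover, and the geometric-tail argument for range (i) never gets off the ground. The paper avoids this by cutting at $\ell=\sqrt{k/p}$, which is always a nontrivial fraction of $k$ in the sparse regime, and then uses two \emph{different} overcounting constructions: for $\ell\le\sqrt{k/p}$ it builds the pair from two disjoint forests $F^{(1)}$ of size $k$ and $F^{(2)}$ of size $k-\ell$ plus a bounded-degree gluing gadget, and for $\ell>\sqrt{k/p}$ it starts from two forests of size $k-\ell$ plus a forest on the intersection. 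Neither construction invokes Lemma~\ref{lemma_labelled_trees_with_a_given_subgraph} or Claim~\ref{cl:cl3}, which you propose to use. The ratio $\mathbb{E}Z_{k-\ell}/\mathbb{E}Z_k$ that drives the small-$\ell$ bound is controlled by a delicate saddle-point comparison (Claim~\ref{cl:forests_sparse_variance_cl1} with Propositions~\ref{prop:prop1}--\ref{prop:prop2}), which has no analogue in your outline.

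A secondary gap: your central quantity $N_\Delta(k,\ell,r)$ — the number of rooted bounded-degree forests containing a fixed subforest — is not given by Lemma~\ref{lemma_labelled_trees_with_a_given_subgraph}(2), which counts unrooted forests without any degree restriction and without the constraint on root degree. Dropping the degree restriction (as in the dense tree case via $t_\Delta(k,\ell,r)\le t_k(k,\ell,r)$) is much more costly for forests because the number of components can grow without the degree cap, and you would also need to insert the root multiplicities and the root-degree constraint; none of this is addressed. Finally, the claim that ``careful polylog bookkeeping'' converts a naive $\exp(O(1/p))$ slack into the stated $\exp(O(k)/(\ln np)^5)$ is precisely where the technical content lives, and it is asserted rather than argued: there is no reason a priori that the losses accumulated from the $1/pq$ factors, from the replacement of $t_\Delta$ by unbounded-degree counts, and from the $r$-optimisation combine to something that small, and in fact the paper needs Claim~\ref{cl:cl2} and the Lagrange-mean-value control of $r_t$ to show that the ratio of saddle points contributes only $e^{O(\ell)}$.
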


We need the following two claims.

\begin{claim}
For any fixed $\varepsilon>0$, there exists a sufficiently large constant $C_{\varepsilon}>0$ such that if $C_{\varepsilon}/n<p<1-\varepsilon$, then, for all $\ell\le\sqrt{k_{-\varepsilon}/p}$,
$$
\frac{\mathbb{E} Z_{k_{-\varepsilon}-\ell}}{\mathbb{E} Z_{k_{-\varepsilon}}}q^{-\ell(k_{-\varepsilon}-\ell)}\leq 2e^{3\ell+o(1)}\pr{\frac{q^{\frac{\ell-3}{2}}}{np}}^{\ell}\pr{1-\frac{k_{-\varepsilon}}{n}}^{-k_{-\varepsilon}}.
$$
\label{cl:forests_sparse_variance_cl1}
\end{claim}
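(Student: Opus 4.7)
The plan is a direct computation starting from~\eqref{eq:e_z_t}. Abbreviating $k:=k_{-\varepsilon}$, one has
\begin{equation*}
\frac{\mathbb{E}Z_{k-\ell}}{\mathbb{E}Z_k}=\pr{\frac{e}{a_\Delta}}^\ell\cdot\frac{\binom{n}{k-\ell}}{\binom{n}{k}}\cdot q^{\binom{k}{2}-\binom{k-\ell}{2}}\cdot\frac{\pr{(k-\ell)pq+e^{O(1)}}^{k-\ell-1}}{\pr{kpq+e^{O(1)}}^{k-1}}.
\end{equation*}
Using $\binom{k}{2}-\binom{k-\ell}{2}=\ell(2k-\ell-1)/2$ and combining with the external $q^{-\ell(k-\ell)}$ appearing in the claim produces the clean factor $q^{\ell(\ell-1)/2}=q^\ell\cdot q^{\ell(\ell-3)/2}$, giving the $q^{\ell(\ell-3)/2}$ in the target.

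For the binomial ratio, I would use the product formula $\binom{n}{k-\ell}/\binom{n}{k}=\prod_{j=1}^{\ell}(k-j+1)/(n-k+j)\le (k/(n-k))^\ell=(k/n)^\ell(1-k/n)^{-\ell}\le (k/n)^\ell(1-k/n)^{-k}$, the last step using $\ell\le k$ (which holds since $\ell\le\sqrt{k/p}\le k$ once $kp\ge 1$, guaranteed by~\eqref{eq:kp_large} and a large enough $C_\varepsilon$). For the ratio of the $(\cdot pq+e^{O(1)})$ powers, since $(k-\ell)pq+e^{O(1)}\le kpq+e^{O(1)}$, this is at most $(kpq+e^{O(1)})^{-\ell}\le (kpq)^{-\ell}e^{o(1)}$. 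Combining with $(e/a_\Delta)^\ell\le e^\ell\le e^{3\ell}$ (since $a_\Delta\ge 1$) and using the identity $(k/n)^\ell(kpq)^{-\ell}q^\ell=(np)^{-\ell}$, one assembles
\begin{equation*}
\frac{\mathbb{E}Z_{k-\ell}}{\mathbb{E}Z_k}q^{-\ell(k-\ell)}\le e^{3\ell}\cdot\frac{q^{\ell(\ell-3)/2}}{(np)^\ell}\cdot(1-k/n)^{-k}\cdot e^{o(1)},
\end{equation*}
which yields the stated bound, with the factor $2$ absorbing lower-order constants.

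The hard part will be making the transition $(kpq+e^{O(1)})^{-\ell}\le (kpq)^{-\ell}e^{o(1)}$ rigorous \emph{uniformly} in $\ell\in[1,\sqrt{k/p}]$ and across the full range $C_\varepsilon/n<p<1-\varepsilon$. Naively, $(1+O(1/(kpq)))^{-\ell}=\exp(O(\ell/(kpq)))$ is not automatically $e^{o(1)}$ when $p$ is close to $C_\varepsilon/n$ and $\ell$ is near its upper limit. The resolution is to interpret the \enquote{$e^{O(1)}$} appearing in~\eqref{eq:e_z_t} via the saddle-point asymptotics of Theorem~\ref{lemma_tree_of_bounded_degree} part 2: the corresponding values at $k-\ell$ and $k$ are both close to the saddle point $\alpha_\Delta$ and differ by only $O(\ell/((k-\ell)kpq))$, so they cancel to leading order in the ratio, and the surviving correction is easily absorbed into $e^{o(1)}$ (and, for the largest $\ell$, into the generous $e^{3\ell}$ slack).
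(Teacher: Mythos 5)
Your algebraic bookkeeping is fine (the binomial-ratio bound, the identity $(k/n)^\ell(kpq)^{-\ell}q^\ell=(np)^{-\ell}$, the combination $q^{\binom{k}{2}-\binom{k-\ell}{2}}q^{-\ell(k-\ell)}=q^{\ell(\ell-1)/2}$), and you correctly flag the obstruction — but the resolution you sketch is not a proof, and it glosses over a wrong intermediate step.

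The issue with the intermediate step: you write
\[
\frac{\pr{(k-\ell)pq+e^{O(1)}}^{k-\ell-1}}{\pr{kpq+e^{O(1)}}^{k-1}}\le\pr{kpq+e^{O(1)}}^{-\ell},
\]
but the two $e^{O(1)}$ symbols hide \emph{different} constants ($c_2$ from the upper bound in the numerator, $c_1$ from the lower bound in the denominator). The correct bound is
\[
\frac{\pr{kpq+c_2}^{k-\ell-1}}{\pr{kpq+c_1}^{k-1}}=\pr{kpq+c_2}^{-\ell}\pr{\frac{kpq+c_2}{kpq+c_1}}^{k-1},
\]
and the second factor is $\exp\pr{O(k/(kpq))}=\exp(O(1/(pq)))$, which for $p\asymp C_\varepsilon/n$ is $\exp(\Omega(n))$ — catastrophic, not $e^{o(1)}$. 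So~\eqref{eq:e_z_t} is genuinely too coarse as a starting point; this is not a matter of tightening constants.

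Your proposed fix — re-derive the ratio from the saddle-point asymptotics in Theorem~\ref{lemma_tree_of_bounded_degree}(2) — is the correct direction and is indeed what the paper does, but you leave the hard part as a sketch. Writing $\mathbb{E}Z_t=\binom{n}{t}q^{-\binom{t}{2}}(pq)^{t-1}(t-1)!A(r_t)$ with $A(r_t)=(\gamma_\Delta(r_t))^t e^{r_t/(pq)+o(1)}/\bigl(r_t^{t-1}\sqrt{2\pi\beta(r_t)t}\bigr)$, one must bound $A(r_{k-\ell})/A(r_k)\le e^{3\ell+o(1)}$, and this hinges on the behaviour of the $t$-dependent saddle point $r_t$. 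In particular: (i) you need $r_t$ differentiable in $t$ with $0<r_t'\le r_te^{2r_t}/(t^2pq)$, which the paper gets from the implicit function theorem after a nontrivial lower bound on the denominator $\varphi_r'$ (Proposition~\ref{prop:prop2}); (ii) you need to control $e^{(r_{k-\ell}-r_k)/(pq)}$, which is potentially $\exp$ of something polynomial in $n$ — the paper uses the \emph{sign} $r_{k-\ell}\le r_k$ (monotonicity) to kill this factor outright, a point your sketch never mentions; (iii) you need $(\gamma_\Delta(r_{k-\ell}))^{k-\ell}/(\gamma_\Delta(r_k))^k\le1$, again via monotonicity; (iv) you need $(r_k/r_{k-\ell})^{k-1}\le e^\ell$ via the derivative bound. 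None of these come for free from the vague statement that both saddle points are "close to $\alpha_\Delta$" (they are not even converging to $\alpha_\Delta$ in general: they solve $\frac{r\gamma_\Delta'(r)}{\gamma_\Delta(r)}+\frac{r}{tpq}=1$, not~\eqref{eq:alpha_Delta_definition}). So there is a genuine gap: the key estimate on $A(r_{k-\ell})/A(r_k)$ is asserted, not established, and establishing it requires the full implicit-function-theorem analysis of $r_t$ carried out in the paper.
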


\begin{proof}
By Theorem~\ref{lemma_tree_of_bounded_degree}, for every $k\geq\frac{1}{pq}$ (note that $k=k_{-}$ and $k=k_- -\ell$ are such when $C_{\varepsilon}$ is large enough due to~\eqref{eq:kp_large}), letting $w=1/(pq)$, we get
\begin{align*}
\mathbb{E} Z_k={n\choose k}\sum_{m=1}^k f_{\Delta}(k,m)p^{k-m}(1-p)^{{k\choose 2}-k+m}
&=\binom{n}{k}q^{-\binom{k}{2}}(p q)^{k}\sum_{m=1}^l  f_{\Delta}(k,m)(1/pq)^m\\
&=\binom{n}{k}q^{-\binom{k}{2}}(p q)^{k-1}(k-1)!A(r_t),
\end{align*}
where 
$$
 A(r_k)=\frac{\left(\gamma_{\Delta}(r_k)\right)^t e^{\frac{r_k}{p q}+o(1)}}{r^{k-1}_k \sqrt{2 \pi \beta(r_k) k}}
$$ 
and $r_k$ (dependence on $n$ is implicit) is the unique positive solution of $\frac{r \gamma_{\Delta}^{\prime}(r)}{\gamma_{\Delta}(r)}+\frac{r}{k p q}=1$.

Let us set $k=k_-$ and estimate $\frac{\mathbb{E}Z_{k-\ell}}{\mathbb{E}Z_k}$:
\begin{align*}
\frac{\mathbb{E}Z_{k-\ell}}{\mathbb{E}Z_k}\cdot q^{-\ell(k-\ell)}&\leq \frac{\frac{n^{k-\ell}}{(k-\ell)!}}{\frac{(n-k)^k}{k!}}q^{-\binom{k-\ell}{2}+\binom{k}{2}-\ell(k-\ell)}\pr{pq}^{-\ell}\frac{(k-\ell-1)!}{(k-1)!}\frac{A(r_{k-\ell})}{A(r_k)}\\
&=
\frac{k}{k-\ell}\frac{\pr{np}^{-\ell}}{\pr{1-\frac{k}{n}}^k}q^{\frac{\ell^2}{2}-\frac{3\ell}{2}}\frac{A(r_{k-\ell})}{A(r_k)}\stackrel{\eqref{eq:kp_large}}\leq
2\frac{A(r_{k-\ell})}{A(r_k)}\pr{\frac{q^{\frac{\ell-3}{2}}}{np}}^{\ell}\pr{1-\frac{k}{n}}^{-k}.
\end{align*}

In order to estimate $\frac{A(r_{k-\ell})}{A(r_k)}$, let us view $r_t$ is a function of $t$ which is defined implicitly on $(0,\infty)$ as the positive solution of the equation (the fact that such a solution exists and unique is explained in Section~\ref{sc:part2_proof})
$$
 \varphi(t,r):=\frac{r \gamma_{\Delta}^{\prime}(r)}{\gamma_{\Delta}(r)}+\frac{r}{t p q}-1=0.
$$
Let us observe the following properties of the function $r_t$.

\begin{prop}
$r_t>1$ when $|t-k|\le\sqrt{k/p}$.
\label{prop:prop1}
\end{prop}
\begin{proof}
We first note that $tpq>2^{\Delta}(\Delta-1)!e$ for sufficiently large $C_{\varepsilon}$ and $n$, due to (\ref{eq:kp_large}).

Now, assume that $1/2<r_t\leq 1$. Then 
$$
\frac{r_t \gamma_{\Delta}^{\prime}(r_t)}{\gamma_{\Delta}(r_t)}\leq \frac{\gamma_{\Delta}^{\prime}(r_t)}{\gamma_{\Delta}(r_t)}=\frac{\gamma_{\Delta}(r_t)-\frac{r_t^{\Delta-1}}{(\Delta-1)!}}{\gamma_{\Delta}(r_t)}\leq 1-\frac{r_t^{\Delta-1}}{(\Delta-1)!e^{r_t}}\leq 1-\frac{1}{2^{\Delta-1}(\Delta-1)!e}.
$$ 
Hence, 
$$
\frac{r_t \gamma_{\Delta}^{\prime}(r_t)}{\gamma_{\Delta}(r_t)}+\frac{r_t}{t p q}<1-\frac{1}{2^{\Delta}(\Delta-1)!e}<1.
$$ 
Also, $r_t$ cannot be less than $1/2$, because in that case  
$$
\frac{r_t \gamma_{\Delta}^{\prime}(r_t)}{\gamma_{\Delta}(r_t)}+\frac{r_t}{t p q}<\frac{\gamma'_{\Delta}(r_t)}{2\gamma_{\Delta}(r_t)}+\frac{1}{2t p q}<\frac{1}{2}+\frac{1}{2^{\Delta+1}(\Delta-1)!e}<1.
$$
\end{proof}

\begin{prop}
$0<r'_t\leq\frac{r_t e^{2r_t}}{t^2pq}$.
\label{prop:prop2}
\end{prop}

\begin{proof}
By the implicit function theorem, 
\begin{equation}\label{Impl_thm}
    r^{\prime}_t=-\frac{\varphi_t^{\prime}(t, r_t)}{\varphi_r^{\prime}(t, r_t)}.
\end{equation}
Let us show that the denominator in \eqref{Impl_thm} is not equal to zero. Clearly,
\begin{align*}
\varphi_r^{\prime}(t, r_t) &=\frac{\gamma_{\Delta}'(r_t)\gamma_{\Delta}(r_t)+r_t \gamma_{\Delta}''(r_t)\gamma_{\Delta}(r_t)-r_t(\gamma_{\Delta}'(r_t))^2}{(\gamma_{\Delta}(r_t))^2}+\frac{1}{tpq}\\
&=\frac{\gamma_{\Delta-1}(r_t)\gamma_{\Delta}(r_t)+r_t \gamma_{\Delta-2}(r_t)\gamma_{\Delta}(r_t)-r_t(\gamma_{\Delta-1}(r_t))^2}{(\gamma_{\Delta}(r_t))^2}+\frac{1}{tpq}=:\frac{\psi(r_t)}{(\gamma_{\Delta}(r_t))^2}+\frac{1}{tpq}.
\end{align*}
Due to Claim~\ref{cl:cl1}, $r_t\leq 2$. If $\Delta\geq 4$, then
$$
 \gamma_{\Delta-2}(r_t)\gamma_{\Delta}(r_t)-(\gamma_{\Delta-1}(r_t))^2=
 \gamma_{\Delta-2}(r_t)\left(\frac{r_t^{\Delta-1}}{(\Delta-1)!}-\frac{r_t^{\Delta-2}}{(\Delta-2)!}\right)-\left(\frac{r_t^{\Delta-2}}{(\Delta-2)!}\right)^2<0.
$$
Thus, 
\begin{align*}
 \psi(r_t) &\geq  
 \gamma_{\Delta-1}(r_t)(\gamma_{\Delta}(r_t)-\gamma_{\Delta-1}(r_t))+2\gamma_{\Delta-2}(r_t)\gamma_{\Delta}(r_t)- (\gamma_{\Delta-1}(r_t))^2\\
 &\geq  
 2\gamma_{\Delta-2}(r_t)\gamma_{\Delta}(r_t)- (\gamma_{\Delta-1}(r_t))^2\\
 &=(\gamma_{\Delta-2}(r_t))^2 +2\gamma_{\Delta-2}(r_t)\frac{r_t^{\Delta-1}}{(\Delta-1)!}-\left(\frac{r_t^{\Delta-2}}{(\Delta-2)!}\right)^2\geq  (\gamma_{\Delta-2}(r_t))^2\geq 1.
\end{align*}
If $\Delta=3$, then
$$
\psi(r_t)=(1+r_t)\left(1+r_t+\frac{r_t^2}{2}\right)+r_t\left(1+r_t+\frac{r_t^2}{2}\right)-r_t(1+r_t)^2=
1+2r_t+\frac{r_t^2}{2}\geq 1.
$$ 
Hence, indeed, the denominator in~\eqref{Impl_thm} does not vanish. Moreover, since $\gamma_{\Delta}(r_t)<e^{r_t}$,
$$
\varphi_r^{\prime}(t, r_t)>\frac{1}{(\gamma_{\Delta}(r_t))^2}>e^{-2r_t}.
$$
This completes the proof of Proposition~\ref{prop:prop2} since $\varphi_t^{\prime}(t, r_t)=-r_t/(t^2pq).$
\end{proof}

Now, let us derive from Proposition~\ref{prop:prop1} and Proposition~\ref{prop:prop2} the following three properties:

\begin{enumerate}

\item $r(k-\ell)\le r(k)$ due to Proposition~\ref{prop:prop2}. So,  $(\gamma_{\Delta}(r_{k-\ell}))^{k-\ell}\leq\pr{\gamma_{\Delta}(r_k})^k.$

\item By the Lagrange's mean value theorem and Claim~\ref{cl:cl1},
$$
|r_{k-\ell}-r_k|\leq \frac{110\ell}{k^2p}.
$$

\item Due to Proposition~\ref{prop:prop1},  for sufficiently large $C_{\varepsilon}$, 
$$
\pr{\frac{r_k}{r_{k-\ell}}}^k=e^{k(\ln(r_k)-\ln (r_{k-\ell}))}\leq e^{k(r_k-r_{k-\ell})}\leq e^{\frac{110\ell k}{k^2p}}=e^{\frac{110\ell}{kp}}\leq e^{\ell}.
$$
\end{enumerate}

Combining 1--3, we get that
\begin{align*}
\frac{A(r_{k-\ell})}{A(r_k)} &=2\frac{\left(\gamma_{\Delta}(r_{k-\ell})\right)^{k-\ell} e^{\frac{r_{k-\ell}}{p q}+o(1)}/\left(r_{k-\ell}^{k-\ell-1} \sqrt{2 \pi \beta(r_{k-\ell}) (k-\ell)}\right)}{\left(\gamma_{\Delta}(r_k)\right)^k e^{\frac{r_k}{p q}+o(1)}/\left(r_k^{k-1} \sqrt{2 \pi \beta(r_k) k}\right)}\\
&\leq\frac{(\gamma_{\Delta}(r_{k-\ell}))^{k-\ell}}{\pr{\gamma_{\Delta}(r_k})^k}\pr{\frac{r_k}{r_{k-\ell}}}^{k-1}\pr{r_{k-\ell}}^{\ell}e^{\frac{r_{k-\ell}-r_k}{pq}+o(1)}\sqrt{\frac{k\beta(r_k)}{(k-\ell)\beta(r_{k-\ell})}}\\
&\stackrel{\text{Claim}~\ref{cl:cl2}}\leq e^{\ell}2^{\ell}e^{o(1)}\leq e^{3\ell+o(1)}.
\end{align*}

\end{proof}

\begin{claim}
For any $\varepsilon>0$, there exists a sufficiently large constant $C_{\varepsilon}>0$ such that if $C_{\varepsilon}/n<p<1-\varepsilon$, then, for all $1\le\ell\le k_{-\varepsilon}$,
$$
\frac{\pr{\mathbb{E} Z_{k_{-\varepsilon}-\ell}}^2}{\pr{\mathbb{E} Z_{k_{-\varepsilon}} }^2}
\le(a_{\Delta}(n-k)p)^{-2\ell}q^{2k_{-\varepsilon}\ell-\ell^2-3\ell}e^{O(1/p)}k_{-\varepsilon}^3,
$$
assuming $\mathbb{E}Z_0=1$.
\label{cl:forests_sparse_variance_cl2}
\end{claim}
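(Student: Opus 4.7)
My plan is to apply Claim~\ref{cl:rooted_forests_bounds} to upper-bound $\mathbb{E}Z_{k-\ell}$ and lower-bound $\mathbb{E}Z_k$ (writing $k:=k_{-\varepsilon}$), which for $1\le\ell\le k-1$ gives
$$
\frac{\mathbb{E}Z_{k-\ell}}{\mathbb{E}Z_k}\le \frac{c_2}{c_1}\pr{\frac{e}{a_{\Delta}}}^{\ell}\frac{\binom{n}{k-\ell}}{\binom{n}{k}}q^{\ell(2k-\ell-1)/2}\frac{((k-\ell)pq+c_2)^{k-\ell-1}}{(kpq+c_1)^{k-1}}.
$$
I then estimate the two ratios. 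For the polynomial ratio, the bound $(x+c)^m\le x^m e^{cm/x}$ applied to the numerator together with dropping the positive additive in the denominator yields
$$
\frac{((k-\ell)pq+c_2)^{k-\ell-1}}{(kpq+c_1)^{k-1}}\le (kpq)^{-\ell}\pr{\frac{k-\ell}{k}}^{k-\ell-1}e^{O(1/p)},
$$
since $m/(kpq)\le 1/(pq)=O(1/p)$. For the binomial ratio, the naive inequality $\binom{n}{k-\ell}/\binom{n}{k}\le (k/(n-k))^{\ell}$ would be too loose, so I use Stirling's formula: observing that $(1-\ell/(n-k+\ell))^{n-k}=e^{-\ell+O(\ell^2/(n-k))}$ and that $\ell^2/(n-k)\le k^2/n=O(1/p)$ under our assumptions (since $k=O(\log(np)/p)$ and $pn\ge C_\varepsilon$ is large), it yields the sharper
$$
\frac{\binom{n}{k-\ell}}{\binom{n}{k}}\le \pr{\frac{k}{e(n-k)}}^{\ell}\pr{\frac{k}{k-\ell}}^{k-\ell}\sqrt{\frac{k}{k-\ell}}\,e^{O(1/p)}.
$$

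The crucial observation when combining the two estimates is the telescoping
$$
\pr{\frac{k}{k-\ell}}^{k-\ell}\pr{\frac{k-\ell}{k}}^{k-\ell-1}=\frac{k}{k-\ell}.
$$
Collecting the $\ell$-th power factors gives $(a_{\Delta}(n-k)pq)^{-\ell}$, which combined with $q^{\ell(2k-\ell-1)/2}$ and the identity $(1-p)q=1$ produces the desired power $q^{\ell(2k-\ell-3)/2}$. Altogether,
$$
\frac{\mathbb{E}Z_{k-\ell}}{\mathbb{E}Z_k}\le O(1)\cdot (a_{\Delta}(n-k)p)^{-\ell}\cdot q^{\ell(2k-\ell-3)/2}\cdot \pr{\frac{k}{k-\ell}}^{3/2}\cdot e^{O(1/p)}.
$$
Squaring and bounding $(k/(k-\ell))^3\le k^3$ establishes the claim for $1\le\ell\le k-1$.

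For the boundary case $\ell=k$, using the convention $\mathbb{E}Z_0=1$ the claim reduces to
$$
\mathbb{E}Z_k\ge \pr{a_{\Delta}(n-k)p\,q^{-(k-3)/2}}^{k} k^{-3/2}e^{-O(1/p)}.
$$
This follows directly from the lower bound in Claim~\ref{cl:rooted_forests_bounds}, Stirling's formula applied to $\binom{n}{k}$, and the same error analysis as above; the extra factor $1/(pq)=O(1/p)$ is absorbed into $e^{O(1/p)}$. The main technical obstacle is the precise Stirling estimate in the binomial ratio: without producing the factor $(k/(k-\ell))^{k-\ell}$ there, the subsequent cancellation with the polynomial ratio fails and a spurious $e^{\ell}$ remains, which cannot be absorbed into $e^{O(1/p)}k^3$ when $\ell$ is comparable to $k$.
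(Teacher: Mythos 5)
Your proof is correct and takes essentially the same route as the paper: both start from the two-sided estimate of $\mathbb{E}Z_k$ furnished by Claim~\ref{cl:rooted_forests_bounds} (equivalently, display~\eqref{eq:e_z_t}), apply Stirling to the binomial ratio, and exploit the cancellation between the $(k/(k-\ell))^{k-\ell}$ factor from the binomial and the $((k-\ell)/k)^{k-\ell-1}$ factor from the polynomial term, with the relative error controlled by $k^2/n=O(1/p)$. The only cosmetic difference is in the boundary case $\ell=k$, which the paper handles by writing $1/(\mathbb{E}Z_k)^2=(\mathbb{E}Z_1)^2/(n^2(\mathbb{E}Z_k)^2)$ and invoking the $\ell=k-1$ estimate, while you bound $\mathbb{E}Z_k$ from below directly; both are valid.
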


\begin{proof}
Set $k=k_{-\varepsilon}$. Due to \eqref{eq:e_z_t}, we get, for $\ell<k$,
\begin{align*}
\frac{\pr{\mathbb{E} Z_{k-\ell}}^2}{\pr{\mathbb{E} Z_k }^2}
&=\frac{\pr{\frac{a_{\Delta}}{e}}^{2(k-\ell)}\binom{n}{k-\ell}^{2}q^{-2\binom{k-\ell}{2}}((k-\ell)pq+\Theta(1))^{2(k-\ell-1)}}{\pr{\frac{a_{\Delta}}{e}}^{2k}\binom{n}{k}^{2}q^{-2\binom{k}{2}}(kpq+\Theta(1))^{2(k-1)}}\\
&\le\sqrt{\frac{k}{k-\ell}}\pr{\frac{a_{\Delta}}{e}}^{-2\ell}\frac{k^{2k}(n-k)^{2(n-k)}}{(n-k+\ell)^{2(n-k+\ell)}(k-\ell)^{2(k-\ell)}}\times\\
&\quad\quad\quad\quad\quad\quad\quad\quad\quad\quad\quad\times q^{-2\binom{k-\ell}{2}+2\binom{k}{2}}e^{\Theta(1/(pq))}\frac{((k-\ell)pq)^{2(k-\ell-1)}}{(kpq)^{2(k-1)}}\\
&\le(a_{\Delta}(n-k)p)^{-2\ell}q^{2k\ell-\ell^2-3\ell}e^{O(1/p)}\frac{k^3}{(k-\ell)^3}.
\end{align*}
As for the case $\ell=k$, notice that $\mathbb{E}Z_1=n$, thus
\begin{align*}
\frac{1}{\pr{\mathbb{E} Z_k }^2}
=\frac{1}{n^2}\cdot\frac{(\mathbb{E}Z_1)^2}{\pr{\mathbb{E} Z_k }^2}
&\le\frac{1}{n^2}(a_{\Delta}(n-k)p)^{-2(k-1)}q^{2k(k-1)-(k-1)^2-3(k-1)}e^{O(1/p)}k^3\\
&=(a_{\Delta}(n-k)p)^{-2k}q^{2k^2-k^2-3k}e^{O(1/p)}k^3,
\end{align*}
completing the proof.
\end{proof}

\begin{proof}[Proof of Lemma~\ref{lm:variance_sparse_forests}]

Set $k:=k_{-\varepsilon}$ and let $n$ be large enough. As usual, we write 
$$
\mathrm{Var}Z_k=\mathbb{E}Z_k(Z_k-1)+\mathbb{E}Z_k-(\mathbb{E}Z_k)^2\leq
\sum_{\ell=2}^kF_{\ell},
$$ 
where $F_{\ell}$ is the expected value of the number of (ordered) pairs $(F_1,F_2)$ of induced rooted forests of size $k$ with maximum degree at most $\Delta$ with $\ell$ common vertices. Below, we bound $F_{\ell}$ for small and large $\ell$ separately.

\paragraph{Small $\ell$.} Let $\ell\le\sqrt{k/p}$. We estimate $F_{\ell}$ in the following way:

\begin{enumerate}

\item Choose two disjoint rooted forests $F^{(1)}$ and $F^{(2)}$ of sizes $k$ and $k-\ell$ respectively with maximum degree at most $\Delta$. This contributes the factor of $\mathbb{E} Z_{k} \mathbb{E} Z_{k-\ell}$.

\item Choose a subset $L\subset V(F^{(1)})$ of size $\ell$ and fix roots in each component of $F^{(1)}[L]$ --- it contributes the factor of $\binom{k}{\ell}2^{\ell}$.

\item Construct a forest $F^{(2+)}$ on $L\sqcup V(F^{(2)})$ by drawing, for each vertex $v\in L$, at most $\Delta$ edges from $v$ to $V(F^{(2)})$ --- it contributes the factor of
$$
\left(\prod_{v\in L}\left(\sum_{0\leq s\leq\Delta}\binom{k}{s}(pq)^s\right)\right)q^{-\ell(k-\ell)}\leq
q^{-\ell(k-\ell)}\pr{(1+o(1))(kpq)^{\Delta}/\Delta!}^\ell.
$$

\item Let $C(v,G)$ denote the connected component of the graph $G$ containing its vertex $v$.

We eventually consider the following algorithm that outputs the set of roots of $F^{(2+)}$. We first order arbitrarily all vertices of $L$. Then, we follow vertices one by one, in the predescribed order, and for each vertex $v\in L$ do the following:

\begin{itemize}
\item if the tree $C(v,F^{(1)}[L])$ has already been visited, just skip this step and proceed with the next vertex;
\item otherwise, choose one of the two options: either the root of $C(v,F^{(1)}[L])$ becomes the root of $C(v,F^{(2+)})$, or the root is outside of $L$;
\item if the second choice has been made --- the root of $C(v,F^{(2+)})$ has to be outside of $L$ --- either skip the current vertex $v$ and proceed with the next vertex, or choose a neighbour $w\in V(F^{(2)})$ of $v$ and let the root of $C(w,F^{(2)})$ be the roof of $C(v,F^{(2+)})$.
\end{itemize}

Finally, components of $F^{(2)}$ that have no neighbours in $L$ and, thus, fall into $F^{(2+)}$ unchanged, inherit the roots that they have had in $F^{(2)}$. This procedure contributes the factor of $(\Delta+2)^{\ell}$.

\end{enumerate}

Hence, for sufficiently large n,
\begin{gather*}
\frac{F_{\ell}}{\pr{\mathbb{E} Z_k}^2}\leq\frac{\mathbb{E} Z_{k-\ell}}{\mathbb{E}Z_k}\binom{k}{\ell}\pr{\frac{3(\Delta+2)(kpq)^{\Delta}}{\Delta!}}^{\ell}q^{-\ell(k-\ell)}.
\end{gather*}
Claim~\ref{cl:forests_sparse_variance_cl1} implies
$$
\frac{F_{\ell}}{\pr{\mathbb{E} Z_k}^2}\leq
2e^{3\ell+o(1)}\pr{1-\frac{k}{n}}^{-k}\pr{\frac{q^{\frac{\ell-3}{2}}}{np}}^{\ell}\binom{k}{\ell}\pr{\frac{3(\Delta+2)(kpq)^{\Delta}}{\Delta!}}^{\ell}.
$$
If $p>n^{-1/2+\varepsilon}$, then $\pr{1-\frac{k}{n}}^k=1-o(1)$. Moreover, due to~\eqref{eq:kp_large},
$$
q^{\frac{\ell-3}{2}} = q^{\frac{k-3}{2} \cdot \frac{\ell-3}{k-3}}\leq
\left(q^{\frac{k-3}{2}}\right)^{\ell/k}\leq \left(q^{\frac{k-3}{2}}\right)^{1/\sqrt{kp}}\leq (enp)^{1/\sqrt{kp}} =\Theta\left((np)^{1/\sqrt{kp}}\right).
$$
Therefore,
\begin{equation}
\frac{F_{\ell}}{\pr{\mathbb{E}Z_k}^2}\leq
\pr{\Theta(1) (kpq)^{\Delta}\cdot\frac{kq^{\frac{\ell-3}{2}}}{\ell np}}^{\ell}\leq \pr{\frac{\Theta(1)(kp)^{\Delta+1}(np)^{\frac{1}{\sqrt{kp}}}}{np^2}}^{\ell}
\le\pr{\frac{\Theta(1)(\ln (np))^{\Delta+1}}{p(np)^{1-\frac{\varepsilon}{2}}}}^{\ell}<n^{-\varepsilon\ell}.
\label{eq:variance_forests_small_1}
\end{equation}
If $p\le n^{-1/2+\varepsilon}$, then $\pr{1-\frac{k}{n}}^{-k}=\left(\frac{n}{n-k}\right)^k=\left(1+\frac{k}{n-k}\right)^k\leq\exp\left(\frac{k^2}{n-k}\right)$. Therefore,
\begin{equation}
\frac{F_{\ell}}{\pr{\mathbb{E}Z_k}^2}\le
e^{\frac{k^2}{n-k}}\pr{\Theta(1)(kp)^{\Delta}\cdot\frac{kq^{\frac{\ell}{2}}}{\ell np}}^{\ell}=e^{\frac{O(k^2)}{n}}\rho_{y_1,y_2,y_3}(\ell),
\label{eq:F/E_bound}
\end{equation}
where $y_1=\Theta\left(\frac{(kp)^{\Delta}k}{np}\right)$, $y_2=q^{1/2}$, $y_3=1$, and $\rho(\ell):=\rho_{y_1,y_2,y_3}(\ell)$ is defined in~\eqref{g}. 
  Then, by Claim~\ref{cl:rho},
\begin{align*}
 \rho(\ell) & \leq
 \max\br{\exp\pr{\frac{\alpha}{2}y^{\frac{1}{\alpha}}},\rho(1),\rho(\sqrt{k/p})}\\
 &\leq
 \max\br{\exp\pr{\frac{O(1)k(kp)^{\Delta}}{np}},\frac{O(1)k(kp)^{\Delta}}{np},\exp\pr{\frac{k}{\sqrt{kp}}\ln\pr{\frac{(kp)^{\Delta+1/2}q^{k/(2\sqrt{kp})}O(1)}{np}}}}.
\end{align*}
Since $kp=O(\ln (np))$, we get that the third term is less than 1 as
$$
 \frac{(kp)^{\Delta+1/2}q^{\sqrt{k/p}/2}}{np}=
  \frac{(kp)^{\Delta+1/2}e^{\sqrt{kp}/2+o(1)}}{np}\leq
  \frac{(kp)^{\Delta+1/2}e^{\sqrt{\ln(enp)}+o(1)}}{np}
$$
can be made as small as needed due to the choice of $C_{\varepsilon}$. Also, $k(kp)^{\Delta}/(np)=k(\ln (np))^{O(1)})/(np)$ and 
$k^2/n=O(\ln(np)/(np))k$. Then, we can conclude from~\eqref{eq:F/E_bound} that, for large enough $C_{\varepsilon}$,
\begin{equation}
\frac{F_{\ell}}{\pr{\mathbb{E}Z_k}^2}\leq \exp\pr{\frac{k}{5(\ln(np))^5}}.
\label{eq:variance_forests_small_2}
\end{equation}

\paragraph{Large $\ell$.} Let $\sqrt{k/p}<\ell\le k$. Here, we estimate $F_{\ell}$ in a slightly different way:

\begin{enumerate}

\item Choose two disjoint rooted forests $F^{(1)}$ and $F^{(2)}$ of sizes $k-\ell$ with maximum degree at most $\Delta$. This contributes the factor of $(\mathbb{E} Z_{k-\ell})^2$, assuming $\mathbb{E}Z_0=1$.

\item Choose a set $L$ of size $\ell$, choose an integer $m\in[\ell]$, positive integers $\ell_1,\ldots,\ell_m$ that sum up to $\ell$, and construct on the chosen set $L$ a forest of $m$ (unrooted) trees of sizes $\ell_1,\ldots,\ell_m$ with maximum degrees at most $\Delta$. Due to Theorem~\ref{lemma_tree_of_bounded_degree}, it contributes a factor of
 $$
 \binom{n}{\ell}\sum\limits_{m=1}^{\ell}\frac{1}{m!}\left(
\sum\limits_{\ell_1,\ldots,\ell_m=1}^{\infty}\binom{\ell}{\ell_1,\ldots,\ell_m}(1+o(1))^m\prod_{j=1}^m
\pr{\frac{a_{\Delta}}{e}\ell_j}^{\ell_j-2}\right)
p^{\ell-m} q^{-\binom{\ell}{2}+m}.
$$
Denote the resulting (unrooted) forest by $F^{\cap}$.

\item  For each $j\in [m]$, choose in the $j$-th tree component of the forest $F^{\cap}$ an ordered pair of roots it at most $(\ell_1\ldots\ell_m)^2$ ways.

\item For $j\in\{1,2\}$, draw edges between $L$ and $V(F^{(j)})$ so that (1) if $\ell\leq k-\ell$, then every vertex of $L$ touches at most $\Delta$ edges going to $F^{(j)}$; (2) if $\ell> k-\ell$, then every vertex of $F^{(j)}$ touches at most $\Delta$ edges going to $L$. Then, for $j\in\{1,2\}$, this step contributes a factor of 
$$
\left(\sum_{s\leq\Delta}\binom{k}{s}(pq)^s\right)^{\min\{\ell,k-\ell\}}q^{-\ell(k-\ell)}
\leq q^{-\ell(k-\ell)}\pr{(2\ln(enp))^{\Delta}\cdot\frac{1+\varepsilon}{\Delta!}}^{\min\{\ell,k-\ell\}}
$$
since $\sum_{s\leq\Delta}\binom{k}{s}(pq)^s\leq\frac{1+\varepsilon}{\Delta!}\left(kpq\right)^{\Delta}\leq
\frac{1+\varepsilon}{\Delta!}\left(\frac{2\ln(a_{\Delta}np)}{\ln q}pq\right)^{\Delta}\leq\frac{1+\varepsilon}{\Delta!}\left(2\ln(enp)\right)^{\Delta}$.

\item Clearly, for each $j\in\{1,2\}$, we have already overestimated the expected number of pairs of forests that have $\ell$ common vertices. It remains to choose roots. We do it in the same way as in the case of small $\ell$, this gives the factor of $(\Delta+2)^{2\min\{\ell,k-\ell\}}$.
\end{enumerate}

Multiplying the factors that we have just obtained, we get
\begin{align*}
\frac{F_{\ell}}{(\mathbb{E} Z_k)^2} &=
\frac{\pr{\mathbb{E} Z_{k-\ell}}^2}{\pr{\mathbb{E}Z_k }^2}\binom{n}{\ell}\sum\limits_{m=1}^{\ell}\frac{(1+o(1))^m}{m!}
\sum\limits_{\ell_1,\ldots,\ell_m=1}^{\infty}\binom{\ell}{\ell_1,\ldots,\ell_m}
\ell_1^{\ell_1}\ldots{\ell_m^{\ell_m}}\pr{\frac{pa_{\Delta}}{e}}^{\ell}\times\\
&\quad\quad\quad\times\left(2(\ln(enp))^{\Delta}\cdot\frac{1+\varepsilon}{\Delta!}\right)^{2\min\{\ell,k-\ell\}}\pr{\Delta+2}^{2\min\{\ell,k-\ell\}}q^{-\ell^2/2-2\ell(k-\ell)+\ell/2}\pr{\frac{p a^2_{\Delta}}{q e^2}}^{-m}
\end{align*}
Since 
$$
 \sum\limits_{\ell_1,\ldots,\ell_m=1}^{\infty}\binom{\ell}{\ell_1,\ldots,\ell_m}
\ell_1^{\ell_1-1}\ldots{\ell_m^{\ell_m-1}}=m!{\ell-1\choose m-1}\ell^{\ell-m}
$$
(see~\cite[Equation (3.4)]{Moon1970}), we get 
$$
 \sum\limits_{\ell_1,\ldots,\ell_m=1}^{\infty}\binom{\ell}{\ell_1,\ldots,\ell_m}
\ell_1^{\ell_1}\ldots{\ell_m^{\ell_m}}\leq m!{\ell-1\choose m-1}\ell^{\ell-m}\left(\frac{\ell}{m}\right)^m
\leq m!{\ell\choose m}\frac{\ell^{\ell}}{m^m}\leq\frac{\ell^{m+\ell}}{m^m}.
$$
Thus,
\begin{align*}
\frac{F_{\ell}}{(\mathbb{E} Z_k)^2} &\stackrel{\text{Claim~\ref{cl:forests_sparse_variance_cl2}}}{\le}
\pr{a_{\Delta} (n-k)p}^{-2\ell}e^{O(1/p)}k^3\pr{\frac{a_{\Delta}np}{\ell}}^{\ell}
\pr{(2\ln(enp))^{\Delta}}^{2\min\{\ell,k-\ell\}}
\times\\
&\quad\quad\quad\quad\quad\quad\quad\quad\quad\quad\quad
\times q^{-\ell^2/2-2\ell(k-\ell)+\ell/2+2k\ell-\ell^2-3\ell} \sum\limits_{m=1}^{\ell}\frac{\ell^{m+\ell}}{(m!)^2}
 \pr{\frac{p a^2_{\Delta}}{q e^2}}^{-m}.
\end{align*}
Observe that
$$
 \sum\limits_{m=1}^{\ell}\frac{\ell^{m+\ell}}{(m!)^2}
 \pr{\frac{p a^2_{\Delta}}{q e^2}}^{-m}\leq \ell^\ell\left(\sum\limits_{m=1}^{\infty}\frac{\ell^{m/2}}{m!}
 \pr{\sqrt{\frac{q e^2}{p a^2_{\Delta}}}}^m\right)^2=\ell^{\ell}\exp\left(2\sqrt{\frac{\ell q e^2}{pa^2_{\Delta}}}\right).
$$
 Therefore,
$$
\frac{F_{\ell}}{(\mathbb{E} Z_k)^2} \leq\left(\frac{n}{n-k}\right)^{\ell}\pr{a_{\Delta}(n-k)p}^{-\ell}e^{O(1/p)}k^{3}e^{2\sqrt{\frac{\ell q e^2}{pa^2_{\Delta}}}}
\pr{(2\ln(enp))^{2\Delta}}^{\min\{\ell,k-\ell\}} q^{\ell^2/2-5\ell/2}
$$
Since $\varepsilon>0$ is small, we get that
$$
q^{(k-3)/2}\leq a_{\Delta}np(1-\varepsilon)\leq (1-\varepsilon/2)\left(\frac{n-k}{n}\right)^2a_{\Delta}np=(1-\varepsilon/2)\left(\frac{n-k}{n}\right)a_{\Delta}(n-k)p.
$$
So,
\begin{align*}
\frac{F_{\ell}}{(\mathbb{E} Z_k)^2} &\leq k^{3}\pr{q^{\frac{\ell}{2}-5/2}q^{-\frac{k-3}{2}}(1-\varepsilon/2)\pr{(2\ln(enp))^{2\Delta}}^{\min\left\{1,\frac{k-\ell}{\ell}\right\}}\cdot e^{\frac{O(1)}{\sqrt{\ell p}}+\frac{O(1)}{\ell p}}}^{\ell}
\\
&\le k^3(1-\varepsilon)^{\ell/2}\pr{q^{-(k-\ell)/2}\pr{(2\ln(enp))^{2\Delta}}^{\big(1\wedge \frac{k-\ell}{\ell}\big)}}^{\ell},
\end{align*}
where the last inequality holds since, due to the restrictions on $\ell$, $\ell p>\sqrt{kp}$, implying
$$
\sqrt{1-\varepsilon}\cdot e^{\frac{O(1)}{\sqrt{\ell p}}+\frac{O(1)}{\ell p}}<\sqrt{1-\varepsilon/2}<1
$$
when $C_{\varepsilon}$ is large enough, according to~\eqref{eq:kp_large}. Finally, we get
\begin{align}
\frac{F_{\ell}}{(\mathbb{E} Z_k)^2}
&\le k^3(1-\varepsilon)^{\ell/2}\pr{q^{-1/2}\pr{2(\ln(enp))^{2\Delta}}^{\min\left\{\frac{1}{k-\ell},\frac{1}{\ell}\right\}}}^{\ell(k-\ell)}\notag\\
&\le k^3(1-\varepsilon)^{\ell/2}\pr{q^{-1/2}(2\ln(enp))^{4\Delta/k}}^{\ell(k-\ell)}\le k^3(1-\varepsilon)^{\sqrt{k}/2},
\label{eq:variance_forests_small_3}
\end{align}
where the last inequality follows from $q^{-1/2}(2\ln(enp))^{4\Delta/k}=\sqrt{1-p}\cdot\exp\left(\frac{O(\ln\ln np)}{\Omega(\ln n)}\right)=o(1)$.\\

From~\eqref{eq:forests_sparse_exp_large},~\eqref{eq:variance_forests_small_1},~and~\eqref{eq:variance_forests_small_3}, we conclude that, if $n^{-1/2+\varepsilon}<p<1-\varepsilon$, then
$$
\frac{\mathrm{Var}Z_k}{\pr{\mathbb{E} Z_k}^2}
\le\sum\limits_{\ell=1}^{k}\frac{F_\ell}{\pr{\mathbb{E} Z_k}^2}+\frac{1}{\mathbb{E} Z_k}
\le \sum\limits_{\ell=1}^{\infty}n^{-\varepsilon\ell}+k^4(1-\varepsilon)^{\sqrt{k}/2}+o(1)=o(1).
$$
For $C_{\varepsilon}/n\leq p\leq n^{-1/2+\varepsilon}$, from~\eqref{eq:forests_sparse_exp_large},~\eqref{eq:variance_forests_small_2},~and~\eqref{eq:variance_forests_small_3}, we get
$$
\frac{\mathrm{Var} Z_k}{\pr{\mathbb{E}Z_k}^2}
\le \sum\limits_{\ell=1}^{\left\lfloor \sqrt{k/p}\right\rfloor}\exp\pr{\frac{k}{5(\ln(np))^5}}+k^4(1-\varepsilon)^{\sqrt{k}/2}+o(1)
\le\exp\pr{\frac{O(k)}{(\ln(np))^5}},
$$
completing the proof of Lemma~\ref{lm:variance_sparse_forests}.
\end{proof}

\subsection{Proof of Theorem~\ref{th:concentration_max_ind_rooted_forests_bounded_degree}}
\label{sc:small_p_proof_Talagrand}

From Claim~\ref{claim:expectation_n_ind_forests_bounded_degree_at_k_pm_eps} and Markov's inequality, we immediately get that whp $\mathbf{F}_{\Delta}\leq k_{+\varepsilon}-1$.\\

If $p\ge n^{-1/2+\varepsilon}$, then Lemma~\ref{lm:variance_sparse_forests} and Chebyshev's inequality imply that whp $\mathbf{F}_{\Delta}\geq k_{-\varepsilon}$. It remains to prove that the same holds when $p<n^{-1/2+\varepsilon}$. We will use Talagrand's inequality (see, e.g.,~\cite[Theorem 2.29]{Janson2000}). Let us recall its statement.

Let $\Omega=\prod_{i=1}^n\Omega_i$ be endowed with the product measure. A function $X:\Omega\to\mathbb{R}$ is Lipschitz if $|X(x)-X(y)|\leq 1$ for all $x,y\in\Omega$ that are at Hamming distance at most 1. For $f:\mathbb{R}\to\mathbb{R}$, let us say that $X$ is {\it $f$-certifiable} if, for any $x\in\Omega$ and $r\in\mathbb{R}$ such that $X(x)\geq r$, there exists $I\subset[n]$ such that $|I|\leq f(r)$ and each $y\in\Omega$ coinciding with $x$ in coordinates from $I$ satisfies $X(y)\geq r$ as well.

\begin{theorem}[Talagrand]
If a random variable $X$ is Lipschitz and $f$-certifiable, then, for every $r\in\mathbb{R}$ and $t\geq 0$, $\mathbb{P}(X\leq r-t)\mathbb{P}(X\geq r)\leq e^{-\frac{t^2}{4f(r)}}.$
\label{th:Talagrand}
\end{theorem}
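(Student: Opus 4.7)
The plan is to derive this concentration statement from the geometric Talagrand convex distance inequality, which is the hard analytic ingredient. For any subset $A\subset\Omega$ I would work with the convex distance
$$
d_T(x,A):=\sup_{\alpha\in\mathbb{R}^n_{\geq 0},\,\|\alpha\|_2=1}\;\inf_{y\in A}\sum_{i:\,x_i\neq y_i}\alpha_i,
$$
and invoke the inequality $\mathbb{P}(A)\cdot\mathbb{P}(\{x:d_T(x,A)\geq s\})\leq e^{-s^2/4}$ for all $s\geq 0$. I would not attempt to reprove this: its standard derivation is a delicate induction on $n$ combining the one-dimensional case with H\"older's inequality, and the argument is purely analytic with no interaction with the Lipschitz/certifiability hypotheses.

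Given the convex distance inequality, the remaining work is a clean combinatorial reduction. Set $A=\{X\geq r\}$ and $B=\{X\leq r-t\}$; it suffices to show that $B\subset\{x:d_T(x,A)\geq t/\sqrt{f(r)}\}$, since then applying the convex distance inequality with $s=t/\sqrt{f(r)}$ gives exactly $\mathbb{P}(A)\mathbb{P}(B)\leq e^{-t^2/(4f(r))}$. Fix $x\in B$ and an arbitrary $y\in A$. By $f$-certifiability at $y$, there exists $I\subset[n]$ with $|I|\leq f(r)$ such that every $z$ agreeing with $y$ on $I$ still satisfies $X(z)\geq r$. Let $J_y=\{i:x_i\neq y_i\}$ and form $x'$ from $x$ by overwriting the coordinates in $I$ with those of $y$; then $x'$ agrees with $y$ on $I$, so $X(x')\geq r$, while the Lipschitz property gives $|X(x)-X(x')|\leq|I\cap J_y|$. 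Combined with $X(x)\leq r-t$, this forces $|I\cap J_y|\geq t$.

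Now choose the test weight vector $\alpha$ with $\alpha_i=1/\sqrt{|I|}$ for $i\in I$ and $\alpha_i=0$ otherwise; this is a unit vector and
$$
\sum_{i\in J_y}\alpha_i=\frac{|I\cap J_y|}{\sqrt{|I|}}\geq\frac{t}{\sqrt{f(r)}}.
$$
Since $y\in A$ was arbitrary, taking the infimum over $y$ and then the supremum over admissible $\alpha$ yields $d_T(x,A)\geq t/\sqrt{f(r)}$, completing the reduction. The only real obstacle in this program is the convex distance inequality itself, which I would cite from Talagrand's original paper or from Chapter~2 of Janson--\L uczak--Ruci\'nski; once it is in hand, the Lipschitz/certifiability packaging above is a short and essentially mechanical translation.
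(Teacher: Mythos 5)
The paper does not actually prove Theorem~\ref{th:Talagrand}; it cites it as a known result from~\cite[Theorem 2.29]{Janson2000}, so there is no ``paper's proof'' to compare against. Your plan --- cite the convex distance inequality as a black box and reduce the Lipschitz/certifiable formulation to it --- is indeed the standard route. However, the reduction as you wrote it has a genuine quantifier error.

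You set $A=\{X\geq r\}$ and try to show $B=\{X\leq r-t\}\subset\{x:d_T(x,A)\geq t/\sqrt{f(r)}\}$. To lower-bound $d_T(x,A)=\sup_{\alpha}\inf_{y\in A}\sum_{i:x_i\neq y_i}\alpha_i$ you must exhibit a \emph{single} unit vector $\alpha$ (allowed to depend on $x$, but not on $y$) such that $\sum_{i:x_i\neq y_i}\alpha_i\geq t/\sqrt{f(r)}$ for \emph{every} $y\in A$. But your $\alpha$ is built from the certificate $I=I_y$ of a particular $y\in A$, and a different $y'\in A$ may have its disagreement set $J_{y'}$ essentially disjoint from $I_y$, making $\sum_{i\in J_{y'}}\alpha_i$ small. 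The phrase ``taking the infimum over $y$ and then the supremum over admissible $\alpha$'' has the quantifiers in the wrong order: the $\sup_{\alpha}$ is outermost, so $\alpha$ must be fixed before the adversary picks $y$.

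The fix is to swap the roles of the two events. Take $A=\{X\leq r-t\}$ and show $\{X\geq r\}\subset\{y:d_T(y,A)\geq t/\sqrt{f(r)}\}$. For $y$ with $X(y)\geq r$, certifiability gives a set $I=I(y)$ with $|I|\leq f(r)$; set $\alpha_i=1/\sqrt{|I|}$ on $I$ and $0$ elsewhere --- this depends only on $y$, as required. Then for any $x\in A$, forming $x'$ from $x$ by overwriting $I$-coordinates with those of $y$ gives $X(x')\geq r$ and $|X(x)-X(x')|\leq|I\cap\{i:x_i\neq y_i\}|$, so this intersection has size at least $t$ and $\sum_{i:x_i\neq y_i}\alpha_i\geq t/\sqrt{|I|}\geq t/\sqrt{f(r)}$. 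Now the convex distance inequality with $s=t/\sqrt{f(r)}$ yields $\mathbb{P}(X\leq r-t)\,\mathbb{P}(X\geq r)\leq\mathbb{P}(A)\,\mathbb{P}(d_T(\cdot,A)\geq s)\leq e^{-t^2/(4f(r))}$, which is exactly the claim. Everything else in your write-up (the Lipschitz bound $|X(x)-X(x')|\leq|I\cap J|$, the choice of $\alpha$, the appeal to the geometric inequality) is correct once the sets are swapped.
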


If we let $\Omega_i=\{\{j,i\},j\in[i-1]\}$, for $i\in\{2,\ldots,n\}$, then $\mathbf{F}_{\Delta}$ is Lipschitz and $\ceil{r}$-certifiable. Indeed, for any integer $r$, there exists a certificate of size $r$ enforcing the inequality $\mathbf{F}_{\Delta}\ge r$, which is simply the set of vertices of the corresponding largest induced forest. Thus, by Theorem~\ref{th:Talagrand}, we get, for large enough $n$,
\begin{align*}
\Pb{\mathbf{F}_{\Delta}\le{k_{-\varepsilon}-1}}\Pb{\mathbf{F}_{\Delta}\ge{k_{-\varepsilon/2}}}
&\le\exp\pr{-\frac{(k_{-\varepsilon/2}-k_{-\varepsilon}+1)^2}{4k_{-\varepsilon/2}}}\leq
\exp\left(-\frac{(2\log_q\frac{1-\varepsilon/2}{1-\varepsilon})^2}{4k_{-\varepsilon/2}}\right)\\
&\le\exp\pr{-\frac{\varepsilon^2}{4k_{-\varepsilon/2}p^2}}\le\exp\pr{-\frac{\varepsilon^{2}k_{-\varepsilon/2}}{40(\ln(np))^2}}.
\end{align*}
Using Lemma~\ref{lm:variance_sparse_forests} with $\varepsilon$ replaced by $\varepsilon/2$ and the Paley--Zygmund inequality, we get
$$
\Pb{\mathbf{F}_{\Delta}\ge{k_{-\varepsilon/2}}}=\Pb{Z_{k_{-\varepsilon/2}}>0}\ge\frac{\left(\mathbb{E} Z_{k_{-\varepsilon/2}}\right)^2}{\mathbb{E}\left(Z_{k_{-\varepsilon/2}}^2\right)}=
\frac{\left(\mathbb{E} Z_{k_{-\varepsilon/2}}\right)^2}{\mathrm{Var} Z_{k_{-\varepsilon/2}}+\left(\mathbb{E} Z_{k_{-\varepsilon/2}}\right)^2}
\ge\exp\pr{-\frac{O(k_{-\varepsilon/2})}{(\ln(np))^5}}
$$
and, finally, recalling that $p<n^{-1/2+\varepsilon}$,
$$
\Pb{\mathbf{F}_{\Delta}\le{k_{-\varepsilon}-1}}
\le\exp\pr{\frac{O(k_{-\varepsilon/2})}{(\ln(np))^5}-\frac{\varepsilon^{2}k_{-\varepsilon/2}}{40(\ln(np))^2}}
<\exp\pr{-\frac{n^{1/2-\varepsilon}}{(\ln(np))^2}}=o(1),
$$
completing the proof.


\section{Acknowledgements}

The research work of V. Kozhevnikov is supported by Grant  NSh-775.2022.1.1 for Leading Scientific Schools.  

\end{document}